\numberwithin{equation}{section}
\newcommand{\sode}{\textsc{sode}\xspace}
\let\bs\boldsymbol
\newcommand{\cala}{\mathcal{A}}
\newcommand{\calb}{\mathcal{B}}
\newcommand{\cald}{\mathcal{D}}
\newcommand{\calf}{\mathcal{F}}
\newcommand{\call}{\mathcal{L}}
\newcommand{\calt}{\CMcal{T}}
\newcommand{\calu}{\mathcal{U}}
\newcommand{\calv}{\mathcal{U}}
\newcommand{\Real}{\mathbb{R}}
\newcommand{\set}[2]{\left\{\,#1\left.\vphantom{#1#2}\,\right\vert\,#2\,\right\}}
\newcommand{\map}[3]{#1\colon#2\rightarrow#3}
\newcommand{\id}{{\operatorname{id}}}
\newcommand{\pr}{\operatorname{pr}}
\newcommand{\pai}[2]{\langle{#1},{#2}\rangle} 
\renewcommand{\Im}{\operatorname{Im}}
\newcommand{\Ker}{\operatorname{Ker}}
\newcommand{\cinfty}[1]{C^{\scriptscriptstyle\infty}(#1)}   
\renewcommand{\sec}[1]{\operatorname{Sec}(#1)} 
\newcommand{\vectorfields}[2][]{\mathfrak{X}_{#1}(#2)}
\newcommand{\pd}[2]{\frac{\partial#1}{\partial#2}} 
\newcommand{\at}[1]{\Big\vert_{#1}}
\newcommand{\cansec}{\mathbb{I}}
\def\seq 0->#1-#2->#3-#4->#5->0{0\longrightarrow
            #1\maparrow{#2}#3\maparrow{#4}#5\longrightarrow 0}
\newcommand{\maparrow}[1]{\mathrel{\mathop{\longrightarrow}\limits^{#1}}}
\newcommand{\Hor}{\operatorname{Hor}}
\newcommand{\Ver}{\operatorname{Ver}}
\def\curv{\mathrm{Curv}}
\def\Rie{\mathrm{Rie}}
\def\V{{\scriptscriptstyle\mathsf{V}}}
\def\H{{\scriptscriptstyle\mathsf{H}}}
\newcommand{\hlift}{\xi^\H}
\newcommand{\vlift}{\xi^\V}
\newcommand{\Ti}{\mathbb{T}}
\newtheoremstyle{slanted}{3pt}{3pt}{\slshape}{}{\scshape}{:}{ }{}
\newtheoremstyle{nonslanted}{3pt}{3pt}{\upshape}{}{\scshape}{:}{ }{}
\theoremstyle{slanted} 
  \newtheorem{guia}{}[section]
  \newtheorem{definition}[guia]{Definition}
  \newtheorem{theorem}[guia]{Theorem}
  \newtheorem{proposition}[guia]{Proposition}
\theoremstyle{nonslanted} 
  \newtheorem{remarkth}[guia]{Remark}
\newenvironment{remark}
  {\begin{remarkth}}
  {\hfill$\diamond$\end{remarkth}}
\renewenvironment{proof}
  {\par\normalfont
   \topsep 6pt plus 6pt
   \noindent{\scshape Proof.}\space\ignorespaces}
  {\qed\endtrivlist\medskip}
\begin{document}

\title[Linearization of a nonlinear connection]{Linearization of nonlinear connections\\ on vector and affine bundles,\\ and some applications}

\author[E.\ Mart\'{\i}nez]{Eduardo Mart\'{\i}nez}
\address{Eduardo Mart\'{\i}nez:
IUMA and Departamento de Matem\'atica Aplicada,
Universidad de Zaragoza,
Pedro Cerbuna 12,
50009 Zaragoza, Spain}
\email{emf@unizar.es}
\thanks{I would like to thank to the anonymous referees for a careful reading of this manuscript and for their comments and suggestions. Partial financial support from MINECO (Spain) grant MTM2015-64166-C2-1-P and from Gobierno de Arag\'on (Spain) grant DGA-E24/1 is acknowledged}

\keywords{Nonlinear connection, linear connection, connections on vector and affine bundles, second-order differential equations, Hamilton-Jacobi equation}


\begin{abstract} 
A linear connection is associated to a nonlinear connection on a vector bundle by a linearization procedure. Our definition is intrinsic in terms of vector fields on the bundle. For a connection on an affine bundle our procedure can be applied after homogenization and restriction. Several applications in Classical Mechanics are provided.   
\end{abstract}

\maketitle

\section{Introduction}
\label{introduction}

Connection theory is an important subject in Differential Geometry with many relevant applications in different areas of Physics~\cite{connections.Sarda}. In Classical Mechanics, linear and nonlinear connections appear as a valuable tool in the study of several problems, as reduction by symmetry groups~\cite{connections.Marsden}, some special non-holonomic systems~\cite{Marsden.nonholonomic.horizontal, Cardin.Favretti} or the geometry of second-order differential equations and the Inverse Problem~\cite{IP.nonautonomous}, to mention a few. In particular, the geometry of second-order differential equations (\sode) is dominated by the existence of a nonlinear connection canonically associated to such differential equation on the tangent bundle for the autonomous case~\cite{Crampin.nonlinear.connection} and on a first jet bundle for the non-autonomous case~\cite{Crampin.connection.non-autonomous}.

A second step in this direction was the introduction of a linear connection associated to a nonlinear one by a kind of linearization procedure~\cite{linearizable.SODE}. It was successfully used in the problem of the existence and construction of coordinates in which a \sode is linear in velocities or linear in all coordinates. In fact, this connection was implicit in our previous work on derivations of forms~\cite{deriv, deriv2} and it was the key in the solution of the problem of decoupling a \sode~\cite{decoupling}. Later the linearization of the connection was extended to the non-autonomous case~\cite{Bianchi}, where the main difficulty was caused by the fact that the first jet bundle in not a vector bundle but an affine bundle. 

To our knowledge, the linearization of a nonlinear connection on a vector bundle appears for the first time in~\cite{Vilms}. Vilms refers to such connection as the Berwald connection, since it was apparently used by Berwald in Finsler Geometry, where the tangent bundle is the relevant bundle. However, the definition provided by Vilms is given in local coordinates, what makes unclear its meaning, and more importantly it makes difficult to extract relevant information about the original connection. In this direction our definition in~\cite{linearizable.SODE,tesis} was given intrinsically in terms of brackets of vector fields on the tangent bundle. In this paper we will extend our definition to cover the case of a nonlinear connection on a general vector bundle.

\medskip 

The aim of this paper is twofold. On one hand we will provide an intrinsic definition for the linearization of a nonlinear connection on a general vector bundle, not necessarily a tangent bundle. We will provide a simple expression for the covariant derivative in terms of vector fields on the total space of the bundle. This generalization allows us to extend some results known to be true in the case of the tangent bundle in order to study properties of nonlinear connections defined on other vector bundles. The case of a nonlinear connection defined on a cotangent bundle (the natural space to study Hamiltonian Mechanics) will be considered in some detail. A simple application will show how our theory can be used to determine interesting properties of completely integrable systems, and this is just an indication of the kind of results that future research on this field will produce. 

On the other hand, we will show how to linearize a nonlinear connection defined on a general affine bundle. Our approach consists in applying the same linearization construction to a homogeneous connection defined naturally (on an appropriate vector bundle) in terms of the original nonlinear connection. This warranties that the properties of the linearized connection holds true also in the case of affine bundles. The resulting linearized connection is an affine connection (in the terminology of~\cite{con.affine.MeSaMa,con.affine.MeSa}, not to be confused with an affine connection on a manifold) and generalizes to arbitrary affine bundles the linear connection that we obtained in the case of the first jet bundle in~\cite{Bianchi,con.affine.MeSaMa}. In particular, it will be shown that the choice of the connection that we did in~\cite{Bianchi} is the natural one.

In the light of our new results, we will revisit some of the applications mentioned above about geometric properties of second-order (autonomous and non-autonomous) differential equations. On the other hand, we will give simple and shorter proofs of the most relevant results. We will consider in detail the case of a nonlinear connection on the cotangent bundle. Its main properties and the relation to the canonical symplectic structure will be established. As an application we will show how our theory can be used to extract important information in the theory of completely integrable systems, where the nonlinear connection is defined by the completely integrable structure itself. 

\medskip
\paragraph{Organization of the paper}
In Section~\ref{preliminaries} we establish the necessary preliminaries on vector bundles and Ehresmann connections  that we will be using in the rest of the paper. In Section~\ref{linearization} the linearization of a nonlinear connection on a vector bundle will be defined, and we will establish its main properties (parallel transport, curvature and homogeneity). In section~\ref{linearization.affine} we will extend the linearization procedure to the case of nonlinear connections defined on affine bundles. Examples and applications will be given in Section~\ref{examples.applications} where the emphasis is in new proofs of our results in previous papers on the geometry of second-order differential equations. In Section~\ref{cotangent} we will study the peculiarities of nonlinear connections on cotangent bundles mainly related with the canonical symplectic structure. The paper ends with some conclusions and an outlook of future research.

\section{Preliminaries}
\label{preliminaries}


All objects in this paper are assumed in the $C^\infty$ category. The tangent bundle to a manifold $M$ will be denoted $\map{{\tau}_M}{TM}{M}$, and the cotangent bundle by $\map{{\pi}_M}{T^*M}{M}$. The $\cinfty{M}$-module of vector fields on $M$ will be denoted $\vectorfields{M}$. The Lie derivative with respect to a vector field $X\in \vectorfields{M}$ will be denoted by $\call_X$. The set of sections of a fiber bundle $\map{\nu}{B}{M}$ will be denoted by $\sec{B}$.

Given a map $\map{f}{N}{M}$ and a fiber bundle $\map{\nu}{B}{M}$, the pullback bundle $\map{\pr_1}{f^*B}{N}$ has total space $f^*B=\set{(n,b)\in N\times B}{\nu(b)=f(n)}$ and projection $\pr_1(n,b)=n$. A section of $\map{\pr_1}{f^*B}{N}$ can be canonically identified with a section of $B$ along $f$, that is, a map $\map{\sigma}{N}{B}$ such that $\nu\circ\sigma=f$. For the details see~\cite{Poor, sections.along.maps}. In what follows in this paper we will assume such identification, i.e. whenever we talk about a section of $f^*B$ we will think of it as a map from $N$ to $B$ as above, or in other words, the module of sections of the pullback bundle will be identified with
\begin{equation}
\label{identification.sections.along}
\sec{f^*B}\equiv\set{\map{\sigma}{N}{B}}{\nu\circ\sigma=f}.
\end{equation}
A section $\sigma\in\sec{f^*B}$ is said to be basic if there exists a section $\alpha\in\sec{B}$ such that $\sigma=\alpha\circ f$. We will frequently identify both sections, i.e. $\alpha$ indicates both the section of $B$ and the section of $f^*B$, and the context will make clear the meaning. When $f=\nu$, the pullback bundle $\map{\pr_1}{\nu^*B}{B}$ carries a canonical section $\cansec$ given by the identity map in the manifold $B$, that is $\cansec(b)=b$.

We will be mainly concerned with the case of a vector bundle $\map{\pi}{E}{M}$. The pullback $\map{\pr_1}{f^*E}{N}$ is also a vector bundle. In most cases of interest to us the map $f$ is just the projection $\pi$. By a (regular) distribution along $\pi$ we mean a subbundle of $\pi^*E$. A distribution $\cald$ along $\pi$ is said to be basic if there exists a subbundle $F\subset E$ such that $\cald=\pi^*F$. Equivalently, a distribution is basic if it is generated by basic sections.

Local coordinates $(x^i)$ defined on some open set $\calu$ of the base $M$ and a local basis $\{e_A\}$ of sections of $E|_\calu$ determine local coordinates $(x^i,u^A)$ on $E$ as well as local coordinates $(x^i,u^A,w^A)$ on $\pi^*E$. The bundle projection $\map{\pr_1}{\pi^*E}{E}$ reads $\pr_1(x^i,u^A,w^A)=(x^i,u^A)$. The canonical section has the local expression $\cansec=u^Ae_A$.

The vertical subbundle $\map{{\tau}_E^\V\equiv{\tau}_E\big|_{\Ver{E}}}{\Ver{E}=\Ker(T\pi)}{E}$ can be canonically identified with the pullback bundle $\map{\pr_1}{\pi^*E}{E}$ by means of the vertical lifting map $\map{\xi^\V}{\pi^*E}{TE}$, defined by $\xi^\V(a,b)=\frac{d}{ds}(a+sb)|_{s=0}$ for every $(a,b)\in\pi^*E$. For a section $\sigma\in\sec{\pi^*E}$ the vertical lift of $\sigma$ is the vector field $\sigma^\V\in\vectorfields{E}$ given by ${\sigma}^\V=\xi^\V\circ\sigma$. The vertical lift of the canonical section is the Liouville vector field $\Delta=\cansec^\V$, the infinitesimal generator of dilations along the fibers. 

Let $\map{{\nu}}{F}{N}$ be another vector bundle. Let $\map{\phi}{E}{F}$ be a (generally nonlinear) map fibered over $\map{\varphi}{M}{N}$. The fibre derivative of $\phi$ is the vector bundle map $\map{\calf\phi}{\pi^*E}{\nu^*F}$ over $\phi$ defined by the equation $\xi^\V\circ\calf\phi=T\phi\circ\xi^\V$. In local coordinates as above, if the local expression of $\phi$ is $\phi(x^i,u^A)=\bigl(\varphi^a(x^i),\phi^\alpha(x^i,u^A)\bigr)$, then the local expression of $\calf\phi$ is  
\[
\calf\phi(w^B e_B)=w^B\pd{\phi^\alpha}{u^B}(x^i,u^A)\,e_\alpha.
\]

If $N=M$ and $\map{\phi}{E}{F}$ is a vector bundle map over the identity in $M$, for a section $\sigma$ of $E$ along a map $f$ the section of $F$ along $f$ defined by composition $n\mapsto \phi(\sigma(n))$ will be denoted by $\phi(\sigma)$.

\subsection*{Connections}

For a vector bundle $\map{\pi}{E}{M}$ there is a short exact sequence of vector bundles over $E$
\begin{equation}
\label{fundamental.sequence}
\seq 0-> \pi^*E -\xi^\V-> TE -p_E-> \pi^*TM ->0,
\end{equation}
where $\map{p_E}{TE}{\pi^*(TM)}$ is the projection $p_E(V)=(\tau_E(V),T\pi(V))$. This sequence is known as the fundamental sequence of $\map{\pi}{E}{M}$.

A connection on the vector bundle $E$ is a (right) splitting of the fundamental sequence, that is, a map $\map{\xi^\H}{\pi^*TM}{TE}$ such that $p_E\circ\xi^\H=\id_{\pi^*(TM)}$. Equivalently, a connection is given by the associated left splitting of that sequence, that is, the map $\map{\kappa}{TE}{\pi^*E}$ such that $\kappa\circ\xi^\V=\id_{\pi^*E}$ and $\kappa\circ\xi^\H=0$. The map $\xi^\H$ is said to be the horizontal lifting and the map $\kappa$ is usually known as the connector map (also Dombrowski connection map). See~\cite{Poor} for the general theory of connections on vector bundles.

A connection decomposes $TE$ as a direct sum $TE=\Hor\oplus\Ver E$, where $\Hor=\Im(\xi^\H)=\Ker(\kappa)$ is said to be the horizontal subbundle. The projectors over the horizontal and vertical subbundles are given by $P_\H=\xi^\H\circ p_E$ and $P_\V=\xi^\V\circ\kappa$. For a vector field $X$ along $\pi$, i.e. a section $X\in\sec{\pi^*TM}$, the horizontal lift of $X$ is the vector field $X^\H\in\vectorfields{E}$ given by $X^\H=\xi^\H\circ X$. The curvature of the connection is the $E$-valued 2-form $\map{R}{\pi^*(TM\wedge TM)}{\pi^*E}$ defined by
\begin{equation}
R(X,Y)={\kappa}([X^\H,Y^\H]),\qquad\qquad X,Y\in\sec{\pi^*TM}.
\end{equation}

In a local coordinate system as above, a local basis for $\Hor$ is given by the local vector fields $H_i=(\partial/\partial x^i)^\H$. This vector fields have the expression
\begin{equation}
H_i=\pd{}{x^i}-{\Gamma}^A_i(x,u)\pd{}{u^A},
\end{equation}
for some smooth functions ${\Gamma}^A_i\in \cinfty{E}$, known as the connection coefficients in the given coordinate system. To complete a local basis of vector fields on $E$ we will use the vector fields $V_A=(e_A)^\V=\partial/\partial u^A$.  The components of the curvature are
\begin{equation}
R_{ij}^A=\left\langle e^A\,,\, R\Bigl(\pd{}{x^i},\pd{}{x^j}\Bigr)\right\rangle=H_j(\Gamma^A_i)-H_i(\Gamma^A_j),
\end{equation}
where $\{e^A\}$ is the dual basis of $\{e_A\}$.

For ${\lambda}\in\Real$ let $\map{\mu_\lambda}{E}{E}$ be the multiplication map, $\mu_\lambda(a)=\lambda a$. A connection $\xi^\H$ is said to be linear if $\xi^\H(\lambda a,v)=T\mu_\lambda(\xi^\H(a,v))$ for every $\lambda\in\Real$ and every $(a,v)\in\pi^*TM$. In this case the connection coefficients are linear functions in the fibre coordinates $\Gamma^A_i(x,u)=\Gamma^A_{iB}(x)u^B$. If the connection is not smooth on the zero section then the above condition warranties only homogeneity of the connection. For a homogeneous connection the connection coefficients are homogeneous functions with degree 1 on the fibre coordinates.

A linear connection on $\map{\pi}{E}{M}$ can be equivalently defined by a covariant derivative operator, that is an $\Real$-bilinear map $\map{D}{\vectorfields{M}\times\sec{E}}{\sec{E}}$, $(X,\sigma)\mapsto D_X\sigma$,  such that $D_{fX}\sigma=fD_X\sigma$ and $D_X(f\sigma)=X(f)\sigma+fD_X\sigma$ for every $f\in\cinfty{M}$, $X\in\vectorfields{M}$ and $\sigma\in\sec{E}$. In terms of the connection map the covariant derivative of a section $\sigma\in \sec{E}$ is given by $D_X{\sigma}={\kappa}(T{\sigma}(X))$. The connections coefficients $\Gamma^A_i(x,u)=\Gamma^A_{iB}(x)u^B$ can be recovered from the covariant derivative of the elements of the basis $\{e_A\}$ by means of $D_{\partial/\partial x^i}e_B=\Gamma^A_{iB}(x)e_A$. For a fixed ${\sigma}\in \sec{E}$ and every $m\in M$, the value of $D_X{\sigma}$ at $m$ depends only on the value of $X$ at $m$. Thus, for $v\in T_mM$ we can write $D_v{\sigma}=(D_X{\sigma})(m)$ for any $X\in \vectorfields{M}$ such that $X(m)=v$.

A map $\map{f}{N}{M}$ defines a connection on $f^*E$ by pullback. When the connection is linear the associated covariant derivative ${}^f\!D$ is determined by the relation ${}^f\!D_v(\sigma\circ f)=D_{Tf(v)}\sigma$ for every $v\in TN$ and every $\sigma\in\sec{E}$. In general it is not necessary to make any notational distinction between $D$ and ${}^f\!D$. 

For a curve $\map{\gamma}{[t_0,t_1]}{M}$ a section $\sigma$ of $E$ along $\gamma$ is parallel if, as a curve, it is horizontal, $\dot{\sigma}(t)\in\Hor_{\sigma(t)}$ for all $t\in[t_0,t_1]$. Equivalently $\sigma$ is parallel if $D_{\dot{\gamma}(t)}\sigma=0$. For a vector $a\in E_{\gamma(t_0)}$, if $\sigma(t)$ is the solution of the initial value problem $D_{\dot{\gamma}(t)}\sigma=0$, $\sigma(t_0)=a$, then the parallel transport of $a$ along $\gamma$ is the vector $\sigma(t_1)\in E_{\gamma(t_1)}$. If the curve $\gamma$ is an integral curve of a vector field $X\in\vectorfields{M}$ then the parallel section $\sigma$ with initial value $a$ is $\sigma(t)=\phi_t(a)$, where $\phi_t$ is the flow of the horizontal lift $X^\H\in\vectorfields{E}$ of $X$.

A linear connection on the tangent bundle $\map{\tau_M}{TM}{M}$ to a manifold $M$ is said to be a linear connection on the manifold $M$. 

\begin{remark}
In many occasions one has to consider connections which are smooth except on a submanifold $C\subset E$ which contains the zero section. A frequent case is that of homogeneous connections, i.e. connections that satisfy the property $\xi^\H(\lambda p,v)=T\mu_\lambda(\xi^\H(p,v))$ for all $\lambda\in\Real-\{0\}$ and all $(p,v)\in \pi^*E$, with $p\not\in C$. In such cases what we have is a horizontal distribution on $TE|_{E-C}\to E-C$. This possibility  will be assumed but it will not be indicated explicitly in the notation. 
\end{remark}



\section{Linearization of a non-linear connection on a vector bundle}
\label{linearization}
A nonlinear connection on a vector bundle $\map{\pi}{E}{M}$ defines a linear connection on the pullback bundle $\map{\pr_1}{\pi^*E}{E}$ which will be called the linearization of the nonlinear connection. We will define such linear connection by means of the associated covariant derivative.

\begin{theorem}
\label{covariant.derivative}
The map $\map{D}{\vectorfields{E}\times\sec{{\pi}^*E}}{\sec{{\pi}^*E}}$, given by 
\begin{equation}
\label{linearization.general}
D_U{\sigma}={\kappa}\Bigl([P_\H(U),{\sigma}^\V]+T{\sigma}(P_\V(U))\Bigr)
\end{equation}
is a covariant derivative on the vector bundle $\map{\pr_1}{{\pi}^*E}{E}$.
\end{theorem}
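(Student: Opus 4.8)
The plan is to verify directly that the map $D$ defined in \eqref{linearization.general} satisfies the three defining axioms of a covariant derivative on the vector bundle $\map{\pr_1}{\pi^*E}{E}$: additivity in both arguments (which is immediate from linearity of $\kappa$, $P_\H$, $P_\V$, the Lie bracket, the tangent map $T\sigma$, and the vertical lift $\sigma\mapsto\sigma^\V$, so it needs only a brief remark), $\cinfty{E}$-linearity in the vector field slot, $D_{fU}\sigma = f\,D_U\sigma$, and the Leibniz rule in the section slot, $D_U(g\sigma) = U(g)\sigma + g\,D_U\sigma$ for $f,g\in\cinfty{E}$, $U\in\vectorfields{E}$ and $\sigma\in\sec{\pi^*E}$.

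For $\cinfty{E}$-linearity in $U$, first I would note that $P_\H$ and $P_\V$ are $\cinfty{E}$-linear bundle projectors, so $P_\H(fU) = fP_\H(U)$ and $P_\V(fU) = fP_\V(U)$. The term $T\sigma(P_\V(U))$ is then clearly $\cinfty{E}$-linear in $U$. For the bracket term I would expand $[fP_\H(U),\sigma^\V]$ using the derivation property of the Lie bracket: it equals $f[P_\H(U),\sigma^\V] - (\sigma^\V f)\,P_\H(U)$. The correction term $(\sigma^\V f)\,P_\H(U)$ is purely horizontal, hence lies in $\Ker\kappa$, so it is killed by $\kappa$; this is the crucial cancellation. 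Therefore $D_{fU}\sigma = \kappa(f[P_\H(U),\sigma^\V] + fT\sigma(P_\V(U))) = f\,D_U\sigma$.

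For the Leibniz rule I would split according to the horizontal and vertical parts of $U$. On the horizontal part, $\kappa([P_\H(U),(g\sigma)^\V])$: since $(g\sigma)^\V = g\,\sigma^\V$ where $g$ here denotes the function on $E$ (vertical lifts are $\cinfty{E}$-linear in the section), the bracket expands to $g[P_\H(U),\sigma^\V] + (P_\H(U)g)\,\sigma^\V$, and applying $\kappa$ (using $\kappa\circ\xi^\V = \id$) gives $g\,\kappa([P_\H(U),\sigma^\V]) + (P_\H(U)g)\,\sigma$. On the vertical part, $\kappa(T(g\sigma)(P_\V(U)))$: writing $v = P_\V(U) = \xi^\V(\kappa(U))$, one has $T(g\sigma)(v) = (vg)\,\sigma^\V(\ldots) + g\,T\sigma(v)$ at the level of tangent vectors to $E$ along $\sigma$ — more precisely, differentiating the product $g\sigma$ along a curve realizing $v$ — so $\kappa(T(g\sigma)(v)) = (vg)\,\sigma + g\,\kappa(T\sigma(v))$. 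Adding the two contributions, the derivative terms combine to $(P_\H(U)g + P_\V(U)g)\,\sigma = (Ug)\,\sigma$ since $P_\H + P_\V = \id_{TE}$, and the remaining terms assemble into $g\,D_U\sigma$.

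I expect the main obstacle to be bookkeeping in the Leibniz computation for the vertical part: making precise in what sense $T(g\sigma)(v)$ splits as $(vg)\,\sigma + g\,T\sigma(v)$ when $T\sigma(v)$ is a tangent vector to $E$ (not obviously decomposable) and $\kappa$ is then applied to it. The clean way is to choose a curve $c(s)$ in $E$ with $\dot c(0)=v$, write $\sigma(c(s))$ as a curve in $E$, note $(g\sigma)(c(s)) = g(c(s))\,\sigma(c(s))$ is a product of a scalar curve and a curve in a fixed fiber direction only when one works fiberwise — so instead I would argue via local coordinates $(x^i,u^A,w^A)$ on $\pi^*E$, where $\sigma = \sigma^A(x,u)e_A$, $(g\sigma)^\V = g\sigma^A V_A$, and $\kappa$ sends $\partial/\partial x^i\mapsto \Gamma^A_i e_A$, $\partial/\partial u^A\mapsto 0$, $V_A\mapsto e_A$; then all three axioms reduce to the product rule for $\partial/\partial x^i$ and $\partial/\partial u^A$ acting on $g\sigma^A$, which is routine. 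I would present the intrinsic argument for linearity and additivity and fall back on the coordinate check only where it streamlines the Leibniz verification.
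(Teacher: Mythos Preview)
Your proposal is correct and follows essentially the same route as the paper's own proof: verify $\Real$-bilinearity by inspection, then check $\cinfty{E}$-linearity in $U$ by expanding the bracket $[fP_\H(U),\sigma^\V]$ and killing the horizontal correction with $\kappa\circ P_\H=0$, and finally check the Leibniz rule by expanding both the bracket term and the $T(f\sigma)$ term and recombining $P_\H(U)(g)+P_\V(U)(g)=U(g)$.

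The one point where you hesitate---the splitting $T(g\sigma)(V)=V(g)\,\sigma^\V+g\,T\sigma(V)$ for vertical $V$---is exactly the identity the paper invokes directly. You need not retreat to coordinates: since $\pi\circ\sigma=\pi$, for vertical $V$ both $T\sigma(V)$ and $T(g\sigma)(V)$ are vertical, and the canonical identification $\Ver_aE\cong E_{\pi(a)}$ (i.e.\ applying $\kappa$) turns the computation into the ordinary product rule for the curve $s\mapsto g(c(s))\,\sigma(c(s))$ in the fixed vector space $E_{\pi(p)}$. With that remark your intrinsic argument goes through cleanly, matching the paper's.
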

\begin{proof}
It is clear that $D_U\sigma$ is a well-defined section of $\pi^*E$ and that it is ${\Real}$-linear in both arguments, so  we just need to prove that it is $\cinfty{E}$-linear on the first argument and a derivation in the second. For any function $f\in\cinfty{E}$ we have
\begin{align*}
D_{fU}\sigma
&={\kappa}\Bigl([fP_\H(U),{\sigma}^\V]+fT{\sigma}(P_\V(U))\Bigr)\\
&={\kappa}\Bigl(-\sigma^\V(f)P_H(U)+f[P_\H(U),{\sigma}^\V]+fT{\sigma}(P_\V(U))\Bigr)\\
&=f{\kappa}\Bigl([P_\H(U),{\sigma}^\V]+T{\sigma}(P_\V(U))\Bigr)\\
&=fD_U\sigma,
\end{align*}
where we have used ${\kappa}\circ P_H=0$. On the other hand,
\begin{align*}
D_{U}(f\sigma)
&={\kappa}\Bigl([P_\H(U),f{\sigma}^\V]+T(f{\sigma})(P_\V(U))\Bigr)\\
&={\kappa}\Bigl(P_\H(U)(f)\sigma^\V+ f[P_\H(U),{\sigma}^\V]+fT{\sigma}(P_\V(U)) +P_\V(U)(f)\sigma^\V\Bigr)\\
&=\bigl(P_\H(U)(f)+P_\V(U)(f)\bigr){\kappa}(\sigma^\V)+f{\kappa}\Bigl([P_\H(U),{\sigma}^\V]+T{\sigma}(P_\V(U))\Bigr)\\
&=U(f)\sigma+fD_U\sigma,
\end{align*}
where we have used $T(f\sigma)(V)=V(f)\sigma^\V+fT\sigma(V)$, and ${\kappa}(\sigma^\V)=\sigma$.
\end{proof}

Since any vector field $U\in\vectorfields{E}$ can be written in the form  $U=X^\H+\eta^\V$ for $X\in\sec{\pi^*TM}$ and $\eta\in\sec{\pi^*E}$ the linear connection $D$ is determined by 
\begin{equation}
\label{basic.covariant.derivative.rules}
D_{X^\H}\sigma={\kappa}([X^\H,\sigma^\V])
\qquad\text{and}\qquad
D_{\eta^\V}\sigma={\kappa}(T\sigma(\eta^\V)).
\end{equation}

The horizontal distribution associated to the covariant derivative $D$ will be denoted $\overline{\Hor}$, and accordingly the horizontal lifting map by $\xi^{\bar{\H}}$.

\begin{remark}
When the connection $\Hor$ is linear with covariant derivative $\nabla$ then the covariant derivative obtained by linearization of $\Hor$ is just the pullback of $\nabla$ by $\pi$, as it is indicated by~\eqref{basic.covariant.derivative.rules}. 
\end{remark}


To finish this subsection, we notice that, as any linear connection, the connection $D$ on $\map{\pr_1}{\pi^*E}{E}$ has an associated dual linear connection on the dual vector bundle $\map{\pr_1}{\pi^*E^*}{E}$.  

\subsection*{Connection coefficients}

Consider local coordinates $(x^i)$ on $M$ and a local basis $\{e^A\}$ of sections of $E$. Let $(x^i,u^A)$ be the induced local coordinate system on $E$ and $(x^i,u^A,w^A)$ the induced local coordinate system on $\pi^*E$. The covariant derivative of the elements of the basis $\{e_A\}$ is given by 
\begin{equation}
\label{linear.connection.coefficients}
\begin{aligned}
D_{H_i}e_A&=\pd{{\Gamma}^B_i}{u^A}(x,u)e_B,\\[5pt]
D_{V_B}e_A&=0.
\end{aligned}
\end{equation}
As we see, the connection coefficients of the linear connection are the linearization of the nonlinear connection coefficients along the directions of the fibre. 

\smallskip

\begin{remark}
In what follows in this paper we will use the notation 
\begin{equation}
{\Gamma}^A_{iB}=\partial {\Gamma}^A_i/\partial u^B.
\end{equation}
\end{remark}
\smallskip

For a section ${\sigma}={\sigma}^A(x,u)e_A$ and a vector field $U=U^iH_i+U^AV_A$ we have
\begin{equation}
D_U{\sigma}=\bigl\{U^i[H_i({\sigma}^A)+{\Gamma}^A_{iB}{\sigma}^B]+ U^BV_B({\sigma}^A)\bigr\}e_A.
\end{equation}
The horizontal distribution $\overline{\Hor}\subset T(\pi^*E)$ associated to the linear connection is locally spanned by the horizontal lift $\{\bar{H}_i,\bar{H}_A\}$ of the coordinate vectorfields $\{\partial/\partial x^i, \partial/\partial u^A\}$,
\begin{equation}
\begin{aligned}
\bar{H}_i&=\left(\pd{}{x^i}\right)^{\bar{\H}}=\pd{}{x^i}-{\Gamma}^A_{iB}(x,u)w^B\pd{}{w^A},\\
\bar{H}_A&=\left(\pd{}{u^A}\right)^{\bar{\H}}=\pd{}{u^A}.
\end{aligned}
\end{equation}
The annihilator of $\overline{\Hor}$ is generated by the 1-forms $dw^A+{\Gamma}^A_{iB}(x,u)dx^i$.

\subsection*{Homogeneity}
The covariant derivative of the canonical section carries important information about the homogeneity of the connection. For a vertical vector field $\eta^\V$ we have 
\begin{equation}
D_{\eta^\V}\cansec=\eta.
\end{equation}
Indeed, $
D_{\eta^\V}\cansec=\kappa(T\cansec(\eta^\V))=\kappa(\eta^\V)=\eta$,
where we have taken into account that, as a section along $\pi$, the canonical section is the identity map in $E$.

For a horizontal vector field $X^\H$ we have
\[
D_{X^\H}\cansec=\kappa([X^\H,\cansec^\V])=\kappa([X^H,\Delta])=-\kappa(\call_\Delta X^\H).
\]
Therefore $D_{X^\H}\cansec$ measures the homogeneity of the nonlinear connection. When the connection is smooth everywhere it is well known that homogeneity is equivalent to linearity. Therefore we have the following result.

\begin{proposition}
\label{homogeneous}
The connection $\Hor$ is homogeneous if and only if $D_{X^\H}\cansec=0$. If the connection is smooth on the zero section, then it is linear if and only if $D_{X^\H}\cansec=0$. 
\end{proposition}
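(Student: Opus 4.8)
The plan is to build on the identity already established immediately above the statement, namely $D_{X^\H}\cansec=-\kappa(\call_\Delta X^\H)$, where $\Delta=\cansec^\V$ is the Liouville field. First I would pass to a local horizontal frame: since $D$ is a covariant derivative it is $\cinfty{E}$-linear in its first argument, and $(fX)^\H=fX^\H$, so $D_{X^\H}\cansec$ vanishes for every $X\in\sec{\pi^*TM}$ if and only if it vanishes on the coordinate fields $H_i=(\partial/\partial x^i)^\H$. Substituting $\sigma=\cansec$ (so $\sigma^A=u^A$) and $U=H_i$ into the local expression $D_U\sigma=\bigl\{U^i[H_i(\sigma^A)+{\Gamma}^A_{iB}\sigma^B]+U^BV_B(\sigma^A)\bigr\}e_A$ and using $H_i(u^A)=-{\Gamma}^A_i$ yields
\[
D_{H_i}\cansec=\Bigl(u^B\,\pd{{\Gamma}^A_i}{u^B}-{\Gamma}^A_i\Bigr)e_A,
\]
which is also what one reads off intrinsically from the fact that $\call_\Delta H_i=[\Delta,H_i]$ is the \emph{vertical} field with components ${\Gamma}^A_i-u^B\,\partial{\Gamma}^A_i/\partial u^B$.

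The first equivalence is then Euler's theorem applied fibrewise. The system $u^B\,\partial{\Gamma}^A_i/\partial u^B={\Gamma}^A_i$, read on the domain on which the connection is actually defined (all of $E$, or $E$ minus the submanifold $C$ in the non-smooth case), holds precisely when each ${\Gamma}^A_i(x,\cdot)$ is homogeneous of degree one in the fibre variables, and a one-line computation in coordinates identifies this degree-one homogeneity of the connection coefficients with the condition $\xi^\H(\lambda a,v)=T\mu_\lambda(\xi^\H(a,v))$. The same argument phrased without coordinates runs: by the starting identity, $D_{X^\H}\cansec=0$ for all $X$ is the statement that $[\Delta,W]$ is horizontal for every horizontal vector field $W$, which by the usual characterization of distributions invariant under a flow is exactly invariance of $\Hor$ under the dilations $\mu_\lambda$, i.e. homogeneity of $\Hor$. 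This settles the first assertion.

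For the second assertion I would invoke the well-known upgrade from homogeneity to linearity recalled before the statement: if the connection is smooth on a neighbourhood of the zero section, a degree-one homogeneous function that is moreover $C^\infty$ at the origin is necessarily linear in the fibre variables (by Taylor's formula, or by differentiating the Euler identity once more), so $D_{X^\H}\cansec=0$ forces ${\Gamma}^A_i(x,u)={\Gamma}^A_{iB}(x)u^B$, which is the defining condition for $\Hor$ to be a linear connection; the converse is immediate, the coefficients of a linear connection being linear in $u$. I do not expect a genuine obstacle here, the proposition being in essence Euler's theorem in disguise; the only point requiring care is the bookkeeping for connections that are smooth only off a submanifold $C$, so that ``homogeneous'' must be read with $\lambda\neq0$ and Euler's theorem applied on $E-C$, and the two equivalences must be kept cleanly separated according to whether the smoothness hypothesis on the zero section is available.
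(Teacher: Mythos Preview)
Your proposal is correct and follows essentially the same route as the paper: both start from the identity $D_{X^\H}\cansec=-\kappa(\call_\Delta X^\H)$ and read the vanishing as invariance of $\Hor$ under the flow of $\Delta$, i.e.\ under dilations. Your version is simply more explicit---you add the local computation via Euler's theorem and spell out the homogeneity-to-linearity upgrade, whereas the paper dispatches the whole argument in three lines and defers the local formula for the tension to the definition that follows.
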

\begin{proof}
From $D_{X^\H}\cansec=-\kappa(\call_\Delta X^\H)$ it follows that $D_{X^\H}\cansec=0$ for all $X$ if and only if the horizontal distribution is invariant under the flow of $\Delta$, i.e. under the diffeomorphisms $D_s(a)=\mathrm{e}^{s}a$. In other words, if and only if $\Hor$ is invariant under multiplication by positive real numbers.
\end{proof}

\begin{definition}
The $E$-valued 1-form $\bs{t}$ along $\pi$ defined by $\bs{t}(X)=-D_{X^\H}\cansec$ is called the tension tensor of the connection.
\end{definition}

In local coordinates the tension reads $\bs{t}=(\Gamma^A_i-\Gamma^A_{iB}u^B)e_A\otimes dx^i$.

\subsection*{Parallel transport}
The parallel transport defined by the linearized connection can be easily related to the geometry of the nonlinear connection, as~\eqref{basic.covariant.derivative.rules} reveals. The following result generalizes those in~\cite{Crampin.Berwald,MikeFest}

\begin{theorem}
\label{parallel.transport}
The linearized connection is characterized by the following parallel transport rules:
\begin{itemize}
\item Parallel transport along a vertical curve is the standard complete parallelism on the vertical fibre (as a vector space).
\item For a horizontal curve embedded in the flow ${\phi}_t$ of the horizontal lift $X^\H$ of a basic vector field $X\in \vectorfields{M}$ the parallel transport map is $\psi_t={\kappa}{\circ}T{\phi}_t{\circ}\vlift$. 
\end{itemize}
\end{theorem}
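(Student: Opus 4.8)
The plan is to use that a linear connection is determined by its parallel transport, so the statement splits into two parts: that the linearized connection $D$ obeys the two transport rules, and that these rules already characterize $D$. Throughout I would apply the facts on parallel transport recalled in Section~\ref{preliminaries} to the bundle $\map{\pr_1}{\pi^*E}{E}$, whose base is $E$: a section along a curve $c$ in $E$ is $D$-parallel iff its covariant derivative along $c$ vanishes, and if $c$ is an integral curve of a vector field $Y\in\vectorfields{E}$ then the parallel section through a point is obtained by flowing that point along the $D$-horizontal lift of $Y$ to $\pi^*E$.

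First I would treat vertical curves. Such a curve $c$ stays in a single fibre $E_m$, and the fibre of $\pi^*E$ over each $c(t)$ is $E_m$; I claim that for every $b\in E_m$ the constant section $s\equiv b$ along $c$ is parallel. Since $\dot c(t)$ is vertical it is the value of some vertical field $\eta^\V$, and on extending $s$ to the basic section $\sigma=\bar b\circ\pi$ with $\bar b(m)=b$, the second formula of~\eqref{basic.covariant.derivative.rules} gives $D_{\eta^\V}\sigma=\kappa\bigl(T\sigma(\eta^\V)\bigr)=\kappa\bigl(T\bar b(T\pi(\eta^\V))\bigr)=0$, because $\eta^\V$ is $T\pi$-vertical; equivalently this is $D_{V_B}e_A=0$ together with constancy of the components of $b$. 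Hence parallel transport between the fibres of $\pi^*E$ over points of $E_m$ is the identity, which is precisely the complete parallelism on $E_m$ as a vector space.

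Next, horizontal curves. Fix a basic $X\in\vectorfields{M}$, let $\phi_t$ be the flow of $X^\H\in\vectorfields{E}$, and let $c$ be an integral curve of $X^\H$ with $c(0)=e_0$. By the quoted fact the $D$-parallel section along $c$ is a flow line of the $D$-horizontal lift
\begin{equation*}
(X^\H)^{\bar{\H}}=X^i\pd{}{x^i}-X^i\Gamma^A_i\,\pd{}{u^A}-X^i\Gamma^A_{iB}w^B\,\pd{}{w^A},
\end{equation*}
so a parallel section $\sigma=w^A(t)e_A$ along $c$ solves the linear system $\dot w^A=-X^i\Gamma^A_{iB}w^B$. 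It then remains to identify $\kappa\circ T\phi_t\circ\vlift$ with the fundamental solution of this system. Because $X$ is basic, the $x$-components of $\phi_t$ are independent of the fibre coordinates, so $T\phi_t$ maps $\Ver E$ into $\Ver E$ and a short computation gives $\kappa\bigl(T\phi_t(\vlift(e_0,b))\bigr)=\bigl(\partial u^A(t)/\partial u_0^B\bigr)b^B e_A$, the matrix being the fibre part of the Jacobian of $\phi_t$; differentiating the flow equations $\dot u^A=-X^i\Gamma^A_i(x,u)$ of $X^\H$ with respect to $u_0^B$ shows that $J^A_B(t)=\partial u^A(t)/\partial u_0^B$ satisfies $\dot J^A_B=-X^i\Gamma^A_{iC}J^C_B$ with $J(0)=\id$, i.e.\ $J(t)$ is exactly that fundamental solution. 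Hence $\psi_t=\kappa\circ T\phi_t\circ\vlift$. (Intrinsically, one notes that $(\phi_t)_*$ preserves vertical vector fields, so the family $\kappa\circ T\phi_t\circ\vlift$ of bundle maps is well defined, and its derivative at $t=0$ reproduces $\sigma\mapsto\kappa([X^\H,\sigma^\V])=D_{X^\H}\sigma$ by~\eqref{basic.covariant.derivative.rules}, which is the infinitesimal parallel transport equation.)

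For the characterization, I would observe that the velocities of the parallel sections produced above span $\overline{\Hor}$ at every point of $\pi^*E$: the vertical curves contribute all of $\operatorname{span}\{\partial/\partial u^A\}$, while the horizontal curves attached to basic fields with $X(m)$ ranging over $T_mM$ contribute the directions $X^i\partial/\partial x^i-X^i\Gamma^A_i\partial/\partial u^A-X^i\Gamma^A_{iB}w^B\partial/\partial w^A$; together these have rank $\dim M+\operatorname{rank}E=\operatorname{rank}\overline{\Hor}$, so any linear connection on $\pr_1$ obeying the two rules has horizontal distribution $\overline{\Hor}$ and therefore coincides with $D$. I expect the only genuine work to be in the previous paragraph, namely matching $\kappa\circ T\phi_t\circ\vlift$ with the fundamental solution of the parallel transport ODE, where it is essential that $X$ be basic so that $T\phi_t$ preserves the vertical subbundle and the base flow is independent of the fibre coordinate.
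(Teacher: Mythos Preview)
Your proposal is correct; the vertical-curve part matches the paper's argument almost exactly, and for the characterization you spell out what the paper states in one line (that the rules fix~\eqref{basic.covariant.derivative.rules}). The genuine difference is in the horizontal case. You argue in coordinates: the $D$-parallel sections along an integral curve of $X^\H$ satisfy the linear system $\dot w^A=-X^i\Gamma^A_{iB}w^B$, and you identify this with the variational equation of the nonlinear flow $\phi_t$ in the fibre directions, so the parallel transport is the fibre Jacobian $J^A_B=\partial u^A/\partial u_0^B$, which is exactly $\kappa\circ T\phi_t\circ\vlift$ in coordinates. The paper instead works intrinsically: it first checks directly that $\psi_s=\kappa\circ T\phi_s\circ\vlift$ is a one-parameter group covering $\phi_s$ (using $P_\V=\vlift\circ\kappa$ and the fact that $T\phi_s$ preserves $\Ver E$), and then computes $\frac{d}{ds}\big|_{s=0}(\psi_{-s}\circ\sigma\circ\phi_s)=\kappa([X^\H,\sigma^\V])=D_{X^\H}\sigma$, so $\psi_s$ must be the flow of the $D$-horizontal lift of $X^\H$. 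Your approach makes the link with the variational (linearized) equation of the flow very transparent and requires no flow-composition identity; the paper's approach avoids coordinates and yields as a by-product that $\psi_t$ is the fibre derivative $\calf\phi_t$, which is the conceptual content highlighted in the remark following the theorem. One caution: your parenthetical intrinsic remark (``derivative at $t=0$ reproduces $D_{X^\H}\sigma$'') is not by itself a proof without either your full ODE argument or the paper's verification that $\psi_s$ is a flow; knowing only the derivative at $s=0$ does not pin down $\psi_s$ unless one knows it is a one-parameter group.
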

\begin{proof}
Let $\map{\gamma}{[t_0,t_1]}{E}$ be a curve contained in the fibre $E_m=\pi^{-1}(m)$ for some $m\in M$, i.e. $\pi(\gamma(t))=m$ for all $t\in[t_0,t_1]$. Let $p_0=\gamma(t_0)$ and $p_1=\gamma(t_1)$ be the endpoints of $\gamma$. A section $\sigma\in\sec{\pi^*E}$ is parallel along $\gamma$ if $D_{\dot{\gamma}(t)}\sigma=0$ for all $t\in[t_0,t_1]$. Since $\dot{\gamma}(t)$ is vertical, from the definition of the linear connection $D$ we have that $D_{\dot{\gamma}(t)}\sigma=\kappa(T\sigma(\dot{\gamma}))$. It follows that $\sigma$ is parallel if and only if $\kappa\bigl(\frac{d}{dt}(\sigma\circ\gamma)(t)\bigr)=0$. The curve $\sigma\circ\gamma$ takes values on the fibre $E_m$ from where it follows that the tangent vector to this curve is vertical and hence $\kappa\bigl(\frac{d}{dt}(\sigma\circ\gamma)(t)\bigr)=0$ if and only if $\frac{d}{dt}(\sigma\circ\gamma)(t)=0_{\gamma(t)}$ for all  $t\in[t_0,t_1]$. Therefore $\sigma\circ\gamma$ is constant and the parallel transport of $(\gamma(t_0),\sigma(p_0))$ along $\gamma$ is $(\gamma(t_1),\sigma(p_1))=(\gamma(t_1),\sigma(p_0))$. Thus, the parallel transport rule along curves in $E_m$ is independent of the curve $\gamma$ that joins the endpoint $p_0$ and $p_1$, and hence a complete parallelism on $E_m$. 

Let $X$ be a vector field on $M$ and $X^\H\in\vectorfields{E}$ its horizontal lift with respect to the given nonlinear connection. Denote by $\varphi_t$ the flow of $X$ and by $\phi_t$ the flow of $X^\H$, so that $\pi\circ\phi_t=\varphi_t\circ\pi$. Consider the family of maps $\psi_s=\kappa\circ T\phi_s\circ\xi^\V$. We will show that $\psi_s$ is a local flow on $\pi^*E$ which projects to $\phi_s$. Indeed, for $s=0$ we have $\psi_0=\kappa\circ\xi^\V=\id_{\pi^*E}$ and for $s,t\in\Real$ (as long as all terms are defined)
\begin{align*}
\psi_s\circ\psi_t
&=\kappa\circ T\phi_s\circ\xi^\V\circ\kappa\circ T\phi_t\circ\xi^\V
=\kappa\circ T\phi_s\circ P_\V \circ T\phi_t\circ\xi^\V={}\\
&=\kappa\circ T\phi_s \circ T\phi_t\circ\xi^\V
=\kappa\circ T\phi_{s+t}\circ\xi^\V
=\psi_{s+t},
\end{align*}
where we have used that $P_\V=\xi^\V\circ\kappa$ and that $T\phi_s\circ\xi^\V$ takes vertical values due to the projectability of the flow $\phi_s$. Moreover, 
\[
\pr_1\circ \psi_s
=\pr_1\circ\kappa\circ T\phi_s\circ\xi^\V
=\tau_E\circ T\phi_s\circ\xi^\V
=\phi_s\circ\tau_E\circ\xi^\V
=\phi_s\circ\pr_1,
\]
so that $\psi_s$ projects to $\phi_s$. 

To end the proof we just need to show that $D_{X^\H}\sigma=\frac{d}{ds}(\psi_{-s}\circ\sigma\circ\phi_s)\at{s=0}$, so that $\psi_s$ is the flow of the horizontal lift for the linearized connection of $X^\H$, and hence the parallel transport map along the flow of $X^\H$. First, notice that
\[
\psi_{-s}\circ\sigma\circ\phi_s
=\kappa\circ T\phi_{-s}\circ\xi^\V\circ\sigma\circ\phi_s
=\kappa\circ T\phi_{-s}\circ\sigma^\V\circ\phi_s.
\]
If $p\in E$, the curve $s\mapsto T\phi_{-s}\circ\sigma^\V\circ\phi_s(p)$ takes values on the vector space $E_\pi(p)$ so that taking the derivative 
\begin{align*}
\frac{d}{ds}(\psi_{-s}\circ\sigma\circ\phi_s)(p)\at{s=0}
&=\frac{d}{ds}(\kappa(T\phi_{-s}(\sigma^\V(\phi_s(p)))))\at{s=0}\\
&=\kappa\left(\frac{d}{ds}(T\phi_{-s}(\sigma^\V(\phi_s(p))))\at{s=0}\right)\\
&=\kappa(\call_{X^\H}\sigma^\V(p))\\
&=\kappa([X^\H,\sigma^\V])(p)\\
&=(D_{X^\H}\sigma)(p).
\end{align*}
As this relation holds true for every $p\in E$, the second statement is proved.

The given rules of parallel transport determine the connection, as equations~\eqref{basic.covariant.derivative.rules} determine the covariant derivative~$D$.
\end{proof}

\begin{remark}
Theorem~\ref{parallel.transport} expresses in a different manner in what sense the connection $D$ is the linearization of the nonlinear connection. When the connection is linear, the flow $\phi_s$ of $X^\H$ is a linear map in $E$, which is but the parallel transport map along the integral curves of $X$. When the connection is nonlinear, we can linearize the flow by means of
\[
\frac{d}{dt}\phi_s(p+ta)\at{t=0}=T\phi_s(\xi^\V(p,a))
\]
which is a vertical vector to whom we apply the inverse of $\xi^\V$ (in other words $\kappa$) to get a vector back in the fibre $E_{\pi(p)}$. In other words the parallel transport along horizontal curves is the fibre derivative $\psi_s=\calf\phi_s$ of the the flow of $X^\H$.
\end{remark}

A frequent problem one encounters is to decide if a given section along $\pi$ or a given distribution along $\pi$ is basic. We recall that a section $\sigma\in\sec{\pi^*E}$ is said to be basic if there exists a section $\beta\in\sec{E}$ such that $\sigma=\beta\circ\pi$. A distribution $\cald$ along $\pi$ is said to be basic if there exists a subbundle $F\subset E$ such that $\cald=\pi^*F$.  

\begin{proposition}
\label{characterization.basic}
A section $\sigma\in\sec{\pi^*E}$ is basic if and only if $D_{\eta^\V}\sigma=0$ for every $\eta\in\sec{\pi^*E}$. A distribution $\cald\subset\pi^*E$ is basic if and only if $D_{\eta^\V}\cald\subset\cald$ for every $\eta\in\sec{\pi^*E}$.
\end{proposition}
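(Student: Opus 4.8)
The plan is to treat the two halves in parallel, reducing each to an analysis of $D_{\eta^\V}$ along a single fibre $E_m=\pi^{-1}(m)$ and using that $E_m$, being a vector space, is connected. The underlying point — already the content of the first item of Theorem~\ref{parallel.transport} — is that the vertical part of $D$ is the flat, path-independent parallelism on each fibre; concretely, writing $\eta=\eta^Be_B$ and $\sigma=\sigma^Ae_A$, the local formula for $D$ gives $D_{\eta^\V}\sigma=\eta^B(\partial\sigma^A/\partial u^B)\,e_A$, so that $D_{\eta^\V}\sigma$ merely records the variation of the components of $\sigma$ in the fibre directions.

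First I will dispatch the statement about sections. If $\sigma=\beta\circ\pi$ is basic, then $T\sigma=T\beta\circ T\pi$ annihilates vertical vectors, so $D_{\eta^\V}\sigma=\kappa(T\sigma(\eta^\V))=0$. Conversely, if $D_{\eta^\V}\sigma=0$ for every $\eta\in\sec{\pi^*E}$, the local formula forces all partial derivatives $\partial\sigma^A/\partial u^B$ to vanish; since the fibre is connected this makes $\sigma|_{E_m}$ constant, equal to some $\beta(m)\in E_m$. The map $m\mapsto\beta(m)$ is a section of $E$, smooth because $\beta=\sigma\circ z$ for the zero section $z\in\sec{E}$, and $\sigma=\beta\circ\pi$, so $\sigma$ is basic.

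Next, the distribution statement. For the forward implication, if $\cald=\pi^*F$ I will take a local frame $\{\beta_a\}$ of $F$, which is a basic frame of $\cald$; by the section part $D_{\eta^\V}\beta_a=0$, so for $\tau=f^a\beta_a\in\sec{\cald}$ the derivation rule gives $D_{\eta^\V}\tau=\eta^\V(f^a)\beta_a\in\sec{\cald}$, whence $D_{\eta^\V}\cald\subset\cald$. For the converse I will fix $m$ and restrict $\cald$ to $E_m$, where it becomes a rank-$k$ subbundle of the trivial bundle $(\pi^*E)|_{E_m}\cong E_m\times E_m$. The hypothesis $D_{\eta^\V}\cald\subset\cald$ says this subbundle is stable under the vertical covariant derivative; integrating the parallel-transport equation along a curve in $E_m$ in a frame adapted to $\cald$, one concludes that $\cald|_{E_m}$ is invariant under the vertical parallel transport, which by Theorem~\ref{parallel.transport} is the complete, path-independent parallelism of the vector space $E_m$. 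Invariance under a complete parallelism forces the fibre $\cald_p\subset E_m$ to be the same subspace $F_m$ for every $p\in E_m$. Finally $F:=\bigsqcup_{m}F_m$ is a genuine smooth subbundle of $E$ — it coincides with $z^*\cald$ for the zero section $z$ — and $\cald=\pi^*F$ because the two distributions have equal fibres over every point of $E$, so $\cald$ is basic.

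The step I expect to be the crux is the converse of the distribution statement: passing from the pointwise inclusion $D_{\eta^\V}\cald\subset\cald$ to genuine invariance under vertical parallel transport, and then checking that the fibrewise-constant family $\{F_m\}$ assembles into a smooth subbundle (which is exactly where the description $F=z^*\cald$ is used). One can sidestep parallel transport altogether by a direct coordinate computation: in a local frame $\{\tau_a\}$ adapted to $\cald$ the hypothesis reads $\partial\tau_a/\partial u^B\in\cald$, which says that the Gauss map $u\mapsto\cald_{(m,u)}$ into the Grassmannian of $E_m$ has vanishing derivative and is therefore constant on the connected fibre.
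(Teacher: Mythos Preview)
Your proof is correct and follows essentially the same route as the paper's: both arguments reduce to fibrewise constancy via the vertical parallel transport of Theorem~\ref{parallel.transport} and then recover the base object by evaluating at the zero section. The only differences are cosmetic---you use the local coordinate formula directly for the section converse where the paper invokes Theorem~\ref{parallel.transport}, and you are slightly more explicit about the smoothness of $\beta$ and $F$ (via $\beta=\sigma\circ z$, $F=z^*\cald$) than the paper, which simply writes $\beta(m)=\sigma(0_m)$ and $F(m)=D(0_m)$.
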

\begin{proof}
If $\sigma$ is basic it is of the form $\sigma=\beta\circ\pi$ for some section $\beta\in\sec{E}$. Then $D_{\eta^\V}\sigma=\kappa(T\beta\circ T\pi(\eta^\V))=0$ for all $\eta\in\sec{\pi^*E}$. Conversely, if $D_{\eta^\V}\sigma=0$ for all $\eta\in\sec{\pi^*E}$ we have that $\sigma$ is parallel along any vertical curve. From Proposition~\ref{parallel.transport} it follows that $\sigma(a)=\sigma(b)$ for all $a,b\in E_m$ and all $m\in M$. Defining $\beta(m)=\sigma(0_m)$ we have that $\sigma=\beta\circ\pi$.

If $\cald$ is basic there exists a finite family $\{\beta_I\}$ of sections of $E$ such that any section $\sigma$ in $\cald$ is of the form $\sigma=\sum_If_I(\beta_I\circ\pi)$ for some functions $f_I\in\cinfty{E}$. Then $D_{\eta^\V}\sigma=\sum_I\eta^\V(f_I)(\beta_I\circ\pi)$ is a section of $\cald$. Conversely, if $D_{\eta^\V}\cald\subset\cald$ for all $\eta\in\sec{\pi^*E}$ we have that $\cald$ is invariant under parallel transport along any vertical curve. For $a\in E$ let $D(a)\subset E_{\pi(a)}$ be the vector subspace $D(a)=\pr_2(\cald(a))$, so that $\cald(a)=(a,D(a))$. From Proposition~\ref{parallel.transport} it follows that $D(a)=D(b)$ for all $a,b\in E$ such that $\pi(a)=\pi(b)$. Defining $F(m)=D(0_m)$ for every $m\in M$ we have that $\cald=\pi^*F$.
\end{proof}

Due to the linearity in the first argument of the covariant derivative, the above properties can be equivalently stated as follows. A section ${\sigma}\in \sec{{\pi}^*E}$ is basic if and only if $D_u{\sigma}=0$ for all $u\in \Ver{{\pi}}$. A distribution $\cald{\subset}{\pi}^*E$ is basic if and only if $D_u{\sigma}\in \cald({\tau}_E(u))$ for all ${\sigma}\in\sec{\cald}$ and all $u\in \Ver{{\pi}}$.

\subsection*{Natural prolongation}
\label{natural.operations.vector}
The procedure described in this paper, which assigns a connection on the pullback bundle ${\pi}^*E$ to a non linear one on the bundle $E$, can be properly understood as a prolongation. The pullback bundle $\map{\pr_1}{{\pi}^*E}{E}$ can be identified (via the vertical lifting) with the vertical bundle $\map{{\tau}_E^\V\equiv{\tau}_E|_{\Ver{E}}}{\Ver{E}}{E}$. Under this identification the linearized connection can be considered as a connection on the vertical bundle. 

A systematic study of of the functorial prolongation of a connection on a (non necessarily linear) bundle $B\to M$ to a connection on the vertical bundle has been carried out in~\cite{DouMi2, DouMi1, DouMi3}. In~\cite{DouMi1} it is proved that a bundle functor $G$ on the category of fibred manifolds admits a functorial operator lifting connections on $B\to M$ to connections on $GB\to B$ if and only if the functor $G$ is a trivial bundle functor $B\mapsto GB=B\times W$, for some manifold $W$. As a consequence of this result, since the vertical functor is not a trivial functor, we deduce that there is no natural operator transforming connections on $B\to M$ to connections on $\Ver{B}\to B$. However, as it is already remarked in~\cite{DouMi1}, the obstruction for the existence of such a natural operator may disappear if we consider some additional structure. In~\cite{NatOp}, by restricting to the subcategory of affine bundles, it is shown that there exists only a 1-parameter family of first order natural operators (natural on local isomorphisms of affine bundles) transforming connections on $\map{{\pi}}{E}{M}$ into connections on $\map{{\tau}_E^\V}{\Ver{E}}{E}$. 

Our linearization procedure transforms a connection on a vector bundle (a particular case of an affine bundle) into a linear connection on the pullback bundle, or equivalently to the vertical bundle. From the expression of the connection coefficients of the linearized connection it is clear that our construction is a first order differential operator, and from the definition (see equation~\eqref{linearization.general}) it is easy to see that it is a natural operation, as it is constructed geometrically only in terms of brackets, the vertical lifting and the connection itself. Therefore, it should coincide with one of the members of the above mentioned family.

With the notation in this paper, the 1-parameter family of connections $\overline{\Hor}_{\lambda}$, ${\lambda}\in {\Real}$, described in~\cite{NatOp} (section~\textbf{46.10}) is given as the annihilator of the 1-forms
\[
dw^A+{\Gamma}^A_{iB}(x,u)w^Bdx^i+{\lambda}\bigl(du^A+{\Gamma}^A_i(x,u)dx^i\bigr).
\]
However, no explicit intrinsic construction is provided in~\cite{NatOp}. Inspired by the result of Theorem~\ref{parallel.transport}, a long but straightforward coordinate calculation shows that the horizontal lifting $\map{{\xi}^{\bar\H_{\lambda}}}{({\tau}_E^\V)^*TE}{T\Ver{E}}$ associated to the connection $\overline{\Hor}_{\lambda}$ on $\Ver{E}\to E$ is given by 
\[
{\xi}^{\bar\H_{\lambda}}(v,w)={\chi}_E\Bigl(T\hlift\bigl(v,0_{T{\pi}(w)}\bigr)\Bigr)
+
T\vlift\Bigl(w-\hlift\bigl(a,T{\pi}(w)\bigr), \vlift\bigl(b,{\lambda}{\kappa}(w)\bigr) \Bigr),
\] 
where $\map{{\chi}_E}{TTE}{TTE}$ is the canonical involution and $(a,b)=(\vlift)^{-1}(v)$. Our connection corresponds to ${\lambda}=0$ and it is the only one in the family which is linear (all the other are affine connections).

We can resume the contents of this subsection by saying that the linearized connection defined in this paper is the only linear connection on the pullback bundle (or equivalently in the vertical bundle) that can be obtained as a first order natural prolongation of a general connection on a vector bundle, and that for general bundles a similar construction is not possible.

\subsection*{Curvature} The curvature tensor of the linear connection $D$ is defined by 
\begin{equation}
\curv(U,V)\sigma=[D_U,D_V]\sigma-D_{[U,V]}\sigma,
\end{equation}
for $U,V\in\vectorfields{E}$ and $\sigma\in\sec{\pi^*E}$. It is clear that this tensor must be related to the curvature of the initial nonlinear connection. 

We first notice that the $\scriptstyle\mathsf{VV}$-component vanishes
\begin{equation}
\curv(\zeta^\V,\eta^\V)\sigma=0.
\end{equation} 
This follows immediately from the parallel transport rule along vertical curves. A more direct proof is as follows. $\curv$ is a tensor, so that we just need to show that it vanishes when applied to the elements of some generating family. Taking a basic section $\sigma\in\sec{E}$ then $D_{\eta^\V}\sigma=0$, $D_{\zeta^\V}\sigma=0$ and $D_{[\zeta^\V,\eta^\V]}\sigma=0$, from where $\curv(\zeta^\V,\eta^\V)\sigma=0$ follows. 

For the $\scriptstyle\mathsf{VH}$-component $\curv(\xi^\V,Y^\H)\sigma$ we have the coordinate expression
\begin{equation}
\curv(V_B,H_i)e_A=\pd{\Gamma^C_{iA}}{u^B}\,e_C=\pd{^2\Gamma^C_{i}}{u^A\partial u^B}\,e_C,
\end{equation}
which is clearly symmetric in the indices $A, B$. It follows that 
\begin{equation}
\label{VH.is.symmetric}
\curv(\eta^\V,X^\H)\sigma=\curv(\sigma^\V,X^\H)\eta.
\end{equation}

Finally, for the $\scriptstyle\mathsf{HH}$-component we have the coordinate expression  
\begin{equation}
\curv(H_i,H_j)e_A=\left[
H_i(\Gamma^B_{jA}) - H_j(\Gamma^B_{iA}) + \Gamma^B_{iC}\Gamma^C_{jA} - \Gamma^B_{jC}\Gamma^C_{iA}
\right]\,e_B.
\end{equation}
The term $H_i(\Gamma^B_{jA})$ can be written as 
\begin{align*}
H_i(\Gamma^B_{jA})
&=H_i(V_A(\Gamma^B_{j}))\\
&=[H_i,V_A](\Gamma^B_{j})+V_A(H_i(\Gamma^B_{j}))\\
&=\Gamma_{iA}^CV_C(\Gamma^B_{j})+V_A(H_i(\Gamma^B_{j}))\\
&=\Gamma_{iA}^C\Gamma^B_{jC}+V_A(H_i(\Gamma^B_{j})),
\end{align*}
from where 
\begin{equation}
\label{curv.is.Dv.R.local}
\curv(H_i,H_j)e_A=[V_A(H_i(\Gamma^B_{j}))-V_A(H_j(\Gamma^B_{i}))]\,e_B=-V_A(R^B_{ij})\,e_B.
\end{equation}
Therefore the $\scriptstyle\mathsf{HH}$-component provides a linearization along the fibre of the curvature of the nonlinear connection. Later on we will rewrite this relation in intrinsic terms.

The following result gives an interpretation of the $\scriptstyle\mathsf{VH}$-component of the curvature of the connection $D$.
\begin{proposition}
\label{interpretation.VH}
The $\scriptstyle\mathsf{VH}$-component of the curvature, $(\eta,X)\mapsto\curv(\eta^\V,X^\H)$, vanishes if and only if there exists a linear connection $\nabla$ on $\map{\pi}{E}{M}$ such that $D$ is the pullback of $\nabla$ by $\pi$. In such case the curvature tensor of $\nabla$ is $\curv^\nabla(X,Y)\sigma=\curv(X^\H,Y^\H)\sigma$ for all $X,Y\in\vectorfields{M}$ and $\sigma\in\sec{E}$.
\end{proposition}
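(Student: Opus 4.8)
The plan is to prove the two implications of the equivalence and then read off the curvature identity from the fact that $D$ is a pullback connection. Two elementary facts will be used repeatedly: first, $\curv$ is a $\cinfty{E}$-tensor, so it suffices to test it on basic sections $\beta\circ\pi$ (with $\beta\in\sec{E}$) and, where needed, on the local frame $\{H_i,V_A\}$; second, if $X\in\vectorfields{M}$ is basic then $X^\H$ is $\pi$-projectable over $X$ while $\eta^\V$ is $\pi$-vertical, so $[\eta^\V,X^\H]$ is $\pi$-vertical and hence $D_{[\eta^\V,X^\H]}$ annihilates every basic section by Proposition~\ref{characterization.basic}. For the necessity I would assume $D$ is the pullback ${}^\pi\!\nabla$ of a linear connection $\nabla$ on $E$ and evaluate on a basic section: $D_{X^\H}(\beta\circ\pi)=(\nabla_X\beta)\circ\pi$ is basic, hence $D_{\eta^\V}D_{X^\H}(\beta\circ\pi)=0$; moreover $D_{\eta^\V}(\beta\circ\pi)=0$ and $D_{[\eta^\V,X^\H]}(\beta\circ\pi)=0$ by the second fact. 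Combining these through $\curv(U,V)\sigma=D_UD_V\sigma-D_VD_U\sigma-D_{[U,V]}\sigma$ gives $\curv(\eta^\V,X^\H)=0$.

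For the sufficiency, the main move is to show that the hypothesis $\curv(\eta^\V,X^\H)=0$ forces $D_{X^\H}(\beta\circ\pi)$ to be basic for every $\beta\in\sec{E}$ and basic $X$. Indeed, using once more $D_{\eta^\V}(\beta\circ\pi)=0$ and $D_{[\eta^\V,X^\H]}(\beta\circ\pi)=0$, one reads off $D_{\eta^\V}\bigl(D_{X^\H}(\beta\circ\pi)\bigr)=\curv(\eta^\V,X^\H)(\beta\circ\pi)=0$ for every $\eta$, which is precisely the criterion of Proposition~\ref{characterization.basic} for basicness. I would then define $\nabla_X\beta\in\sec{E}$ by $(\nabla_X\beta)\circ\pi=D_{X^\H}(\beta\circ\pi)$ and verify the connection axioms: $\cinfty{M}$-linearity in $X$ from $(fX)^\H=(f\circ\pi)X^\H$ and the $\cinfty{E}$-linearity of $D$ in its first slot, and the Leibniz rule in $\beta$ from the derivation property of $D$ together with $X^\H(f\circ\pi)=(Xf)\circ\pi$.

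It remains to check $D={}^\pi\!\nabla$ and to compute the curvature. Since the difference of two covariant derivatives on $\pi^*E$ is tensorial, it suffices that $D$ and ${}^\pi\!\nabla$ agree on the generating data, and this holds by construction: $D_{H_i}(e_A\circ\pi)=(\nabla_{\partial/\partial x^i}e_A)\circ\pi={}^\pi\!\nabla_{H_i}(e_A\circ\pi)$ and $D_{V_A}(e_B\circ\pi)=0={}^\pi\!\nabla_{V_A}(e_B\circ\pi)$. The curvature identity is then immediate: for basic $X,Y\in\vectorfields{M}$ and $\beta\in\sec{E}$ one has $D_{X^\H}D_{Y^\H}(\beta\circ\pi)=(\nabla_X\nabla_Y\beta)\circ\pi$ and symmetrically for $Y,X$, while writing $[X^\H,Y^\H]=[X,Y]^\H+W^\V$ (legitimate since both $X^\H,Y^\H$ are $\pi$-projectable, the vertical correction $W^\V$ being killed by $D$ on basic sections) gives $D_{[X^\H,Y^\H]}(\beta\circ\pi)=(\nabla_{[X,Y]}\beta)\circ\pi$; subtracting yields $\curv(X^\H,Y^\H)\beta=\curv^\nabla(X,Y)\beta$ under the identification of basic sections.

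I expect the genuine content to be the sufficiency direction, and within it the recognition that the vanishing of the $\scriptstyle\mathsf{VH}$-component of $\curv$ is exactly the obstruction to $D_{X^\H}$ preserving basicness, the bridge being the $\pi$-verticality of $[\eta^\V,X^\H]$ for basic $X$; the connection axioms for $\nabla$ and the comparison with ${}^\pi\!\nabla$ on a frame are routine. As a sanity check the phenomenon is transparent in coordinates: $\curv(V_B,H_i)e_A=\pd{^2\Gamma^C_i}{u^A\partial u^B}\,e_C$ vanishes if and only if the $\Gamma^C_i$ are affine in the fibre variables, hence if and only if the linearized coefficients $\Gamma^C_{iB}=\partial\Gamma^C_i/\partial u^B$ depend on $x$ only and are thus the Christoffel symbols of a linear connection on $E$ whose pullback by $\pi$ is $D$.
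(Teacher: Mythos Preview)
Your argument is correct and follows essentially the same route as the paper: both directions hinge on the verticality of $[\eta^\V,X^\H]$ for basic $X$ together with Proposition~\ref{characterization.basic}, and you define $\nabla$ exactly as the paper does. The only cosmetic differences are that you spell out the connection axioms and the frame comparison $D={}^\pi\!\nabla$ in more detail, and you prove the curvature identity by a direct bracket decomposition $[X^\H,Y^\H]=[X,Y]^\H+W^\V$ rather than invoking the structure equations from Poor as the paper does.
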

\begin{proof}
Let $D$ be the pullback by $\pi$ of a linear connection $\nabla$ on $E$. For every $X\in\vectorfields{M}$ and every $\sigma\in\sec{E}$ we have that $D_{X^\H}\sigma$ is basic, concretely $D_{X^\H}\sigma=(\nabla_X\sigma)\circ\pi$. It follows that $D_{\eta^\V}\sigma$, $D_{\eta^\V}D_{X^\H}\sigma$ vanish for every $\eta\in\sec{E}$. But also $D_{[\eta^\V,X^\H]}\sigma$ vanishes because $[\eta^\V,X^\H]$ is vertical, from where $\curv(\eta^\V,X^\H)\sigma=D_{\eta^\V}D_{X^\H}\sigma-D_{X^\H}D_{\eta^\V}\sigma-D_{[\eta^\V,X^\H]}\sigma=0$.

Conversely, let $X\in\vectorfields{M}$ and $\sigma\in\sec{E}$. We will prove that if $\curv(\eta^\V,X^\H)\sigma=0$ for all $\eta\in\sec{E}$ then $D_{X^\H}\sigma$ is basic. Indeed, since $D_{\eta^\V}\sigma=0$,
\[
D_{\eta^\V}(D_{X^\H}\sigma)
=D_{\eta^\V}D_{X^\H}\sigma-D_{X^\H}D_{\eta^\V}\sigma
=D_{[\eta^\V,X^\H]}\sigma
=0
\]
where we have used that $[\eta^\V,X^\H]$ is a vertical vector field.

It follows that there exists a section $\nabla_X\sigma\in\sec{E}$ such that $D_{X^\H}\sigma=(\nabla_X\sigma)\circ\pi$. The operator $\nabla$ is a linear connection due to the properties of $D$. Also $D_{\eta^\V}\sigma=0$, and hence $D$ is the pullback by $\pi$ of $\nabla$.

The formula for the curvature follows from the structure equations for the curvature tensor (see~\cite{Poor}, paragraph 2.65).
\end{proof}

\begin{remark}
In fact the vanishing of the $\scriptstyle\mathsf{VH}$-component of the curvature characterizes those connections which are affine (in the sense explained bellow in Section~\ref{linearization.affine}). The difference between the connection $\Hor$ and the connection $\nabla$ in Proposition~\ref{interpretation.VH} is given by the tension tensor, which in this case is a basic tensor field. If the connection is smooth on the zero section then it is easy to see that if the tension is a basic tensor field then the connection is affine and the $\scriptstyle\mathsf{VH}$-component of the curvature vanishes.
\end{remark}

\paragraph{Bianchi identities} 
Any linear connection satisfies a set of identities, known as Bianchi identities, which are deduced from the Jacobi identity for the covariant derivatives, $\sum[D_U,[D_V,D_W]]=0$ where the sum is over cyclic permutations of $U,V,W$. They can be expressed as
\[
\sum\Bigl(
[D_U\curv(V,W)]\sigma+\curv(U,[V,W])\sigma
\Bigr)
=0.
\]
Written in this form they do not look very appealing but they carry important information about derivatives of the different components of the curvature. 

To obtain more explicit expressions it is convenient to define the following tensors along $\pi$ which determine the curvature tensor $\curv$:
\begin{itemize}
\item $\map{\Rie}{\pi^*(TM\otimes TM\otimes E)}{\pi^*E}$ by 
\[
\Rie(X,Y)\sigma=\curv(X^\H,Y^\H)\sigma,
\]
\item $\map{\theta}{\pi^*(E\otimes E\otimes TM)}{\pi^*E}$ by 
\[
\theta(\sigma,\zeta)Y=\curv(\zeta^\V,Y^\H)\sigma.
\]
\end{itemize}
Notice that $\theta$ is symmetric while $\Rie$ is skewsymmetric.

In order to find simpler expressions of the Bianchi identities we will use an auxiliary linear connection on $\pi^*TM\to E$. We will assume that the covariant derivative $\bs{D}$ of such connection satisfies the following natural condition: for every vertical vector field $V\in\vectorfields{E}$
\begin{equation}
\label{natural.connection}
\bs{D}_{V}Z=T\pi([V,Z^\H]),\qquad\forall Z\in\sec{\pi^*TM}.
\end{equation}
Connections satisfying this property will be said compatible. For instance, if $\nabla$ is a linear connection on the base manifold $M$ then the connection $\bs{D}_{U}Z=\nabla_{P_\H(U)}Z+T\pi([P_\V(U),Z^\H])$ is a connection on $\pi^*TM\to E$ satisfying such condition. 
The torsion tensor $T$ of the connection $\bs{D}$ is the tensor field defined by 
\[
T(X,Y)=\bs{D}_{X^\H}Y-\bs{D}_{Y^\H}X-[X,Y],\qquad\forall X,Y\in\vectorfields{M}.
\]  

Once fixed a compatible connection we can conveniently write the different brackets of vector fields in $E$ in terms of covariant derivatives:
\begin{equation}
\label{brackets}
\begin{aligned}
&[X^\H,Y^\H]=(\bs{D}_{X^\H} Y)^\H-(\bs{D}_{Y^\H} X)^\H-T(X,Y)^\H+R(X,Y)^\V\\
&[X^\H,\,\eta^\V\,]=(D_{X^\H} \eta)^\V-(\bs{D}_{\eta^\V} X)^\H\\
&[\,\zeta^\V\,,\,\eta^\V\,]=(D_{\zeta^\V} \eta)^\V-(D_{\eta^\V} \zeta)^\V.
\end{aligned}
\end{equation}

We will use the following notation $D^\V_\eta\sigma=D_{\eta^\V}\sigma$, $D^\V_\eta Z=\bs{D}_{\eta^\V}Z$, and similarly $D^\H_X\sigma=D_{X^\H}\sigma$, $D^\H_X Z=\bs{D}_{X^\H}Z$, for $\eta,\sigma\in\sec{\pi^*E}$ and $X,Z\in\sec{\pi^*TM}$. In this way the covariant operators $D^\H$ and $D^\V$ can act over tensor fields along $\pi$ with mixed arguments in $E$ and $TM$. For instance, equation~\eqref{curv.is.Dv.R.local} can be written simply as 
\begin{equation}
\label{curv.is.Dv.R} 
\Rie(X,Y)\sigma=-(D^\V_\sigma R)(X,Y),
\end{equation}
which exactly means $\Rie(X,Y)\sigma=-D_{\sigma^\V}\bigl(R(X,Y)\bigr)+R(\bs{D}_{\sigma^\V}X,Y)+R(X,\bs{D}_{\sigma^\V}Y)$.

Using this machinery, a long but straightforward calculation shows that the Bianchi identities can be written in the form 
\begin{equation}
\label{Bianchi.identities}
\begin{gathered}
\sum\bigl\{(D^\H_X\Rie)(Y,Z)\sigma-\Rie(X,T(Y,Z))-\theta(\sigma,R(Y,Z))X\bigr\}=0\\[3pt]
(D^\V_\eta\Rie)(X,Y)\sigma=(D^\H_X\theta)(\sigma,\eta)Y-(D^\H_Y\theta)(\sigma,\eta)X+\theta(\sigma,\eta)T(X,Y)\\[5pt]
(D^\V_\eta\theta)(\sigma,\zeta)X=(D^\V_\zeta\theta)(\sigma,\eta)X.
\end{gathered}
\end{equation}

It is important to notice that, while the Bianchi identities depend only on the connection $D$, this later expressions depend also on the choice of the connection $\bs{D}$, as the operator $D^\H$ does. On the other hand, notice that the operator $D^\V$ does not depend on the choice of the connection $\bs{D}$, as long as condicion~\eqref{natural.connection} is satisfied.

\medskip

In terms of the operators $D^\V$ and $D^\H$ the tension and the curvature are related by the following identities.
\begin{proposition}
\label{derivatives.of.the.tension}
The tension satisfies the following properties
\begin{align}
&(D^\V_\zeta\bs{t})(X)=\theta(\cansec,\zeta)X\\
&(D^\H_X\bs{t})(Y)-(D^\H_Y\bs{t})(X)+\bs{t}(T(X,Y))+R(X,Y)+\Rie(X,Y)\cansec=0,
\end{align}
for $X,Y\in\sec{\pi^*TM}$ and $\zeta\in\sec{\pi^*E}$.
\end{proposition}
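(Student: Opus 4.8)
The plan is to reduce both identities to statements about a \emph{second} covariant derivative of the canonical section, using $\bs{t}(X)=-D_{X^\H}\cansec=-D^\H_X\cansec$. Differentiating this relation turns $D^\V\bs{t}$ and $D^\H\bs{t}$ into expressions of the form $D_\bullet D_\bullet\cansec$, which can be reorganised into the curvature operator $\curv$ applied to $\cansec$; the remaining ingredients are then the identity $D_{\eta^\V}\cansec=\eta$, the bracket formulas~\eqref{brackets}, the definition of the torsion $T$ of the compatible connection $\bs{D}$, and the definitions of $\Rie$ and $\theta$. Since every object occurring is tensorial in $X,Y,\zeta$, it is enough to verify the identities for basic arguments; for basic $X$ this has the pleasant effect that $\bs{D}_{\zeta^\V}X=0$ (by compatibility~\eqref{natural.connection}) and that $[X^\H,Y^\H]=[X,Y]^\H+R(X,Y)^\V$, which streamlines the bookkeeping considerably. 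Alternatively one keeps general sections along $\pi$ and carries the extra $\bs{D}$-terms; they cancel in the end.

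For the first identity I would expand
\[
(D^\V_\zeta\bs{t})(X)=D_{\zeta^\V}\bigl(\bs{t}(X)\bigr)-\bs{t}\bigl(\bs{D}_{\zeta^\V}X\bigr)
=-D_{\zeta^\V}D_{X^\H}\cansec+D_{(\bs{D}_{\zeta^\V}X)^\H}\cansec ,
\]
and then add and subtract $D_{X^\H}D_{\zeta^\V}\cansec$ and $D_{[\zeta^\V,X^\H]}\cansec$ in order to complete the combination defining $\curv(\zeta^\V,X^\H)\cansec$. Using $D_{\zeta^\V}\cansec=\zeta$ the first extra term is $D_{X^\H}\zeta$, while the bracket formula for $[X^\H,\zeta^\V]$ in~\eqref{brackets} together with $D_{\eta^\V}\cansec=\eta$ gives $D_{[\zeta^\V,X^\H]}\cansec=-D_{X^\H}\zeta+D_{(\bs{D}_{\zeta^\V}X)^\H}\cansec$. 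The $D_{X^\H}\zeta$ contributions cancel and the horizontal term matches the one already present, so what is left is $\curv(\zeta^\V,X^\H)\cansec$, which by definition equals $\theta(\cansec,\zeta)X$. I would double-check the overall sign against the coordinate expressions $\curv(V_B,H_i)e_A=\partial^2\Gamma^C_i/\partial u^A\partial u^B\,e_C$ and $\bs{t}=(\Gamma^A_i-\Gamma^A_{iB}u^B)e_A\otimes dx^i$.

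For the second identity I would likewise write, for basic $X,Y$,
\[
(D^\H_X\bs{t})(Y)-(D^\H_Y\bs{t})(X)
=-[D_{X^\H},D_{Y^\H}]\cansec+D_{(\bs{D}_{X^\H}Y-\bs{D}_{Y^\H}X)^\H}\cansec ,
\]
then substitute $\bs{D}_{X^\H}Y-\bs{D}_{Y^\H}X=T(X,Y)+[X,Y]$ from the definition of the torsion and $[X^\H,Y^\H]=[X,Y]^\H+R(X,Y)^\V$ from~\eqref{brackets}. Applying $D_\bullet\cansec$ and using $D_{R(X,Y)^\V}\cansec=R(X,Y)$, $D_{T(X,Y)^\H}\cansec=-\bs{t}(T(X,Y))$ and the definition $\curv(X^\H,Y^\H)\cansec=\Rie(X,Y)\cansec$, the two copies of $D_{[X,Y]^\H}\cansec$ cancel and one is left with $-\Rie(X,Y)\cansec-R(X,Y)-\bs{t}(T(X,Y))$, which is exactly the asserted identity.

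The computations are short once organised in this way; the only place where care is really needed, and where a sign slip is easy, is keeping separate the two ``covariant derivatives'' hidden in $D^\V\bs{t}$ and $D^\H\bs{t}$: the operator $\bs{D}$ acting on the $TM$-valued slot of $\bs{t}$, which produces the $(\bs{D}_\bullet\bullet)^\H$ compensating terms, and the operator $D$ acting on the $E$-value of $\bs{t}$, which produces the $D_\bullet D_\bullet\cansec$ terms. Reducing to basic $X,Y,\zeta$ at the outset, so that $\bs{D}_{\zeta^\V}X$ vanishes and $[X^\H,Y^\H]$ simplifies, is the safest way to avoid this, and both displayed reassemblies then follow by a direct matching of terms.
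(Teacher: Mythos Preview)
Your proposal is correct and takes essentially the same approach as the paper: both identities are obtained by applying the curvature operator $\curv$ to the canonical section $\cansec$ and rewriting the result via $\bs{t}(X)=-D_{X^\H}\cansec$, $D_{\eta^\V}\cansec=\eta$, and the bracket formulas~\eqref{brackets}. The paper runs the computation from the curvature side towards the covariant derivative of $\bs{t}$, whereas you run it in the opposite direction, but the ingredients and the logic are identical; your option of reducing to basic $X,Y,\zeta$ is a harmless bookkeeping shortcut that the paper does not take (it keeps general sections along $\pi$ and carries the $\bs{D}$-terms throughout).
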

\begin{proof}
We have 
\begin{align*}
\theta(\cansec,\zeta)X
&=\curv(\zeta^\V,X^\H)\cansec\\
&=D_{\zeta^\V}D_{X^\H}\cansec-D_{X^\H}D_{\zeta^\V}\cansec-D_{[\zeta^\V,X^\H]}\cansec\\
&=-D_{\zeta^\V}(\bs{t}(X))-D_{X^\H}\zeta+D_{(D_{X^\H}\zeta)^\V-(\bs{D}_{\zeta^\V}X)^\H}\cansec\\
&=-D_{\zeta^\V}(\bs{t}(X))-\bs{t}(\bs{D}_{\zeta^\V}X)\\
&=D^\V_\zeta(\bs{t}(X))-\bs{t}(D^\V_\zeta X)
\end{align*}
which proves the first relation.

The curvature term $\Rie(X,Y)\cansec$ is equal to
\begin{align*}
\Rie(X,Y)\cansec
&=\curv(X^\H,Y^\H)\cansec\\
&=D_{X^\H}D_{Y^\H}\cansec-D_{Y^\H}D_{X^\H}\cansec-D_{[X^\H,Y^\H]}\cansec\\
&=-D_{X^\H}(\bs{t}(Y))+D_{Y^\H}(\bs{t}(X))-D_{[X^\H,Y^\H]}\cansec.
\end{align*}
Using $[X^\H,Y^\H]=\{\bs{D}_{X^\H}Y-\bs{D}_{Y^\H}X-T(X,Y)\}^\H+R(X,Y)^V$ we get that the term $D_{[X^\H,Y^\H]}\cansec$ is equal to
\[
D_{[X^\H,Y^\H]}\cansec=-\bs{t}(\bs{D}_{X^\H}Y)+\bs{t}(\bs{D}_{Y^\H}X)+\bs{t}(T(X,Y))+R(X,Y).
\]
Thus
\[
\Rie(X,Y)\cansec=-D^\H_X(\bs{t}(Y))+D^\H_Y(\bs{t}(X))+\bs{t}(D^\H_XY)-\bs{t}(D^\H_YX)-\bs{t}(T(X,Y))-R(X,Y),
\]
which proves the second.
\end{proof}

In particular, if $\theta=0$ then $\bs{t}$ is basic. If moreover $R=0$ then the exterior covariant derivative with respect to the pullback connection $\nabla$ (see Proposition~\ref{interpretation.VH}) vanishes. 

\subsection*{The lifted connection}
The pair of connections ($D$, $\bs{D}$) allows to define a linear connection on the tangent bundle $\map{\tau_E}{TE}{E}$ (i.e. a linear connection on the manifold~$E$) by means of
\[
\nabla_U W=(\bs{D}_U\,T\pi(W))^\H+(D_U\,\kappa(W))^\V,\qquad U,W\in\vectorfields{E}.
\]
In more explicit terms 
\[
\nabla_U(X^\H+\sigma^\V)=(\bs{D}_UX)^\H+(D_U\sigma)^\V,
\]
for $U\in\vectorfields{E}$, $X\in\sec{\pi^*TM}$ and $\sigma\in\sec{\pi^*E}$. Obviously this connection carries the same information as both connections. This is the kind of connection defined in~\cite{Byrnes} for the case $E=TM$ where one can choose $\bs{D}=D$, and it is related to the connection in~\cite{Massa.Pagani}. 

\subsection*{Integral sections}
An integral section of an Ehresmann connection on $\map{\pi}{E}{M}$ is a section $\alpha\in\sec{E}$ such that
\begin{equation}
T\alpha(v)=\xi^\H(\alpha(\tau_M(v)),v)\qquad \forall v\in TM.
\end{equation}
Equivalently, $\alpha$ is an integral section if for any vector field $X\in\vectorfields{M}$ we have $T\alpha\circ X=X^\H\circ \alpha$.

In local coordinates, a section $\alpha=\alpha^A(x)e_A$ is an integral section of the connection if and only if it satisfies the partial differential equation
\begin{equation}
\pd{\alpha^A}{x^i}(x^j)+\Gamma^A_i(x,\alpha(x))=0.
\end{equation}
The first order integrability conditions for this system of partial differential equations impose the vanishing of the curvature on the image of an integral section. When the curvature vanishes, Frobenius theorem ensures that there exists a local integral section trough any point of $E$.

\begin{proposition}
Every integral section $\alpha$ defines by pullback a linear connection ${}^\alpha\!D$ on the bundle $\map{\pi}{E}{M}$ whose covariant derivative is given by
\begin{equation}
{}^\alpha\!D_{X}\sigma=\kappa([X^\H,\sigma^\V])\circ\alpha,
\end{equation}
for $\sigma\in\sec{E}$ and $X\in\vectorfields{M}$. The coefficients of this connection are $\Gamma^A_{jB}(x,\alpha(x))$. The curvature tensor of ${}^\alpha\!D$ is given by ${}^\alpha{\curv}(X,Y)\sigma=[\curv(X^\H,Y^\H)\sigma]\circ\alpha$.
\end{proposition}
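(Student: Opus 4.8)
The plan is to recognize ${}^\alpha\!D$ as the pullback of the linearized connection $D$ by the map $\map{\alpha}{M}{E}$, and then to read all three assertions off that identification. First I would identify the pullback bundle $\alpha^*(\pi^*E)\to M$: since $\pi\circ\alpha=\id_M$, its fibre over $m\in M$ is $\set{c\in E}{\pi(c)=\pi(\alpha(m))=m}=E_m$, so $\alpha^*(\pi^*E)=E$ as vector bundles over $M$, and under this identification a basic section $\sigma\circ\pi\in\sec{\pi^*E}$ with $\sigma\in\sec{E}$ restricts along $\alpha$ to $\sigma$ itself. Hence the pullback of the linear connection $D$ by $\alpha$ is a linear connection on $\map{\pi}{E}{M}$, and by the characterization of pullback covariant derivatives recalled in Section~\ref{preliminaries} it is determined by ${}^\alpha\!D_X\sigma=D_{T\alpha\circ X}(\sigma\circ\pi)$ for $X\in\vectorfields{M}$ and $\sigma\in\sec{E}$.

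The second step is where the integral-section hypothesis enters. By definition $\alpha$ being an integral section means $T\alpha\circ X=X^\H\circ\alpha$ for every $X\in\vectorfields{M}$, so $T\alpha\circ X$ is precisely the restriction along $\alpha$ of the globally defined vector field $X^\H\in\vectorfields{E}$. Therefore $D_{T\alpha\circ X}(\sigma\circ\pi)=(D_{X^\H}\sigma)\circ\alpha$, and substituting~\eqref{basic.covariant.derivative.rules} gives the stated formula ${}^\alpha\!D_X\sigma=\kappa([X^\H,\sigma^\V])\circ\alpha$. If one prefers to take this formula as the definition of ${}^\alpha\!D$, the covariant-derivative axioms are immediate: $\Real$-bilinearity is clear, $\cinfty{M}$-linearity in $X$ follows from $(fX)^\H=(f\circ\pi)X^\H$ together with $\pi\circ\alpha=\id_M$, and the Leibniz rule in $\sigma$ from $X^\H(f\circ\pi)=(Xf)\circ\pi$ and again $\pi\circ\alpha=\id_M$.

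For the connection coefficients I would restrict~\eqref{linear.connection.coefficients} along $\alpha$: with $X=\partial/\partial x^j$ one has $T\alpha\circ(\partial/\partial x^j)=H_j\circ\alpha$, hence ${}^\alpha\!D_{\partial/\partial x^j}e_B=(D_{H_j}e_B)\circ\alpha=\bigl(\Gamma^A_{jB}(x,u)\,e_A\bigr)\circ\alpha=\Gamma^A_{jB}(x,\alpha(x))\,e_A$, as claimed. The curvature formula is then obtained from the general fact that the curvature of the pullback of a linear connection is the pullback of its curvature; since curvature is tensorial it extends to vector fields along $\alpha$, and with $T\alpha\circ X=X^\H\circ\alpha$ and $T\alpha\circ Y=Y^\H\circ\alpha$ this yields ${}^\alpha\!\curv(X,Y)\sigma=\bigl(\curv(X^\H,Y^\H)\sigma\bigr)\circ\alpha$.

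The only delicate point --- and the only place the integrability of $\alpha$ is really used --- is the passage from $T\alpha\circ X$, which a priori is merely a vector field along $\alpha$, to the honest vector field $X^\H$ on $E$: it is this identity that lets us write the bracket $[X^\H,\sigma^\V]$ and the curvature $\curv(X^\H,Y^\H)$ as genuine objects on $E$ whose restrictions along $\alpha$ no longer depend on any choice of extension off $\alpha(M)$. Everything else is routine pullback bookkeeping, the tensoriality and Leibniz properties being inherited directly from the already-established properties of $D$.
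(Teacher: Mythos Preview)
Your proposal is correct and follows essentially the same route as the paper: recognize ${}^\alpha\!D$ as the pullback of $D$ by $\alpha$, identify $\alpha^*(\pi^*E)$ with $E$ via $\pi\circ\alpha=\id_M$, use the integral-section identity $T\alpha\circ X=X^\H\circ\alpha$ to obtain the covariant-derivative formula from~\eqref{basic.covariant.derivative.rules}, and deduce the curvature from the general behavior of curvature under pullback. Your write-up is in fact more detailed than the paper's (you spell out the coefficient computation and the verification of the covariant-derivative axioms), but the underlying argument is the same.
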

\begin{proof}
The pullback of the linear connection $D$ by the map $\map{\alpha}{M}{E}$ defines a linear connection ${}^\alpha\!D$ on the bundle $\alpha^*(\pi^*E)\to M$. But this bundle is canonically isomorphic to $E\to M$, because $\pi\circ\alpha=\id_M$. The formula for the covariant derivative follows by noticing that any tangent vector to the image of $\alpha$ is a horizontal vector field. In other words, from $T\alpha\circ X=X^\H\circ\alpha$ it follows ${}^\alpha\!D_{X}\sigma={}^\alpha\!D_X(\sigma\circ\pi\circ\alpha)=D_{T\alpha\circ X}\sigma=D_{X^\H\circ\alpha}\sigma=\kappa([X^\H,\sigma^\V])\circ\alpha$.  The formula for the curvature follows from the structure equations for the curvature (see~\cite{Poor}, paragraph 2.65).
\end{proof}

\section{Linearization of a connection on an affine bundle}
\label{linearization.affine}

In many situations the bundle on which a nonlinear connection is defined is not a vector bundle but an affine bundle $\map{{\pi}}{A}{M}$ modeled on a vector bundle $\map{\bs{{\pi}}}{\bs{V}}{M}$. In such case the fundamental sequence is
\[
\seq 0-> \pi^*\bs{V} -\xi^\V-> TA -p_A-> \pi^*TM ->0,
\]
and the above construction is not readily applicable. We will extend the linearization process to the affine case by a natural procedure of homogenization and restriction. 

\subsection*{The linear span of an affine space}
It is convenient to think on an affine space as an affine hyperplane embedded in a bigger vector space as in~\cite{AffineInVector}. Let $\cala$ be a real finite dimensional affine space modeled on a vector space $\calv$. There exists a canonically defined vector space $\tilde{\cala}$, known as the linear span of $\cala$, endowed with a foliation by affine hyperplanes  $\cala_\lambda$ indexed by real numbers $\lambda\in\Real$, such that $\cala$ is canonically identified with $\cala_1$ and $\calv$ is canonically identified with $\cala_0$ (which is a vector subspace of $\tilde{\cala}$). In other words, there exists a short exact sequence of vector spaces 
\begin{equation}
\label{sequence.affine}
\seq 0-> \calv -{i_0}-> \tilde{\cala} -{j}-> \Real ->0, 
\end{equation}
and the hyperplanes $\cala_\lambda$ are the level sets of the function $j$, i.e. $\cala_\lambda=j^{-1}(\lambda)$ for $\lambda\in\Real$.

The above sequence can be constructed as follows. The vector space $\cala^\dag=\operatorname{Aff}(\cala,{\Real})$ of affine functions defined on $\cala$ is called the (extended) affine dual of $\cala$. It fits in an exact sequence of vector spaces
\begin{equation}
\label{sequence.affine.dual}
\seq 0-> \Real -k-> \cala^\dag -l-> \calv^* ->0,
\end{equation}
where the map $k$ associates to the real number $\lambda$ the constant function with value $\lambda$, and the map $l$ associates to an affine function its linear part. Then the dual vector space $\tilde{\cala}=(\cala^\dag)^*$ satisfies the conditions we have imposed, and the sequence~\eqref{sequence.affine} is just the dual of~\eqref{sequence.affine.dual}. The immersion $\map{i_1}{\cala}{\tilde\cala}$ is given by evaluation, $i_1(a)({\varphi})={\varphi}(a)$ for all ${\varphi}\in \cala^\dag$. The map $i_1$ is affine and the associated lineal map is the canonical immersion $\map{i_0}{\calv}{\tilde{\cala}}$. 

Conversely, it is easy to see that if $\seq 0->\calv-i->\calb-j->\Real->0$ is an exact sequence of vector spaces then $\cala=j^{-1}(1)$ is an affine subspace modeled on $i(\calv)=j^{-1}(0)$, and $\calb$ is canonically isomorphic to $\tilde{\cala}$.

Every point in $\tilde{\cala}-i_0(\calv)$ can be written in the form $z=\lambda i_1(a)$ for $\lambda\in\Real$ and $a\in\cala$ uniquely defined. In fact $\lambda=j(z)$ and hence $i_1(a)=z/j(z)$. This allows to extend any function $f$ defined on $\cala$ to a homogeneous function $\tilde{f}$ defined on $\tilde{\cala}-i_0(\calv)$ by $\tilde{f}(\lambda i_1(a))=\lambda f(a)$. 

Taking an affine frame $(O,\{e_1,\ldots,e_n\})$ on $\cala$ we have a local basis $\{\tilde{e}_0,\tilde{e}_1,\ldots,\tilde{e}_n\}$ of $\tilde{\cala}$ given by $\tilde{e}_0=i_1(O)$ and $\tilde{e}_i=i_0(e_i)$ for $i=1,\ldots,n$. Such a basis will be denoted simply by $\{e_0,e_1,\ldots,e_n\}$ and the context will make clear the meaning. As a consequence, the affine coordinates $(y^1,\ldots,y^n)$ on $\cala$ associated to that frame, define linear coordinates $(z^0,z^1,\ldots,z^n)$ on $\tilde{A}$. The maps $i_1$ and $i_0$ have the expression $i_1(y^1,\ldots,y^n)=(1,y^1,\ldots,y^n)$ and $i_0(z^1,\ldots,z^n)=(0,z^1,\ldots,z^n)$. The map $j$ has the expression $j(z^0,z^1,\ldots,z^n)=z^0$. The affine space $\cala$ is identified with the hyperplane $z^0=1$ and the vector space $\calv$ with the hyperplane $z^0=0$. More generally, the equation for the hyperplane $\cala_\lambda\subset\tilde{A}$ is $z^0=\lambda$. 

\begin{remark}
It is important to notice that the first element of the dual basis, $e^0\in \cala^\dag=\tilde{A}^*$, is but the affine function $j$ and hence it is canonically defined independently of the choice of frame. 
\end{remark}

\subsection*{Linearization of a connection}
The above constructions can be applied fiberwise to an affine bundle $\map{\pi}{A}{M}$ modeled on some vector bundle $\map{\bs{\pi}}{\bs V}{M}$. In this way we have defined a vector bundle $\map{\tilde{{\pi}}}{\tilde{A}}{M}$, with fibres $\tilde{A}_m=\widetilde{A_m}$ for $m\in M$, and canonical immersions $\map{i_1}{A}{\tilde{A}}$ and $\map{i_0}{\bs{V}}{\tilde{A}}$ over the identity in $M$. The canonical section on $\tilde{A}$ will be denoted by $\tilde{\cansec}$ and the canonical section on $A$ will be denoted just $\cansec$. We have the obvious property $i_1\circ\cansec=\tilde{\cansec}\circ i_1$. 

A local section $O\in \sec{A}$ and a local basis $\{e_1,\ldots,e_n\}$ of sections of $\bs{V}$ define a local basis $\{e_0,e_1,\ldots,e_n\}$ of sections of $\tilde{A}$ given by $e_0=i_1{\circ}O$ and $e_A\equiv i_0{\circ}e_A$. Taking local coordinates $(x^i)$ on the base $M$, we have associated coordinates $(x^i,y^A)$ on $A$, $(x^i,z^A)$ on $\bs{V}$ and $(x^i,z^0,z^A)$ on $\tilde{A}$, and the canonical immersions are $i_1(x^i,y^A)=(x^i,1,y^A)$ and $i_0(x^i,z^A)=(x^i,0,z^A)$. 

\medskip

\begin{remark}
In what follows in this paper we will consider $\bs{V}$ and $A$ as submanifolds of $\tilde{A}$ and consequently the maps $i_0$ and $i_1$ will be omitted. 
\end{remark}

\medskip

Given a nonlinear connection $\Hor{\subset}TA$ on $A$ we can define a nonlinear connection $\widetilde{\Hor}$ on $\tilde{A}-\bs{V}$ as the unique homogeneous connection on $\tilde{A}$ whose restriction to $A\subset\tilde{A}$ coincides with $\Hor$. The details of this construction are as follows. Every element $z\in \tilde{A}-\bs{V}$ is of the form $z={\lambda}a$ for ${\lambda}\in {\Real}-\{0\}$ and $a\in A_{\tilde{{\pi}}(z)}$. For $\lambda\in\Real$ we denote by ${\mu}_{\lambda}$ the multiplication map $\map{{\mu}_{\lambda}}{\tilde{A}}{\tilde{A}}$ given by ${\mu}_{\lambda}(z)={\lambda}z$. Then we define the nonlinear connection $\widetilde{\Hor}$ in $\tilde{A}-\bs{V}$ by the horizontal lift
\begin{equation}
\xi^{\tilde{\H}}({\lambda}a,v)=T{\mu}_{\lambda}\bigl(\hlift(a,v)\bigr).
\end{equation}
It is clear that $\xi^{\tilde{\H}}$ coincides with $\xi^\H$ on the points in $A$ and that the extended connection $\xi^{\tilde{\H}}$ is homogeneous, 
\begin{equation}
\xi^{\tilde{\H}}({\lambda}z,v)=T{\mu}_{\lambda}\bigl(\xi^{\tilde{\H}}(z,v)\bigr),\qquad\quad {\lambda}\in {\Real}-\{0\},
\end{equation}
but non linear, in general, because it is not defined on the submanifold $\bs{V}$. It follows that $[X^\H,{\Delta}]=0$ for any vector field $X\in \vectorfields{M}$, where $\Delta$ is the Liouville vector field on $\tilde{A}$

Moreover, this connection has the important property that $\widetilde{\Hor}$ is tangent to the foliation by hyperplanes, that is, for every $z\in \tilde{A}-\bs{V}$ we have that $\widetilde{\Hor}_z$ is tangent to the hyperplane $A_{j(z)}$. This is due to the property $\mu_{\lambda}(A_\rho)=A_{\lambda\rho}$ for $\rho\neq0$, and hence $T\mu_\lambda(TA)=TA_{\lambda}$ for $\rho=1$.

In local coordinates $(x^i,y^A)$ on $A$ and $(x^i,z^0,z^A)$ on $\tilde{A}$, if the expression of the horizontal lift $H_i$ is 
\[
H_i\at{(x,y)}=\pd{}{x^i}\at{(x,y)}-{\Gamma}^A_i(x,y)\pd{}{y^A}\at{(x,y)},
\]
then the expression of the horizontal lift $\tilde{H}_i$ is 
\begin{equation}
\tilde{H}_i\at{(x,z^0,z)}=\pd{}{x^i}\at{(x,z^0,z)}-z^0{\Gamma}^A_i(x,z/z^0)\pd{}{z^A}\at{(x,z^0,z)}.
\end{equation}
In other words the connection coefficients are 
\begin{equation}
\label{coefficients.homogeneous}
\begin{aligned}
\tilde{{\Gamma}}^0_i(x,z^0,z)&=0\\
\tilde{{\Gamma}}^A_i(x,z^0,z)&=z^0{\Gamma}^A_i(x,z/z_0).
\end{aligned}
\end{equation}

The homogeneous nonlinear connection can be linearized, as explained in Section~\ref{linearization}, obtaining a covariant derivative $\tilde{D}$ on the bundle $\tilde{\pi}^*\tilde{A}\to\tilde{A}-\bs{V}$. Finally we can restrict $\tilde{D}$ to the submanifold $A\subset\tilde{A}-\bs{V}$ (in other words we pullback the connection by the inclusion $\map{i_1}{A}{\tilde{A}}$) and we get a covariant derivative $D$ on the bundle $\pi^*\tilde{A}\to A$ which is the linearized connection that we are looking for.

\begin{definition}
The linearization of a nonlinear connection on an affine bundle $\map{\pi}{A}{M}$ is the restriction to $A$ of the linearization of the homogeneous extension of the connection to $\tilde{A}-\bs{V}$.
\end{definition}

It is important to notice that, as $A$ is away from the singular set $\bs{V}$, the above connection is smooth on the whole manifold $A$.

The fact that the connection that we really linearize is homogeneous implies that the tension vanishes. Thus we have the following result.

\begin{proposition}
\label{derivative.of.cansec}
The nonlinear connection can be recovered from the linearization via the expression
\begin{equation}
D_U\cansec={\kappa}(U).
\end{equation}
\end{proposition}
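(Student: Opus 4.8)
The plan is to compute $D_U\cansec$ directly from the definition~\eqref{linearization.general} applied to the homogeneous connection $\widetilde{\Hor}$ on $\tilde{A}-\bs{V}$, and then restrict to $A$. Since any $U\in\vectorfields{A}$ decomposes as $U=X^\H+\eta^\V$ with $X\in\sec{\pi^*TM}$ and $\eta\in\sec{\pi^*\tilde{A}}$ (working with the extended bundle), by $\Real$-linearity in the first slot it suffices to treat the two pieces separately. For the vertical part we already know from the general theory (the identity $D_{\eta^\V}\cansec=\eta$ established in the Homogeneity subsection, which applies verbatim to $\tilde{D}$ and $\tilde{\cansec}$) that $\tilde{D}_{\eta^\V}\tilde{\cansec}=\eta=\kappa(\eta^\V)$. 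For the horizontal part, recall $D_{X^\H}\cansec=-\kappa(\call_\Delta X^\H)$; the key geometric input is that the homogeneous extension satisfies $[X^\H,\Delta]=0$ on $\tilde{A}-\bs{V}$ (this is exactly the property noted right after the definition of $\xi^{\tilde{\H}}$, and is the infinitesimal form of homogeneity of $\xi^{\tilde\H}$). Hence $\tilde{D}_{X^\H}\tilde{\cansec}=0$, so that the tension tensor of $\widetilde{\Hor}$ vanishes identically, and consequently $\tilde{D}_U\tilde{\cansec}=\kappa(P_\V(U))+0=\kappa(U)$ for every $U$, using $\kappa\circ P_\H=0$ and $\kappa(\eta^\V)=\kappa(P_\V(U))$.

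Next I would restrict this identity along the inclusion $\map{i_1}{A}{\tilde{A}}$. The linearized connection $D$ on $\pi^*\tilde{A}\to A$ is by definition the pullback ${}^{i_1}\!\tilde{D}$, and under the canonical identification $i_1\circ\cansec=\tilde{\cansec}\circ i_1$ the canonical section $\cansec$ of $A$ pulls back to (the restriction of) $\tilde{\cansec}$. The connector $\kappa$ for $D$ is likewise the restriction of the connector $\tilde\kappa$ of $\tilde D$ composed with the relevant tangent maps of $i_1$; since $i_1$ is an embedding into $\tilde A-\bs V$ and, crucially, $\widetilde{\Hor}$ is tangent to the hyperplane foliation so that $TA\subset T\tilde A|_A$ is a $\widetilde{\Hor}$-compatible subbundle (horizontal-vertical decomposition of $TA$ is induced from that of $T\tilde A|_A$), the pullback of the vanishing-tension identity $\tilde{D}_{\tilde U}\tilde{\cansec}=\tilde\kappa(\tilde U)$ gives exactly $D_U\cansec=\kappa(U)$ for $U\in\vectorfields{A}$.

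The main obstacle I anticipate is making the restriction step fully rigorous: one must check that pulling back $\tilde D$ by $i_1$ is compatible with all the structural maps ($\kappa$, $P_\V$, vertical lift, the canonical section), i.e. that the diagram relating $TA\hookrightarrow T\tilde A|_A$, $\pi^*\bs V\hookrightarrow\pi^*\tilde A$, and the two connectors commutes. This is where the hypothesis that $\widetilde{\Hor}$ is tangent to the foliation by hyperplanes does the real work — it guarantees $T\mu_\lambda(TA)=TA_\lambda$, hence that $\widetilde{\Hor}_a\subset T_aA$ for $a\in A$, so the horizontal and vertical projectors of $\tilde D$ map $T_aA$ to itself and restrict to those of $D$. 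Once that compatibility is in place the computation above transfers without change. As a sanity check I would also verify the claim in coordinates: from~\eqref{coefficients.homogeneous} the homogeneity $z^0\Gamma^A_i(x,z/z^0)$ gives $\tilde\Gamma^A_{iB}z^B=\tilde\Gamma^A_i$ by Euler's identity (with $\tilde\Gamma^0_i=0$), so the tension $(\tilde\Gamma^A_i-\tilde\Gamma^A_{iB}z^B)e_A$ vanishes, and restricting to $z^0=1$ recovers $D_U\cansec=\kappa(U)$ componentwise; this confirms the intrinsic argument.
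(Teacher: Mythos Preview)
Your proof is correct and follows essentially the same approach as the paper's: work on the homogeneous extension $\widetilde{\Hor}$, split $U$ into horizontal and vertical parts, use the vanishing of the tension (equivalently $[X^{\tilde\H},\Delta]=0$, which is Proposition~\ref{homogeneous}) for the horizontal piece and $\tilde D_{\eta^\V}\tilde\cansec=\eta$ for the vertical piece, then restrict to $A\subset\tilde A$. The paper dispatches the restriction in a single sentence (``By restricting to points in $A\subset\tilde{A}$ we get the result''), whereas you spell out why the restriction is compatible with the horizontal/vertical projectors and the connector; your extra care there is sound but not required, and your coordinate check via Euler's identity is a nice confirmation rather than a new ingredient.
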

\begin{proof}
We prove that the corresponding relation holds true for the homogeneous connection. 

For a horizontal vector field $U=X^\H$, in view of Proposition~\ref{homogeneous}, we have $\tilde{D}_{X^\H}\tilde{\cansec}=0$, because $\tilde{D}$ is the linearization of a homogeneous connection. For a vertical lift $U=\alpha^\V$ we also know that $\tilde{D}_{\alpha^\V}\tilde{\cansec}=\alpha$. Thus $\tilde{D}_{X^\H+\alpha^\V}\tilde{\cansec}=\alpha=\tilde{\kappa}(\alpha^\V)=\tilde{\kappa}(X^\H+\alpha^\V)$. By restricting to points in $A\subset\tilde{A}$ we get the result. \end{proof}

\subsection*{An explicit expression for $D$}
The linearized connection $D$ can be obtained from the definition, by homogenization and restriction, 
\[
(D_U\sigma)(a)
\mathrel{\mathop{=}\limits^{\mathrm{def}}}
(\tilde{D}_{\tilde{U}}\sigma)(a)
=\tilde{\kappa}\Bigl([\tilde{P}_{\H}(\tilde{U}),\sigma^\V]+T\sigma(\tilde{P}_\V(\tilde{U}))\Bigr)(a),
\qquad a\in A,
\]
where $\tilde{U}$ is any extension of $U\in\vectorfields{A}$ to a vector field on $\tilde{A}-\bs{V}$ (for instance the homogeneous extension). While this expression is well-defined in $\tilde{A}$, and the second term is just $\kappa(T\sigma(P_V(U)))(a)$, we must notice that the first term is not directly computable in terms of vector fields tangent to $A$. Essentially, the problem is that a vertical lift is not tangent to $A$. This is clear, for instance, for the vertical lift of the canonical section  $\tilde{\cansec}$ of $\tilde{A}$, for which $\tilde{\cansec}^\V=\Delta$ which is not tangent to the hyperplane foliation but tangent to the transversal rays $R_z=\set{\lambda z}{\lambda\in\Real}$.

However, once fixed a point $a\in A$, and setting $m=\pi(a)$, every vector $z\in\tilde{A}_m$ can be written as a sum $z=\pai{e^0}{z}a+(z-\pai{e^0}{z}a)$. The vector $\bar{z}=z-\pai{e^0}{z}a$ is in $\bs{V}_m$ because $j(\bar{z})=\pai{e^0}{\bar{z}}=\pai{e^0}{z-\pai{e^0}{z}p}=0$. Thus the fibre $\tilde{A}_m$ decomposes as a direct sum $\tilde{A}_m=\langle a \rangle\oplus \bs{V}_m$. As a consequence, the $\cinfty{A}$-module $\sec{\pi^*\tilde{A}}$ decomposes as a direct sum $\sec{\pi^*\tilde{A}}=\langle\cansec\rangle\oplus\sec{\pi^*\bs{V}}$: every section $\sigma\in\sec{\pi^*\tilde{A}}$ can be written in a unique way as a sum
\begin{equation}
\label{canonical.decomposition}
\sigma=\pai{e^0}{\sigma}\cansec+\bar{\sigma},\qquad\bar{\sigma}=\sigma-\pai{e^0}{\sigma}\cansec\in\sec{\pi^*\bs{V}}.
\end{equation}

Taking this decompositions into account we have the following explicit expression for the covariant derivative.

\begin{proposition}
\label{linearization.explicit}
The covariant derivative $D$ can be expressed in terms of vector fields on $A$ in the form 
\begin{equation}
\label{linearization.affine.explicit}
D_U\sigma=\kappa\Bigl([P_\H(U),\bar{\sigma}^\V]+T\sigma(P_\V(U))\Bigr)+P_\H(U)(\pai{e^0}{\sigma})\cansec,
\end{equation}
for every $U\in\vectorfields{A}$ and every $\sigma\in\sec{\pi^*\tilde{A}}$.
\end{proposition}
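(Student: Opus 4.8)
The plan is to verify formula~\eqref{linearization.affine.explicit} by reducing everything to the definition $D_U\sigma=(\tilde D_{\tilde U}\sigma)|_A$ and exploiting the canonical splitting $\sigma=\pai{e^0}{\sigma}\cansec+\bar\sigma$ of~\eqref{canonical.decomposition}. Since $\tilde D$ is $\cinfty{\tilde A}$-linear in $U$ and $\Real$-linear and Leibniz in $\sigma$, it suffices to treat the two summands of $\sigma$ separately and, on the $U$-side, to compute separately for $U$ horizontal and $U$ vertical (with respect to the original connection on $A$); note $P_\H(U)$ and $P_\V(U)$ make sense intrinsically once we observe that $\widetilde{\Hor}$ is tangent to the hyperplane foliation, so that $\tilde P_\H$ and $\tilde P_\V$ restricted to $TA$ agree with the projectors $P_\H,P_\V$ of the affine connection, and $\tilde\kappa|_{TA}$ lands in $\pi^*\bs V\subset\pi^*\tilde A$.

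First I would handle the term $\bar\sigma\in\sec{\pi^*\bs V}$. Its vertical lift $\bar\sigma^\V$ is tangent to $A$ (because adding an element of the model vector bundle keeps one inside the affine fibre), so $\bar\sigma^\V$ is a genuine vector field on $A$ and also extends (homogeneously, or just trivially along rays) to $\tilde A-\bs V$. For such a section the first term of~\eqref{linearization.general} is $\tilde\kappa([\tilde P_\H(\tilde U),\bar\sigma^\V])$, and since $\tilde P_\H(\tilde U)$ restricted to $A$ is $P_\H(U)$ (a vector field tangent to $A$) and $\bar\sigma^\V$ is tangent to $A$, the bracket can be computed intrinsically on $A$, giving $\kappa([P_\H(U),\bar\sigma^\V])$. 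The second term $T\bar\sigma(\tilde P_\V(\tilde U))$ likewise restricts to $T\bar\sigma(P_\V(U))$ and contributes to $\kappa(T\sigma(P_\V(U)))$ after we recombine with the canonical-section piece below.

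Next I would handle the term $\pai{e^0}{\sigma}\cansec$. Here $\cansec$ is the canonical section of $\tilde A$ restricted along $\pi$, whose vertical lift on $\tilde A$ is the Liouville field $\Delta$ — \emph{not} tangent to $A$ — so we genuinely need the homogeneous extension to make sense of the bracket, exactly as the discussion preceding the Proposition warns. By the Leibniz rule, $\tilde D_{\tilde U}(\pai{e^0}{\sigma}\,\cansec)=\tilde U(\pai{e^0}{\sigma})\,\cansec+\pai{e^0}{\sigma}\,\tilde D_{\tilde U}\cansec$. For the homogeneous connection the tension vanishes, so by (the affine-bundle version of) Proposition~\ref{derivative.of.cansec}, or directly by Proposition~\ref{homogeneous} applied to the horizontal part and the identity $\tilde D_{\alpha^\V}\tilde\cansec=\alpha$ for the vertical part, we get $\tilde D_{\tilde U}\cansec=\tilde\kappa(\tilde U)$; restricted to $A$ this is vertical-valued, and when $U$ is horizontal it is $0$, when $U$ is vertical it is $\kappa(U)$. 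Meanwhile $\tilde U(\pai{e^0}{\sigma})$: decompose $\tilde U$ into its horizontal and vertical parts along $A$; the vertical part is (up to tangency adjustments) $P_\V(U)$ acting, but $\pai{e^0}{\cdot}$ depends only on the hyperplane coordinate $z^0$ which the vertical directions along the fibres of $A$ annihilate, so only the horizontal part $P_\H(U)$ survives, yielding $P_\H(U)(\pai{e^0}{\sigma})\,\cansec$. Collecting the horizontal and vertical pieces of $U$, and recombining $T\bar\sigma(P_\V(U))$ with the $\pai{e^0}{\sigma}\kappa(U)$ contribution using $T\sigma=T(\pai{e^0}{\sigma}\cansec)+T\bar\sigma$ and $T\cansec=\id$, reproduces $\kappa(T\sigma(P_\V(U)))$, and we arrive at~\eqref{linearization.affine.explicit}.

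The main obstacle is the bookkeeping around tangency to the hyperplane foliation: one must be careful that $\tilde P_\H,\tilde P_\V$ and $\tilde\kappa$ behave correctly when restricted to $TA$, that $\bar\sigma^\V$ really is tangent to $A$ while $\cansec^\V$ is not, and that in the term $\tilde U(\pai{e^0}{\sigma})$ exactly the horizontal part of $U$ survives. A clean way to discharge this is a short local-coordinate check in $(x^i,z^0,z^A)$ using the coefficients~\eqref{coefficients.homogeneous}: write $\sigma=\sigma^0\cansec+\bar\sigma^A e_A$ with $\sigma^0=\pai{e^0}{\sigma}$, expand $\tilde D_{\tilde U}\sigma$ via~\eqref{linear.connection.coefficients} applied to $\tilde\Gamma$, set $z^0=1$, and match term by term with the right-hand side of~\eqref{linearization.affine.explicit}; the intrinsic argument above is then just the coordinate-free shadow of this computation.
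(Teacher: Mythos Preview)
Your overall strategy---split $\sigma=\pai{e^0}{\sigma}\cansec+\bar\sigma$, treat $\bar\sigma$ by tangency to $A$, treat $\pai{e^0}{\sigma}\cansec$ via Leibniz plus $\tilde D_U\tilde\cansec=\tilde\kappa(U)$, then reassemble---is exactly the paper's approach, and the pieces you identify as needing care (tangency of $\bar\sigma^\V$ to $A$, agreement of $\tilde P_\H,\tilde P_\V,\tilde\kappa$ with $P_\H,P_\V,\kappa$ on $TA$) are the right ones.

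However, one step is wrong as stated. You claim that in $\tilde U(\pai{e^0}{\sigma})$ the vertical part of $U$ contributes nothing because ``$\pai{e^0}{\cdot}$ depends only on the hyperplane coordinate $z^0$, which the vertical directions along the fibres of $A$ annihilate.'' This conflates two different objects: the linear functional $j=\pai{e^0}{\cdot}\in\cinfty{\tilde A}$, which is indeed the coordinate $z^0$, and the function $\pai{e^0}{\sigma}\in\cinfty{A}$, which is the component $\sigma^0(x,y)$ and in general depends on the fibre coordinates $y^A$. So $P_\V(U)(\pai{e^0}{\sigma})$ does \emph{not} vanish. Consequently your ``recombining'' step is also incomplete: you recombine only $\kappa(T\bar\sigma(P_\V(U)))$ and $\pai{e^0}{\sigma}\,\kappa(U)$, but the missing third piece is precisely $P_\V(U)(\pai{e^0}{\sigma})\,\cansec$. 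The correct identity, for $V=P_\V(U)$ vertical, is
\[
\tilde\kappa\bigl(T\sigma(V)\bigr)
=\tilde\kappa\bigl(T\bar\sigma(V)\bigr)
+V(\pai{e^0}{\sigma})\,\cansec
+\pai{e^0}{\sigma}\,\kappa(V),
\]
which one sees by differentiating $a\mapsto\pai{e^0}{\sigma}(a)\,a$ along a vertical curve in the linear fibre $\tilde A_{\pi(a)}$. Once you keep this term, the Leibniz expansion
\[
\tilde D_U(\pai{e^0}{\sigma}\cansec)
=P_\H(U)(\pai{e^0}{\sigma})\,\cansec
+P_\V(U)(\pai{e^0}{\sigma})\,\cansec
+\pai{e^0}{\sigma}\,\kappa(U)
\]
does combine with $\tilde D_U\bar\sigma=\kappa([P_\H(U),\bar\sigma^\V])+\tilde\kappa(T\bar\sigma(P_\V(U)))$ to give~\eqref{linearization.affine.explicit}. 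The paper avoids this bookkeeping trap by computing the bracket $[X^{\tilde\H},\sigma^\V]$ directly (using $[X^{\tilde\H},\Delta]=0$) rather than via Leibniz, but your route works too once the vertical derivative of $\pai{e^0}{\sigma}$ is tracked correctly.
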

\begin{proof}
We prove a similar expression for the linearization $\tilde{D}$ of the homogeneous extension and later we restrict to $A$. Obviously, for a given section $\sigma\in\sec{\tilde{\pi}^*\tilde{A}}$ we can always write it as $\sigma=\pai{e^0}{\sigma}\tilde{\cansec}+\bar{\sigma}$, with $\bar{\sigma}=\sigma-\pai{e^0}{\sigma}\tilde{\cansec}$. At points in $A$ this is just the decomposition~\eqref{canonical.decomposition}. However, notice that $\bar{\sigma}$ takes values on $\bs{V}$ only at points in $A$.

The result is obvious for a vertical vector field $U$ on $A$. For a horizontal lift $U=X^{\tilde{H}}$ of a vector field $X\in\vectorfields{M}$, using the decomposition~\eqref{canonical.decomposition}, we have 
\begin{align*}
\tilde{D}_{X^{\tilde{\H}}}\sigma
&=\tilde{\kappa}([X^{\tilde{\H}},\sigma^\V])\\
&=\tilde{\kappa}([X^{\tilde{\H}},\bar{\sigma}^\V+\pai{e^0}{\sigma}\tilde{\cansec}^\V])\\
&=\tilde{\kappa}([X^{\tilde{\H}},\bar{\sigma}^\V]+[X^{\tilde{\H}},\pai{e^0}{\sigma}\Delta])\\
&=\tilde{\kappa}([X^{\tilde{\H}},\bar{\sigma}^\V])+\kappa(X^{\tilde{\H}}(\pai{e^0}{\sigma})\Delta)\\
&=\tilde{\kappa}([X^{\tilde{\H}},\bar{\sigma}^\V])+X^{\tilde{\H}}(\pai{e^0}{\sigma})\tilde{\cansec},
\end{align*}
where we have taken into account that $X^{\tilde{\H}}$ is homogeneous, and hence $[X^{\tilde{\H}},\Delta]=0$, and that $\tilde{\kappa}(\Delta)=\tilde{\cansec}$.

For a general horizontal vector field $H$, we can write $H=\sum_I f_I X_I^{\tilde{\H}}$ for some finite family $\{X_I\}$ of vector fields on $M$ and some functions $f_I\in\cinfty{\tilde{A}}$. Then
\[
[H,\sigma^\V]=\sum_If_I\Bigl([X_I^{\tilde{\H}},\bar{\sigma}^\V]+X_I^{\tilde{\H}}(\pai{e^0}{\sigma})\Delta\Bigr)-
\sum_I \sigma^V(f_I)X_I^{\tilde{\H}}.
\]
When applying $\tilde{\kappa}$, the last term vanishes, and we obtain
\[
\tilde{\kappa}([H,\sigma^\V])=\sum_If_I\tilde{D}_{X_I^{\tilde{\H}}}\sigma=\tilde{D}_{H}\sigma.
\]
Therefore we have
\[
\tilde{D}_{\tilde{U}}\sigma=\tilde\kappa\Bigl([P_{\,\tilde\H}(\tilde{U}),\bar{\sigma}^\V]+T\sigma(P_{\,\tilde\V}(\tilde{U}))\Bigr)+P_{\,\tilde\H}(\tilde{U})(\pai{e^0}{\sigma})\tilde\cansec.
\]

Finally, restricting to points in $A$, every vector field appearing in the right hand side of the above expression  is tangent to $A\subset\tilde{A}$, and we get~\eqref{linearization.affine.explicit}.  
\end{proof}

\subsection*{Coordinate expressions}
Taking a local frame as indicated above and the associated coordinates, and considering the local basis of vector fields 
\begin{equation}
H_i=\pd{}{x^i}-\Gamma^A_i(x,y)\pd{}{y^A},
\qquad\qquad
V_A=\pd{}{y^A},
\end{equation}
from equations~\eqref{coefficients.homogeneous} a straightforward calculation shows that
\begin{equation}
\begin{aligned}
D_{H_i}e_0&=(\Gamma^B_i-y^A\Gamma^B_{iA})e_B,\qquad
&D_{V_B}e_0&=0,\qquad\\
D_{H_i}e_A&=\Gamma^B_{iA}e_B,&
D_{V_B}e_A&=0,
\end{aligned}
\end{equation}
where as before we are using the notation $\Gamma^A_{iB}=\partial \Gamma^A_i/\partial y^B$. The coefficient $\Gamma^A_i-y^B\Gamma^A_{iB}$ will be denoted $\Gamma^A_{i0}$.

Let $U\in\vectorfields{A}$ be a vector field with local expression $U=U^iH_i+U^AV_A$. For a section $\sigma\in\sec{\pi^*\tilde{A}}$ with coordinate expression $\sigma=\sigma^0e_0+\sigma^Ae_A$ the covariant derivative of $\sigma$ has the expression
\begin{equation}
D_U\sigma=U(\sigma^0)e_0+\{U^i[H_i(\sigma^A)+\Gamma^A_{i0}\sigma^0+\Gamma^A_{iB}\sigma^B]+U^CV_C(\sigma^A)\}e_A.
\end{equation}
In particular, for a section $\bs{\zeta}\in\sec{\pi^*\bs{V}}$ with coordinate expression $\bs{\zeta}=\zeta^Ae_A$ the covariant derivative of $\bs{\zeta}$ has the expression 
\begin{equation}
D_U\bs{\zeta}=\{U^i[H_i(\zeta^A)+\Gamma^A_{iB}\zeta^B]+U^CV_C(\zeta^A)\}e_A,
\end{equation}
and for a section $\eta\in\sec{\pi^*A}$ with coordinate expression $\eta=e_0+\eta^Ae_A$ the covariant derivative of $\sigma$ has the expression
\begin{equation}
\label{connections.coefficients.affine}
D_U\sigma=\{U^i[H_i(\eta^A)+\Gamma^A_{i0}+\Gamma^A_{iB}\eta^B]+U^CV_C(\eta^A)\}e_A.
\end{equation}

\subsection*{The linearized connection is affine}
The word \textit{affine connection} applies to two different types of connections. On one hand a linear connection on the tangent bundle $\map{\tau_M}{TM}{M}$ is said to be an affine connection on the base manifold $M$. On the other, a nonlinear connection on an affine bundle such that its connection coefficients are affine functions is also said to be an affine connection on that affine bundle. It is in this second sense that we use the word affine in this paper.

An affine connection on an affine bundle $\map{\pi}{A}{M}$ is characterized by the existence of a covariant derivative $\bs{\nabla}$ on the vector bundle $\map{\bs{\pi}}{\bs{V}}{M}$ and an operator $\map{\nabla}{\vectorfields{M}\times\sec{A}}{\sec{\bs{V}}}$, which is $\cinfty{M}$-linear in the first argument and satisfies 
\[
\nabla_X(\sigma+\bs{\zeta})=\nabla_X\sigma+\bs{\nabla}_X\bs{\zeta},\qquad \forall \sigma\in\sec{A},\bs{\zeta}\in\sec{\bs{V}}.
\]
A (standard) covariant derivative $\tilde{\nabla}$ on $\tilde{A}$ can be defined such that $\tilde{\nabla}_X$ coincides with $\nabla_X$ on sections taking values in $A{\subset}\tilde{A}$. It can be shown that a covariant derivative $\bar{\nabla}$ on $\tilde{A}$ is the covariant derivative associated to an affine connection if and only if $\bar{\nabla}_Xe^0=0$ for all $X\in\vectorfields{M}$. See~\cite{con.affine.MeSaMa,con.affine.MeSa} for the details. 

Alternatively, with the terminology of this paper, an affine connection on an affine bundle $\map{\pi}{A}{M}$ is a nonlinear connection on $A$ such that its homogeneous extension to $\tilde{A}$ happens to be linear (i.e. smooth on $\tilde{A}$ including $\bs{V}$). 

In our case, given a nonlinear connection on $A$, the linearized connection $D$ is defined on the vector bundle $\map{\pr_1}{\pi^*\tilde{A}}{A}$. Equation~\eqref{connections.coefficients.affine} shows that the connection coefficients are affine functions indicating that $D$ is an affine connection on this bundle. We will give a direct proof of this fact.

\begin{proposition}
\label{De0=0}
The section $e^0\in\sec{\pi^*A^\dag}$ is parallel, 
\[
D_Ue^0=0,\qquad \forall U\in\vectorfields{A}.
\] 
The vector subbundle $\pi^*\bs{V}\subset\pi^*\tilde{A}$ and the affine subbundle $\pi^*A\subset\pi^*\tilde{A}$ are invariant under parallel transport, 
\begin{equation}
\begin{aligned}
&D_U\sec{\pi^*\bs{V}}\subset\sec{\pi^*\bs{V}},&&\forall U\in\vectorfields{A}\\
&D_U\sec{\pi^*A}\subset\sec{\pi^*\bs{V}},&&\forall U\in\vectorfields{A}.
\end{aligned}
\end{equation} 
\end{proposition}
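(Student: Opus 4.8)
The plan is to prove that $e^0$ is parallel and to read off the two invariance statements along the way, using the canonical decomposition~\eqref{canonical.decomposition}, the explicit formula~\eqref{linearization.affine.explicit} for $D$, and Proposition~\ref{derivative.of.cansec}. The organizing observation is that $e^0=j$ is intrinsically defined, so $\pai{e^0}{\sigma}=j\circ\sigma$ is the ``weight'' of a section $\sigma\in\sec{\pi^*\tilde{A}}$: a section lies in $\pi^*\bs{V}$ exactly when its weight vanishes, and in $\pi^*A$ exactly when its weight is the constant $1$. Since parallel transport is generated by $D$, the displayed infinitesimal statements $D_U\sec{\pi^*\bs{V}}\subset\sec{\pi^*\bs{V}}$ and $D_U\sec{\pi^*A}\subset\sec{\pi^*\bs{V}}$ are what we actually have to prove.

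First I would establish the invariance of $\pi^*\bs{V}$ (and simultaneously of $\pi^*A$) directly from Proposition~\ref{linearization.explicit}. If $\sigma\in\sec{\pi^*\tilde{A}}$ has constant weight $\pai{e^0}{\sigma}$, the term $P_\H(U)(\pai{e^0}{\sigma})\cansec$ in~\eqref{linearization.affine.explicit} vanishes, leaving $D_U\sigma=\kappa\bigl([P_\H(U),\bar\sigma^\V]+T\sigma(P_\V(U))\bigr)$. The step that requires care, and which I expect to be the main obstacle, is identifying the $\kappa$ occurring here after restriction to $A$: at a point of $A$ the homogeneous horizontal space reduces to $\Hor\subset TA$, so the vector field to which $\kappa$ is applied is tangent to $A$, and on vectors tangent to $A$ the map $\tilde\kappa$ coincides with the connector $\kappa$ of the original nonlinear connection on the affine bundle $A$; by the very definition of an affine bundle this connector is the left inverse of $\map{\xi^\V}{\pi^*\bs{V}}{\Ver A}$ and hence takes values in $\pi^*\bs{V}$. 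Consequently $D_U\sigma\in\sec{\pi^*\bs{V}}$ whenever $\sigma$ has constant weight, which gives at once $D_U\sec{\pi^*\bs{V}}\subset\sec{\pi^*\bs{V}}$ (weight $0$) and $D_U\sec{\pi^*A}\subset\sec{\pi^*\bs{V}}$ (weight $1$).

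It then remains to show $D_Ue^0=0$, that is, $\pai{e^0}{D_U\sigma}=U(\pai{e^0}{\sigma})$ for every $\sigma$. Writing $\sigma=\pai{e^0}{\sigma}\cansec+\bar\sigma$ with $\bar\sigma\in\sec{\pi^*\bs{V}}$, the Leibniz rule gives
\[
\pai{e^0}{D_U\sigma}=U(\pai{e^0}{\sigma})\,\pai{e^0}{\cansec}+\pai{e^0}{\sigma}\,\pai{e^0}{D_U\cansec}+\pai{e^0}{D_U\bar\sigma}.
\]
Now $\pai{e^0}{\cansec}=j(\cansec)=1$; the second term vanishes since $D_U\cansec=\kappa(U)\in\sec{\pi^*\bs{V}}$ by Proposition~\ref{derivative.of.cansec}; and the last term vanishes by the invariance of $\pi^*\bs{V}$ just proved. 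Hence $\pai{e^0}{D_U\sigma}=U(\pai{e^0}{\sigma})$, i.e. $D_Ue^0=0$. Apart from the identification of the connector in the second paragraph, everything is bookkeeping. (Alternatively one may simply note from~\eqref{connections.coefficients.affine} that $D$ never produces an $e_0$-component — $D_{H_i}e_0=\Gamma^B_{i0}e_B$, $D_{H_i}e_A=\Gamma^B_{iA}e_B$, $D_{V_B}e_0=D_{V_B}e_A=0$ — and deduce all three assertions simultaneously; but the coordinate-free argument above makes the role of the affine structure transparent.)
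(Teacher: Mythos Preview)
Your proof is correct and takes a genuinely different route from the paper's. The paper works on $\tilde{A}$: it uses the intrinsic identity $e^0\circ\tilde\kappa=dj$ together with the fact that the homogeneous horizontal distribution is tangent to the hyperplane foliation to show $\tilde{D}_Ue^0=0$ directly, and only afterwards reads off the invariance of the hyperplanes $A_\lambda$ (hence of $\pi^*\bs{V}$ and $\pi^*A$). You reverse the order --- invariance first, then $D_Ue^0=0$ --- and stay entirely on $A$, exploiting the explicit formula~\eqref{linearization.affine.explicit} and Proposition~\ref{derivative.of.cansec}. Your approach is more modular (it reuses results already established) and makes transparent that the invariance is nothing but the fact that the affine connector on $A$ takes values in $\bs{V}$; the paper's approach is more conceptual, isolating $e^0\circ\tilde\kappa=dj$ as the geometric core.

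One small point worth tightening in your second paragraph: the claim that ``the vector field to which $\kappa$ is applied is tangent to $A$'' is accurate for the bracket term $[P_\H(U),\bar\sigma^\V]$, but not literally for $T\sigma(P_\V(U))$, since a section $\sigma$ of constant weight $\lambda$ maps $A$ into the hyperplane $A_\lambda$, which equals $A$ only when $\lambda=1$. What you actually need there is that $T\sigma(P_\V(U))$ is vertical in $T\tilde{A}$ and tangent to $A_\lambda$, so that $\tilde\kappa$ (on vertical vectors just the inverse of $\xi^\V_{\tilde A}$) sends it into $\bs{V}$. This is immediate, your conclusion stands, and the coordinate alternative you mention confirms it.
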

\begin{proof}
An important fact to be noticed is that the differential 1-form $e^0\circ\tilde{\kappa}\in\sec{T^*\tilde{A}}$ is the exterior differential of the function $j$, that is, $e^0\circ\tilde{\kappa}=dj$.

We first prove that $\tilde{D}_Ue^0=0$ for all $U\in\vectorfields{\tilde{A}-\bs{V}}$. For every $\sigma\in\sec{\pi^*\tilde{A}}$ we have
\begin{align*}
\pai{\tilde{D}_Ue^0}{\sigma}
&=U(\pai{e^0}{\sigma})-\pai{e^0}{\tilde{D}_U\sigma}\\
&=U(\pai{e^0}{\sigma})-\pai{dj}{[\tilde{P}_\H(U),\sigma^\V]}-\pai{dj}{T\sigma(\tilde{P}_\V(U))}.
\end{align*}
The second term in this sum is 
\[
\pai{dj}{[\tilde{P}_\H(U),\sigma^\V]}
=\tilde{P}_\H(U)(\sigma^\V(j))-\sigma^\V(\tilde{P}_\H(U)(j))
=\tilde{P}_\H(U)(\pai{e^0}{\sigma}),
\]
where we have used $\tilde{P}_\H(U)(j)=0$ (the horizontal distribution is tangent to the hyperplane foliation) and $\sigma^\V(j)=\pai{dj}{\sigma^\V}=\pai{e^0\circ\kappa}{\sigma^\V}=\pai{e^0}{\kappa(\sigma^\V)}=\pai{e^0}{\sigma}$.
The third term in the above sum is 
\[
\pai{dj}{T\sigma(\tilde{P}_\V(U))}
=\tilde{P}_\V(U)(j\circ\sigma)
=\tilde{P}_\V(U)(\pai{e^0}{\sigma}).
\]
Thus 
\[
\pai{\tilde{D}_Ue^0}{\sigma}
=U(\pai{e^0}{\sigma})-\tilde{P}_\H(U)(\pai{e^0}{\sigma})-\tilde{P}_\V(U)(\pai{e^0}{\sigma})=0.
\]

From this property, restricting to points in $A\subset\tilde{A}$ we get $D_Ue^0=0$ for all $U\in\vectorfields{A}$. 

But also from that property it immediately follows that the covariant derivative of a section taking values on an hyperplane $A_\lambda$ is a section taking values in $\bs{V}$. The result follows by taking $\lambda=1$ for $A$ and $\lambda=0$ for $\bs{V}$, and, of course, restricting to points in $A$. 
\end{proof}

Due to the linearity in the first argument of the covariant derivative, the above properties can be equivalently stated as: $D_ue^0=0$  for all $u\in TA$; $D_u{\eta}\in \bs{V}_{{\pi}({\tau}_E(u))}$ for all ${\eta}\in \sec{{\pi}^*\bs{V}}$ and all $u\in TA$; and $D_u{\sigma}\in \bs{V}_{{\pi}({\tau}_E(u))}$ for all ${\sigma}\in\sec{{\pi}^*A}$ and all $u\in TA$.

\begin{remark}
Proposition~\ref{De0=0} establishes that the linearization of a nonlinear connection is an affine connection on the affine bundle $\map{\pr_1}{\pi^*A}{A}$. Moreover, when the initial nonlinear connection on $A$ is already affine with $(\nabla,\bs{\nabla})$ the associated covariant operators, then the restriction of $D$ to $\pi^*\bs{V}$ is the pullback by $\pi$ of the linear connection $\bs{\nabla}$, the restriction of $D$ to $\pi^*A$ is the pullback by $\pi$ of the affine operator $\nabla$, and the connection $\tilde{D}$ is the pullback by $\tilde{\pi}$ of the linear connection $\tilde{\nabla}$.
\end{remark}

\bigskip

To finish this section, it is worth to mention that, in the affine case, parallel transport is but the parallel transport defined by the linearization of the homogeneous connection. Parallel transport in $\tilde{A}$ preserves the foliation by hyperplanes: if $\gamma$ is a curve in $A$ and $z$ is a point in $A_\lambda$ then the parallel transport of $z$ along $\gamma$ remains in $A_\lambda$. Therefore the restriction of parallel transport to the affine subbundle $\pi^*A_\lambda\to A$ is an affine map for $\lambda\neq0$ and a linear map for $\lambda=0$. In particular, for $\lambda=1$ parallel transport in $\pi^*A\to A$ is an affine map whose associated linear map is the parallel transport in $\pi^*\bs{V}=\pi^*A_0\to A$.    

\section{Examples and applications}
\label{examples.applications}

We consider in this section the cases of a connection on a tangent bundle and on a first jet bundle, with special emphasis in connections defined by a \sode. We will show how the results in this paper cover those in our previous papers~\cite{linearizable.SODE, Bianchi, MikeFest} and we will give simpler proofs of some of such results. We postpone for the next section the analysis of connections on a cotangent bundle

\subsection*{Connections on the tangent bundle}
Inspired by our previous work on derivations of forms along the tangent bundle projection~\cite{deriv,deriv2}, for an Ehresmann connection on the tangent bundle we defined in~\cite{linearizable.SODE} the linear connection on $\map{\pr_1}{\tau_M^*TM}{TM}$ with covariant derivative
\begin{equation}
D_{U}Z=\kappa([P_\H(U),Z^\V])+T\tau_M([P_\V(U),Z^\H]),
\end{equation}
for $U\in\vectorfields{TM}$, $Z\in\sec{\tau_M^*TM}$. In that paper the notation $P_\V(U)^\downarrow$ was used instead of $\kappa(U)$. We now show that this connection is a particular case of the one given in this paper by showing that both expressions coincide whenever $E=TM$. 

We just have to show that $T\tau_M\circ[P_\V(U),Z^\H]=\kappa(TZ(P_\V(U))$, or equivalently, $T\tau_M([V,Z^\H])=\kappa(TZ(V))$ for any vertical vector field $V\in\vectorfields{TM}$, or equivalently that $S([V,Z^\H])=TZ(V)$, where $S$ is the vertical endomorphism of~$TTM$, $S=\xi^\V\circ p_{TM}$. In local coordinates $(x^i,v^i)$ on $TM$, if $V=V^i(x,v)\pd{}{v^i}$ and $Z=Z^i(x,v)\pd{}{x^i}$ we have 
\[
TZ(V)=V^i\pd{Z^j}{v^i}\pd{}{v^j},
\]
and on the other hand 
\[
[V,Z^\H]=V^k\pd{Z^i}{v^k}\left(\pd{}{v^i}-\Gamma^j_i\pd{}{v^j}\right)-Z^iV^k\pd{\Gamma^j_i}{v^k}\pd{}{v^j}-Z^iH_i(V^k)\pd{}{v^k},
\]
so that 
$
S([V,Z^\H])=V^i\pd{Z^j}{v^i}\pd{}{v^i}
$,
and both expressions coincide.

As a consequence of this fact, the connection $D$ itself is compatible (satisfies the condition~\eqref{natural.connection}) and we can use $\bs{D}=D$ in order to calculate Bianchi identities as expressed in~\eqref{Bianchi.identities}.

\medskip

The aim of~\cite{linearizable.SODE} was to characterize those \sode on $M$ which are linear in velocities or linear in all variables. We briefly review here part of this results. 

The canonical section in $\tau_M^*TM$ coincides with the total time derivative operator $\Ti$ with coordinate expression 
\[
\cansec_{TM}=\Ti=v^i\pd{}{x^i}.
\]

A \sode vector field  is a vectorfield $\Gamma\in\vectorfields{TM}$ which projects to $\Ti$. It has a local expression 
\begin{equation}
\Gamma=v^i\pd{}{x^i}+f^i(x,v)\pd{}{v^i}.
\end{equation}
It determines a nonlinear connection with connection coefficients
\begin{equation}
\Gamma^i_j=-\frac{1}{2}\pd{f^i}{v^j}.
\end{equation}
The details can be found in~\cite{Crampin.nonlinear.connection, Grifone}.

A key property of the connection defined by a \sode is that it is symmetric, in the sense that the connection coefficients satisfy the symmetry condition $\Gamma^i_{jk}=\Gamma^i_{kj}$, or in more intrinsic terms the torsion vanishes,
\[
[X,Y]=D_{X^\H}Y-D_{Y^\H}X, \qquad\forall X,Y\in\vectorfields{M}. 
\]
In fact, any nonlinear connection satisfying this property is the connection defined by a \sode.

\medskip

As a consequence we have the following fundamental property for \sode connections.
\begin{proposition}
If a distribution $\cald\subset\pi^*(TM)$ is parallel then it is basic and the base distribution is integrable.
\end{proposition}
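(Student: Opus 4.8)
The plan is to combine two facts established earlier: the characterization of basic distributions in Proposition~\ref{characterization.basic}, and the vanishing of the torsion of the \sode connection, which is precisely the symmetry property $[X,Y]=D_{X^\H}Y-D_{Y^\H}X$ for $X,Y\in\vectorfields{M}$. Throughout, recall that for the tangent bundle the auxiliary connection $\bs{D}$ can be taken equal to $D$ itself, as observed just above, so there is no notational ambiguity in the torsion-free identity.

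First I would prove that $\cald$ is basic. By assumption $\cald$ is parallel, meaning it is invariant under the parallel transport of the linearized connection $D$. In particular it is invariant under parallel transport along vertical curves, which by Theorem~\ref{parallel.transport} is the standard complete parallelism on the fibres; equivalently, by the distribution version of Proposition~\ref{characterization.basic}, $D_{\eta^\V}\cald\subset\cald$ for all $\eta\in\sec{\pi^*TM}$. Since $\cald$ is parallel for all directions, a fortiori this holds, and Proposition~\ref{characterization.basic} gives that $\cald=\tau_M^*F$ for a subbundle $F\subset TM$. (More concretely, $F$ is recovered as $F(m)=\pr_2(\cald(0_m))$, using that parallel transport along vertical curves identifies $\cald(a)$ with $\cald(0_m)$ for all $a\in T_mM$.)

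Next I would show $F$ is involutive, hence integrable by Frobenius. Take two sections $X,Y\in\sec{F}\subset\vectorfields{M}$. Since $\cald=\tau_M^*F$ is also parallel along horizontal curves, the basic version of Proposition~\ref{characterization.basic} combined with horizontal parallel transport gives that $\cald$ is invariant under $D$ in horizontal directions as well; concretely, for $Z\in\sec{\pi^*F}$ and any $W\in\vectorfields{TM}$ we have $D_W Z\in\sec{\cald}$ — this is exactly the statement that $\cald$ is a parallel subbundle. Applying this to $W=X^\H$ and $W=Y^\H$ with the basic sections $Y$ and $X$, we get $D_{X^\H}Y\in\sec{\cald}$ and $D_{Y^\H}X\in\sec{\cald}$. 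Now invoke the torsion-free identity for the \sode connection:
\[
[X,Y]=D_{X^\H}Y-D_{Y^\H}X.
\]
The right-hand side is a section of $\cald=\tau_M^*F$, and it is basic (being $[X,Y]\circ\tau_M$ for the basic vector fields $X,Y$), so $[X,Y]\in\sec{F}$. Thus $F$ is involutive, and by the Frobenius theorem it is integrable; the base distribution in the statement is precisely $F$.

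The main obstacle is the bookkeeping about what "parallel distribution" is being used for, and which form of Proposition~\ref{characterization.basic} applies at each step: one needs the vertical-direction invariance to conclude $\cald$ is basic, and then separately the horizontal-direction invariance (i.e. that $D_{X^\H}Z$ stays in $\cald$ for $Z\in\sec{\cald}$) to push the symmetry identity through. Both are consequences of $\cald$ being parallel in the full sense, but it is worth being explicit that "parallel" here means invariant under $D_U$ for \emph{all} $U\in\vectorfields{TM}$, not merely along a distinguished curve. Once that is spelled out, the torsion-free property of the \sode connection does all the remaining work with no computation.
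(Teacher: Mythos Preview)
Your proof is correct and follows essentially the same route as the paper: invoke Proposition~\ref{characterization.basic} to get $\cald=\tau_M^*F$ from parallelism in the vertical directions, then use the torsion-free identity $[X,Y]=D_{X^\H}Y-D_{Y^\H}X$ together with parallelism in the horizontal directions to conclude that $F$ is involutive. The paper's argument is just a more compressed version of what you wrote, omitting the explicit bookkeeping about which part of ``parallel'' is used where.
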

\begin{proof}
As a consequence of Proposition~\ref{characterization.basic}, if $\cald$ is parallel then it is basic $\cald=\pi^*F$, for some subbundle $F\subset TM$. Moreover the bracket of two vector fields on $F$ is $[X,Y]=D_{X^\H}Y-D_{Y^\H}X$. As both $D_{X^\H}Y$ and $D_{Y^\H}X$ take values in $F$, we have that $[X,Y]$ takes also values on $\cald$. Thus $F$ is involutive and hence integrable.
\end{proof}

Another geometric object associated to the \sode $\Gamma$ is the (generalized) Jacobi endomorphism $\Phi$ which is a (1,1)-tensor field along the tangent bundle projection defined by $\Phi(X)=\kappa([\Gamma,X^\H])$ for every $X\in\sec{\tau_M^*TM}$. It has components 
\begin{equation}
\label{Jacobi.endomorphism}
\Phi^i_j=-\pd{f^i}{x^j}-\Gamma^i_k\Gamma^k_j-\Gamma(\Gamma^i_j).
\end{equation}
See~\cite{deriv2} for the definition and~\cite{IP.autonomo} for applications.

\begin{proposition}
Let $\Gamma\in\vectorfields{TM}$ be a \sode. There exists coordinates $(x^i)$ on $M$ such that the differential equation for the integral curves of $\Gamma$ is
\begin{itemize}
\item linear in velocities, i.e.
\[
\ddot{x}^i=A^i_j(x)v^j+b^i(x),
\]
if and only if the linearized connection $D$ is flat. The coefficients $A^i_j$ are constant if and only if the tension is parallel. 
\item linear in all variables, i.e. 
\[
\ddot{x}^i=A^i_jv^j+B^i_jx^j+c^i
\]
if and only if the linearized connection $D$ is flat and the Jacobi endomorphism $\Phi$ is parallel. 
\end{itemize}
\end{proposition}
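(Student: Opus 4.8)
The plan is to treat the two bulleted statements separately. In each case the implication ``normal form in coordinates $\Rightarrow$ invariant condition'' is a direct computation in an adapted chart, while the converse requires first descending the linearized connection to the base manifold and then constructing the coordinates. Throughout I use that the \sode connection is symmetric, its coefficients are $\Gamma^i_j=-\tfrac12\,\partial f^i/\partial v^j$, and the coordinate formulas already established for the curvature, tension and Jacobi endomorphism.

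For the first item, forward direction: if in some chart $f^i=A^i_j(x)v^j+b^i(x)$, then $\Gamma^i_j=-\tfrac12 A^i_j(x)$ is independent of the velocities, so $\Gamma^i_{jk}=0$; the coordinate expression of the curvature of $D$ then gives that the $\mathsf{VH}$-component vanishes, and since $R^B_{ij}=H_j(\Gamma^B_i)-H_i(\Gamma^B_j)$ is now also velocity-independent we get $V_A(R^B_{ij})=0$, so the $\mathsf{HH}$-component vanishes too; together with the always-vanishing $\mathsf{VV}$-component this shows $D$ is flat. For the converse, assume $D$ flat. Its $\mathsf{VH}$-component vanishes, so by Proposition~\ref{interpretation.VH} $D$ is the pullback by $\tau_M$ of a linear connection $\nabla$ on $M$, and $\curv^\nabla(X,Y)\sigma=\curv(X^\H,Y^\H)\sigma=0$ forces $\nabla$ to be flat; since the \sode connection is symmetric, $[X,Y]=D_{X^\H}Y-D_{Y^\H}X=(\nabla_XY-\nabla_YX)\circ\tau_M$ for $X,Y\in\vectorfields{M}$, so $\nabla$ is torsion-free. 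A flat torsion-free linear connection on $M$ admits, locally, a coordinate system $(x^i)$ with vanishing Christoffel symbols; in such a chart $D_{H_i}e_j=0$, hence $\partial\Gamma^k_i/\partial v^j=0$, so $\Gamma^k_i=\Gamma^k_i(x)$ and $f^i=-2\Gamma^i_j(x)v^j+b^i(x)$ is linear in velocities with $A^i_j=-2\Gamma^i_j$. For the refinement: in such a chart the tension has components $\Gamma^k_i=-\tfrac12 A^k_i(x)$, it is basic, and $D$ is trivial, so $D\bs{t}=0$ iff these components are constant; conversely, if $A^i_j$ is constant in some linear-in-velocities chart then $\Gamma^i_j$ is constant there, $\nabla$ is already trivial, and $\bs{t}$ has constant components, hence is parallel.

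For the second item: if $f^i=A^i_jv^j+B^i_jx^j+c^i$ with constant coefficients, then $\Gamma^i_j=-\tfrac12 A^i_j$ is constant, so $R=0$ and $D$ is flat by the first item, while \eqref{Jacobi.endomorphism} gives $\Phi^i_j=-B^i_j-\tfrac14 A^i_kA^k_j$, which is constant, so $\Phi$ is parallel since $D$ is trivial in these coordinates. Conversely, assume $D$ flat and $\Phi$ parallel. By the first item there is a chart in which $f^i=A^i_j(x)v^j+b^i(x)$ and $D$ is trivial. Parallelism of $\Phi$ gives in particular $D_{\eta^\V}\Phi=0$, so by the basicness criterion of Proposition~\ref{characterization.basic} $\Phi$ is independent of $v$. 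Expanding \eqref{Jacobi.endomorphism} with $f^i=A^i_j(x)v^j+b^i(x)$, the coefficient of $v^k$ in $\Phi^i_j$ is $\tfrac12\,\partial A^i_j/\partial x^k-\partial A^i_k/\partial x^j$; vanishing of this forces $\partial A^i_j/\partial x^k=2\,\partial A^i_k/\partial x^j$, and swapping $j$ and $k$ and combining yields $\partial A^i_j/\partial x^k=0$. Thus $A^i_j$ is constant, $\Phi^i_j=-\partial b^i/\partial x^j-\tfrac14 A^i_kA^k_j$, and parallelism of $\Phi$ in the trivial chart then gives $\partial^2 b^i/\partial x^j\partial x^k=0$, so $b^i=B^i_jx^j+c^i$ and the \sode is linear in all variables.

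The main obstacle is the converse of the first item: turning flatness of $D$, a connection on the total space $TM$, into the existence of adapted coordinates on the base $M$. This rests on three facts: vanishing of the $\mathsf{VH}$-component lets $D$ descend to an honest linear connection $\nabla$ on $M$ (Proposition~\ref{interpretation.VH}); torsion-freeness of $\nabla$ is inherited from the symmetry of the \sode connection; and a flat torsion-free linear connection is locally trivialized by a coordinate frame. Once this bridge is in place, the rest — including the double-swap of indices that forces $A$ to be constant as soon as $\Phi$ is basic — is routine.
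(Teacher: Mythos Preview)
Your proof is correct. The paper itself does not supply a proof of this proposition but defers to~\cite{linearizable.SODE}; your argument is precisely in the spirit the present paper intends, since it leverages the machinery developed here --- Proposition~\ref{interpretation.VH} to descend $D$ to a flat torsion-free $\nabla$ on $M$, and Proposition~\ref{characterization.basic} to conclude basicness of $\Phi$ --- rather than working directly with frames as in the original reference. The index-swap trick that forces $\partial A^i_j/\partial x^k=0$ from basicness of $\Phi$ is clean and correct, as is the subsequent step that $D$-parallelism of $\Phi$ in the trivial chart reduces to $\partial^2 b^i/\partial x^j\partial x^k=0$.
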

The details of the proof are in~\cite{linearizable.SODE}

Further applications of the linear connection to the Inverse problem of Lagrangian Mechanics can be found in~\cite{Douglas, IP.autonomo}. Also application to the decoupling of second-order differential equations can be found in~\cite{decoupling}. The problem of decoupling (for non-autonomous \sode{}s) will be reviewed in the next subsection.

\subsection*{Connections on the first jet bundle}
The linearization of a connection was extended to the non-autonomous setting in~\cite{Bianchi}. The non-autonomous formalism is developed in the first jet bundle $\map{\pi=\tau_{10}}{J^1\tau}{M}$ to a bundle $\map{\tau}{M}{\Real}$. The bundle $J^1\tau$ is an affine bundle modeled on the vector bundle $\Ver(\tau)=\Ker(T\tau)\subset TM$, and therefore the theory developed in Section~\ref{linearization.affine} applies to this case. Indeed, with the notation in this paper the connection on the the bundle $\map{\pr_1}{\pi^*(TM)}{J^1\tau}$ defined in~\cite{Bianchi} reads
\begin{equation}
D_U\sigma=\kappa([P_\H(U),\bar{\sigma}^\V])  
+T\pi([P_\V(U),\sigma^\H]) 
+P_\H(U)(\langle\sigma,dt\rangle)\Ti.
\end{equation}
As in the autonomous case the term $T\pi([P_\V(U),\sigma^\H])$ can be easily seen to be equal to $\kappa(T\sigma(P_\V(U)))$, so that this connection coincides with the one given in Proposition~\ref{linearization.explicit}. 

In~\cite{Bianchi} the linear connection was defined by looking for an expression similar to the one in the  autonomous case~\cite{linearizable.SODE} and the extra term $P_\H(U)(\langle\sigma,dt\rangle)\Ti$ was introduced to satisfy all the requirements of being a connection, but there was some freedom in the choice. We have seen in this paper that this is the natural choice. 

We will denote by $t$ the canonical coordinate in $\Real$, and we will take local coordinates $(t,x^i)$ on $M$ adapted to the projection $\tau$. They induce coordinates $(t,x^i,v^i)$ in $J^1\pi$. In our local expressions we will only use this kind of coordinates.

On $J^1\tau$ the canonical section coincides with the non-autonomous total time derivative operator $\cansec_{J^1\tau}=\Ti$ with coordinate expression 
\[
\Ti=\pd{}{t}+v^i\pd{}{x^i}.
\] 
A non-automous \sode $\Gamma$ is a vector field $\Gamma\in\vectorfields{J^1\tau}$ which projects to the canonical section. Its integral curves are 1-jet prolongation of section of $\tau$ and has a local expression of the form
\[
\Gamma=\pd{}{t}+v^i\pd{}{x^i}+f^i(t,x,v)\pd{}{v^i}.
\]

A \sode $\Gamma$ defines a nonlinear connection on the affine bundle $\map{\pi}{J^1\tau}{M}$ with connection coefficients given by 
\begin{equation}
\label{connection.coefficients.non-autonomous}
\Gamma^i_0=\frac{1}{2}v^k\pd{f^i}{v^k}-f^i
\qquad\text{and}\qquad
\Gamma^i_j=-\frac{1}{2}\pd{f^i}{v^j}.
\end{equation}
An intrinsic definition in terms of the vertical endomorphism can be found in~\cite{Crampin.connection.non-autonomous}. 

To define the associated linear connection as indicated in the previous section we should find its homogeneous extension and then linearize it. However, alternatively, the homogeneous extension of such nonlinear connection can be directly obtained as the nonlinear connection defined by the autonomous \sode $\Gamma_h\in\vectorfields{TM-\Ver(\tau)}$ known as the homogeneous \sode-extension of $\Gamma$. In local coordinates $(t,x^i,w^0,w^i)$ in $TM$ the homogeneous \sode $\Gamma_h$ has the expression 
\begin{equation}
\label{homogeneous.sode}
\Gamma_h=w^0\pd{}{t}+w^i\pd{}{x^i}+(w^0)^2f^i\bigl(t,x^j,w^j/w^0\bigr)\pd{}{w^i},
\end{equation}
from where~\eqref{connection.coefficients.non-autonomous} follows immediately.

In more intrinsic terms, the homogeneous \sode $\Gamma_h$ can be defined as follows. For a point $v_0\in J^1\tau$ we set $t_0=\tau(\pi(v_0))$. Let $\map{\gamma}{\Real}{M}$ be the integral section of $\Gamma$ through the point $v_0$, so that $\Gamma\circ j^1\gamma=\frac{d}{dt}j^1\gamma$ and $j^1_{t_0}\gamma=v_0$. For $\lambda\in\Real-\{0\}$ we define the curve $\map{\eta}{\Real}{M}$ by $\eta(s)=\gamma(t_0+\lambda s)$. Then the value of $\Gamma_h\in\vectorfields{TM-\Ver(\tau)}$ at the point $z=\lambda v_0\in TM$ is defined by 
\[
\Gamma_h(z)=\frac{d}{ds}\left(\frac{d\eta}{ds}\right)\at{s=0}.
\]
In this way the integral curves of $\Gamma_h$ are reparameterization of the integral curves of $\Gamma$ with constant speed $\lambda$ in the variable~$t$.

The vector field $\Gamma_h$ is homogeneous but, in general, it is not defined on the submanifold $\Ver(\tau)\subset TM$. Therefore it is a homogeneous spray. In fact $\Gamma_h$ can be extended continuously to $\Ver(\tau)$ if and only if the functions $f^i(t,x,v)$ are polynomials of degree two in the variables $v$. As a consequence the tension of this connection vanishes.

The benefits of this approach are clear: being the restriction to the affine bundle $J^1\tau$ of an autonomous \sode connection, every property that we have seen for the autonomous case applies also to the non-autonomous case, with some simplifications due to the fact that $\Gamma_h$ is a homogeneous spray, and hence $\Gamma$ is the horizontal lift of the canonical section $\Gamma=\Ti^\H$.

Due to the fact that $\Gamma$ is horizontal, in the non-autonomous setting the Jacobi endomorphism is part of the curvature tensor
\begin{equation}
\Phi(X)=\kappa([\Gamma,X^\H])=R(\Ti,X),
\qquad X\in\sec{\pi^*\Ver(\tau)},
\end{equation}
and has the same coordinate expression~\eqref{Jacobi.endomorphism} as in the autonomous case (but now all functions depend also on the variable $t$). In principle, one can consider the action of $\Phi$ on the full bundle $\pi^*TM$, but we have restricted to the vertical bundle because for $X=\Ti$ we clearly have $\kappa([\Gamma,\Ti^\H])=\kappa([\Gamma,\Gamma])=0$, which carries no information.

As an application we consider the problem of decoupling a second-order differential equation. We say that a \sode $\Gamma$ is locally submersive if in a neighborhood of every point in $M$ there exists a coordinate system $(t,x^i,x^\alpha)$ on $M$ such that the differential equation for the integral curves of $\Gamma$ has the form 
\[
\left\{\begin{aligned}
&\ddot{x}^i=f^i(t,x^j,\dot{x}^j),&& i,j=1,\ldots,d\\
&\ddot{x}^\alpha=f^\alpha(t,x^j,x^\beta,\dot{x}^j,\dot{x}^\beta),\qquad&& \alpha,\beta=d+1,\ldots,n.
\end{aligned}\right.
\]
We will say that $\Gamma$ locally decouples if furthermore such equations are of the form
\[
\left\{\begin{aligned}
&\ddot{x}^i=f^i(t,x^j,\dot{x}^j),&& i,j=1,\ldots,d\\
&\ddot{x}^\alpha=f^\alpha(t,x^\beta,\dot{x}^\beta),\qquad&& \beta=d+1,\ldots,n.
\end{aligned}\right.
\]

Notice that our coordinate systems are always assumed to be adapted to the fibration $\map{\tau}{M}{\Real}$. That is, we only consider coordinate transformations of the form $\bar{t}=t$, $\bar{x}^i=\psi^i(t,x^j)$.
 
We say that a distribution $\cald$ along $\pi$ is vertical if $\cald\subset\pi^*(\Ver(\tau))$. 

\begin{theorem}
Let $\Gamma\in\vectorfields{J^1\tau}$ be a \sode. Then  
\begin{itemize}
\item $\Gamma$ is locally submersive if and only if there exists a non-trivial $\Phi$-invariant parallel vertical distribution.
\item $\Gamma$ locally decouples if and only if there exists a pair of non-trivial $\Phi$-invariant parallel complementary vertical distributions.
\end{itemize}
\end{theorem}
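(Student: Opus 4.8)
The plan is to turn each distribution hypothesis into a statement about an honest integrable subbundle of $\Ver(\tau)$ over $M$, pass to coordinates adapted to $\tau$ and to the associated foliation(s), and then read off the normal form from the coordinate expressions of the connection coefficients and of the Jacobi endomorphism $\Phi$. First, let $\cald$ be a non-trivial parallel $\Phi$-invariant vertical distribution. Being parallel for the linearized connection it satisfies $D_{\eta^\V}\cald\subset\cald$ for every $\eta$, so by Proposition~\ref{characterization.basic}, applied inside the vector bundle $\pi^*\Ver(\tau)\to J^1\tau$ (on which the restriction of $D$ still produces complete parallelism along the vertical fibres, since $D_{V_A}e_B=0$), the distribution is basic: $\cald=\pi^*F$ for a subbundle $F\subset\Ver(\tau)$. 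Since the connection of a \sode is symmetric, $[X,Y]=D_{X^\H}Y-D_{Y^\H}X$ for vector fields $X,Y$ on $M$; taking $X,Y\in\sec{F}$ and using that $D_{X^\H}Y$ and $D_{Y^\H}X$ lie in $\cald=\pi^*F$ by parallelism shows that $[X,Y]$ is a basic section of $\pi^*F$, hence a section of $F$. Thus $F$ is involutive, and as $F\subset\Ver(\tau)=\Ker(T\tau)$ its leaves sit inside the fibres of $\tau$; Frobenius then provides local coordinates $(t,x^i,x^\alpha)$, adapted to $\tau$, with $F=\langle\partial/\partial x^\alpha\rangle$, and $1\le d\le n-1$ by non-triviality.

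Next I would check that, in these coordinates, $\cald=\pi^*\langle\partial/\partial x^\alpha\rangle$ is parallel and $\Phi$-invariant precisely when $\partial f^i/\partial x^\alpha=0=\partial f^i/\partial v^\alpha$ for all $i\le d$, which is exactly the submersive form $\ddot x^i=f^i(t,x^j,\dot x^j)$. From $\Gamma^i_j=-\frac{1}{2}\,\partial f^i/\partial v^j$ and $\Gamma^i_0=\frac{1}{2}\,v^k\,\partial f^i/\partial v^k-f^i$, parallelism of $\pi^*F$ is equivalent to the vanishing, for $j\le d$, of the mixed connection coefficients $\partial\Gamma^j_l/\partial v^\alpha$ in the space directions and $\partial\Gamma^j_0/\partial v^\alpha$ in the time direction: the former give $\partial^2 f^j/\partial v^l\partial v^\alpha=0$, and, feeding this into the latter, one is left with $\partial f^j/\partial v^\alpha=0$. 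With this in hand, the only surviving term in $\Phi^j_\alpha$ (see~\eqref{Jacobi.endomorphism}) is $-\partial f^j/\partial x^\alpha$, so $\Phi$-invariance, i.e. $\Phi^j_\alpha=0$, yields $\partial f^j/\partial x^\alpha=0$. The converse is this computation read backwards, and the resulting distribution is non-trivial.

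For the decoupling statement I would apply this reduction to each of the two complementary distributions, getting $\cald_1=\pi^*F_1$ and $\cald_2=\pi^*F_2$ with $F_1,F_2$ involutive and $F_1\oplus F_2=\Ver(\tau)$. Two complementary involutive subbundles of $\Ver(\tau)$ admit a common adapted chart: along the fibres of $\tau$ the map formed by $t$ and the two leaf-space submersions has kernel $F_1\cap F_2=0$, hence is a local diffeomorphism, giving coordinates $(t,x^i,x^\alpha)$ with $F_1=\langle\partial/\partial x^\alpha\rangle$ and $F_2=\langle\partial/\partial x^i\rangle$. The normal-form step applied to $\cald_1$ then gives $f^i=f^i(t,x^j,\dot x^j)$ and, applied to $\cald_2$ with the two blocks of indices exchanged, gives $f^\alpha=f^\alpha(t,x^\beta,\dot x^\beta)$, i.e. the decoupled form; conversely, from the decoupled form the distributions $\pi^*\langle\partial/\partial x^\alpha\rangle$ and $\pi^*\langle\partial/\partial x^i\rangle$ are, by the converse just mentioned, a pair of non-trivial complementary parallel $\Phi$-invariant vertical distributions.

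I do not expect a deep obstacle. The two main pieces of work are the (routine but slightly lengthy) coordinate bookkeeping of the normal-form step — where the crucial point is to exploit the \emph{full} parallelism, the horizontal lift $H_0$ of the time direction being precisely what forces $\partial f^i/\partial v^\alpha=0$ — and the construction of the common adapted chart for the two transverse foliations in the decoupling case. One must also carry along the affine, non-autonomous features (the extra coefficient $\Gamma^i_0$, the vanishing of the tension, and the fact that $\Gamma=\Ti^\H$ is horizontal), but, as remarked after~\eqref{homogeneous.sode}, these only simplify the computations; should the normal-form step prove more delicate than sketched, the structure equations~\eqref{brackets}, \eqref{curv.is.Dv.R} and the Bianchi identities~\eqref{Bianchi.identities} supply the identities tying $\Phi$ and the curvature to the $\H$- and $\V$-covariant derivatives of the connection coefficients needed to close it.
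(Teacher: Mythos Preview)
Your proposal is correct and follows essentially the same route as the paper: show that a parallel vertical distribution is basic with integrable base (via Proposition~\ref{characterization.basic} and the torsion-freeness of the \sode connection), pass to adapted coordinates, and then read off the normal form from the vanishing of the relevant connection and Jacobi-endomorphism components. Your account is in fact more explicit than the paper's compressed argument---in particular you spell out how parallelism in the time direction (the coefficient $\Gamma^j_0$) is what upgrades $\partial^2 f^j/\partial v^l\partial v^\alpha=0$ to $\partial f^j/\partial v^\alpha=0$, and you sketch the construction of simultaneously adapted coordinates in the decoupling case where the paper simply asserts their existence.
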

\begin{proof}
If a vertical distribution $\cald$ is parallel then it is basic and the base distribution is integrable. We can find coordinates $(t,x^i,x^\alpha)$ on $M$ such that the annihilator of $\cald$ in $\pi^*\Ver(\tau)$ is generated by $\{dx^\alpha\}$. From $\pai{dx^\alpha}{D_\Gamma(\partial/\partial x^j)}=0$ it follows that $\Gamma^i_{\alpha}=0$, so that $\partial{f^i}/\partial{v^\alpha}=0$. The $\Phi$-invariance condition implies that 
\[
0=\Phi^i_\alpha=-\pd{f^i}{x^\alpha}-\Gamma^i_j\Gamma^j_\alpha-\Gamma^i_\beta\Gamma^\beta_\alpha-\Gamma(\Gamma^i_\alpha)=-\pd{f^i}{x^\alpha}.
\] 
Therefore, in this coordinates $f^i$ depends only on the coordinates $(t,x^j,v^j)$ but not on $(x^\alpha,v^\alpha)$. This proves the first statement. 

For the second, since the distributions are complementary on $\Ver(\tau)$, i.e. $\cald_1\oplus\cald_2=\pi^*(\Ver(\tau))$, we can find coordinates $(t,x^i,x^\alpha)$ on $M$ such that the annihilator of $\cald_1$ is generated by $\{dx^\alpha\}$ and the annihilator of $\cald_2$ is generated by $\{dx^i\}$. In this coordinates we have $\Gamma^i_\alpha=\Gamma^\alpha_i=0$ and $\Phi^i_\alpha=\Phi^\alpha_i=0$  from where it follows that $\partial{f^i}/\partial{v^\alpha}=\partial{f^\alpha}/\partial{v^i}=0$ and $\partial{f^i}/\partial{x^\alpha}=\partial{f^\alpha}/\partial{x^i}=0$. Hence $\Gamma$ decouples.
\end{proof}

The situation where a \sode decouples completely as a system of $n$ independent second-order 1-dimensional differential equations was studied in~\cite{decoupling.nonautonomous}. Generalized submersiveness was studied in~\cite{generalized.submersive}. Further applications to the Inverse Problem of the Lagrangian Mechanics can be found in~\cite{IP.nonautonomous}.

\section{Connections on the cotangent bundle}
\label{cotangent}

The theory developed in this paper allows to linearize connections on any vector bundle. In what respect to Mechanics the most relevant vector bundle is the cotangent bundle to a manifold. We show here how our results specialize to this case and we explore some further properties of the linear connection. In particular it will be shown that the geometry of the linearized connection can be an important tool in Hamiltonian Mechanics.

For the cotangent bundle $\map{\pi_M}{T^*M}{M}$ to a manifold $M$ the fundamental sequence is 
\[
\seq 0->\pi_M^*{T^*M} -\xi^\V-> T{T^*M} -p_M-> \pi_M^*{TM} ->0,
\]
where $p_M=(\tau_{T^*M},T\pi_M)$. Given a nonlinear connection on the cotangent bundle every vector field $U\in\vectorfields{T^*M}$ can be written in a unique way as a sum $U=X^\H+\eta^\V$, for $X$ a vector field along $\pi_M$ and $\alpha$ a differential 1-form along $\pi_M$ (i.e. $X\in\sec{\pi_M^*TM}$ and $\eta\in\sec{\pi_M^*T^*M}$).

In local coordinates $(x^i,p_i)$ on the cotangent bundle, the horizontal lift $H_i$ of a coordinate vector
field $\partial/\partial x^i$ has the expression
\begin{equation}
H_i=\pd{}{x^i}-\Gamma_{ij}(x,p)\pd{}{p_j}.
\end{equation}
The connection coefficients of the linearized connection are 
\begin{equation}
\Gamma^k_{ij}=\pd{\Gamma_{ij}}{p_k},
\end{equation}
and the coordinate expression of the covariant derivative is given by
\begin{equation}
\begin{aligned}
D_{X^\H}\alpha&=X^i[H_i(\alpha_j)+\alpha_k\Gamma^k_{ij}]dx^j
\\
D_{\eta^\V}\alpha&=\eta_i\pd{\alpha_j}{p_i}dx^j.
\end{aligned}
\end{equation}

As in the previous cases we can give an alternative expression of the linearized connection in terms of Lie derivatives provided that we accept some identifications. Let $\alpha$ be a 1-form along $\pi_M$. We can identify $\alpha$ with a $\pi_M$-semibasic 1-form $\bar{\alpha}$. The Lie derivative of $\bar{\alpha}$ with respect to a vertical vector field $V$ is again a $\pi_M$-semibasic 1-form and hence $\call_V\bar{\alpha}$ can be identified with a 1-form along $\pi_M$. If we write the resulting 1-form along $\pi_M$ simply as $\call_V\alpha$ then we have $\kappa(T\alpha(V))=\call_V\alpha$, which can be easily proved in coordinates. Therefore, with the above identifications, the linearized connection has the expression 
\begin{equation}
\label{linearization.cotangent}
D_U\alpha=\kappa([P_\H(U),\alpha^\V])+\call_{P_\V(U)}\alpha.
\end{equation}

The dual connection to $D$ is a connection on $\pi_M^*TM\to T^*M$ that satisfies the compatibility condition~\eqref{natural.connection}. Therefore, considering the connection $\bs{D}$ as the dual connection of $D$ we have that the expressions~\eqref{brackets} and~\eqref{Bianchi.identities} hold with $\bs{D}=D$. In particular,
\begin{equation}
[X^\H,\alpha^\V]=(D_{X^\H}\alpha)^\V-(D_{\alpha^\V}X)^\H,
\end{equation}
for $X$ a vector field along $\pi_M$ and $\alpha$ a 1-form along $\pi_M$. In fact this expression determines the linear connection~$D$, together with its dual. Obviously the lifted connection $\nabla$ on the manifold $T^*M$ can be defined without any auxiliary connection. 

The torsion of the linear connection (i.e. of the dual to the linearized connection) has the expression
\[
T=-\frac{1}{2}(\Gamma_{ij}^k-\Gamma_{ji}^k)dx^i\wedge dx^j\otimes\pd{}{x^k}.
\]

\medskip

For a function $f\in\cinfty{T^*M}$ we will denote by $d^\H f$ the 1-form along $\pi_M$ defined by $\pai{d^\H f}{X}=X^\H(f)$. Similarly we will denote by $d^\V f$ the vector field along $\pi_M$ defined by $\pai{\alpha}{d^\V f}=\alpha^\V(f)$. In terms of this objects the differential of a function $f\in\cinfty{T^*M}$ can be obtained from 
\begin{equation}
\label{decomposition.df}
\pai{df}{X^\H+\alpha^\V}=\pai{d^\H f}{X}+\pai{\alpha}{d^\V f},
\end{equation}
for $X\in\sec{\pi_M^*TM}$ and $\alpha\in\sec{\pi_M^*T^*M}$.

\medskip

\subsection*{Relation with symplectic geometry}

The geometry of the cotangent bundle is dominated by the canonical symplectic structure. We can expect that our results will be related to such structure.

The canonical section on $T^*M$ can be identified with the Liouville 1-form $\cansec_{T^*M}=\theta_0$, by means of  the identification of a $\pi_M$-semibasic 1-form with a section along the projection $\pi_M$. This is clear in coordinates since both have the same coordinate expression $p_idx^i$.

The tension is a 2-covariant tensor along $\pi_M$, $\bs{t}(X,Y)=-\pai{D_{X^\H}\theta_0}{Y}$, which in local coordinates reads
\begin{equation}
\label{Dh.theta}
\bs{t}=-D^\H\theta_0=(\Gamma_{ij}-\Gamma_{ij}^kp_k)dx^i\otimes dx^j.
\end{equation}

Proposition~\ref{homogeneous} can be rephrased as follows.
\begin{proposition}
A connection on the cotangent bundle $\map{\pi_M}{T^*M}{M}$ is homogeneous if and only if $D_{X^\H}\theta_0=0$ for all $X\in\sec{\pi_M^*TM}$. 
\end{proposition}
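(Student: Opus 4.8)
The plan is to observe that this is nothing but the specialization of Proposition~\ref{homogeneous} to the vector bundle $E=T^*M$, and to check that the two statements really say the same thing once the standard identifications are taken into account. First I would recall that, under the identification~\eqref{identification.sections.along} of sections of $\pi_M^*T^*M$ with $\pi_M$-semibasic $1$-forms, the canonical section $\cansec_{T^*M}$ corresponds exactly to the Liouville $1$-form $\theta_0$ (both have coordinate expression $p_i\,dx^i$, as noted just before the statement). Hence, for a horizontal vector field $X^\H$, the object $D_{X^\H}\theta_0$ appearing in the statement is literally the image, under this identification, of $D_{X^\H}\cansec_{T^*M}$ computed from~\eqref{linearization.general}; note that the vertical ($P_\V$) term of~\eqref{linearization.general}/\eqref{linearization.cotangent} does not intervene here because $X^\H$ is horizontal.

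Next I would use the computation already performed in the general setting: since $\cansec^\V=\Delta$, the Liouville (dilation) vector field on $T^*M$, one has
\[
D_{X^\H}\theta_0=\kappa([X^\H,\theta_0^\V])=\kappa([X^\H,\Delta])=-\kappa(\call_\Delta X^\H).
\]
Now Proposition~\ref{homogeneous} applies verbatim: $D_{X^\H}\theta_0=0$ for all $X\in\sec{\pi_M^*TM}$ if and only if $\kappa(\call_\Delta X^\H)=0$ for all such $X$, i.e. if and only if the horizontal distribution $\Hor\subset T(T^*M)$ is invariant under the flow of $\Delta$, which is precisely invariance of $\Hor$ under fibrewise multiplication by positive reals — that is, homogeneity of the nonlinear connection.

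The only point requiring a little care — and this is the main, though mild, obstacle — is to make sure the two identifications in play (sections along $\pi_M$ versus semibasic $1$-forms on the one hand, and the Lie-derivative description $\kappa(T\alpha(V))=\call_V\alpha$ of the vertical part of $D$ on the other) are mutually compatible, so that no spurious term is introduced when one passes from ``$D_{X^\H}\cansec_{T^*M}=0$'' to ``$D_{X^\H}\theta_0=0$''. As observed above this is automatic, because only the horizontal part of $D$ is involved and the semibasic-form identification is by construction the one sending $\cansec_{T^*M}$ to $\theta_0$ and intertwining $\kappa$ with itself. For a reader who prefers an explicit check, one can instead read off the conclusion directly from~\eqref{Dh.theta}: $D^\H\theta_0=-(\Gamma_{ij}-\Gamma_{ij}^kp_k)\,dx^i\otimes dx^j$ vanishes if and only if $\Gamma_{ij}=\Gamma_{ij}^kp_k=(\partial\Gamma_{ij}/\partial p_k)\,p_k$ for all $i,j$, i.e. if and only if each connection coefficient $\Gamma_{ij}$ is homogeneous of degree one in the fibre coordinates, which is exactly the homogeneity of the connection; but the coordinate-free argument is shorter and conceptually cleaner.
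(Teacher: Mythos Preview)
Your proposal is correct and follows exactly the paper's approach: the paper does not give a separate proof but simply introduces the statement with the sentence ``Proposition~\ref{homogeneous} can be rephrased as follows,'' relying on the identification $\cansec_{T^*M}=\theta_0$ made a few lines earlier. Your write-up is more detailed than what the paper provides, but the idea is the same.
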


We consider the canonical symplectic structure $\omega_0=-d\theta_0$ on the cotangent bundle. As it is well known the vertical subbundle is a Lagrangian subbundle for the canonical symplectic form $\omega_0$, so that $\omega_0(\alpha^\V,\beta^\V)=0$ for all $\alpha,\beta\in\sec{\pi_M^*T^*M}$. For a horizontal and a vertical vector fields it is easy to see that
\begin{equation}
\label{omega(H,V)}
\omega_0(X^\H,\alpha^\V)=\pai{\alpha}{X},
\end{equation}
for all $X\in\sec{\pi_M^*TM}$ and all $\alpha\in\sec{\pi_MT^*M}$. 

\begin{definition}
A nonlinear connection on $T^*M$ is said to be symmetric if the horizontal subbundle is a Lagrangian subbundle with respect to $\omega_0$, i.e.\ 
\[
\omega_0(X^\H,Y^\H)=0,\qquad \forall X,Y\in\sec{\pi_M^*TM}.
\]
\end{definition}

This terminology is motivated by the fact that the connection is symmetric if and only if the connection coefficients are symmetric $\Gamma_{ij}(x,p)=\Gamma_{ji}(x,p)$. Thus, $\sigma(X,Y)=\omega_0(X^\H,Y^\H)$ is a 2-form along $\pi_M$ measuring the symmetry of the nonlinear connection, and can be properly called the torsion of the nonlinear connection. It has the local expression $\sigma=\frac{1}{2}(\Gamma_{ij}-\Gamma_{ji})dx^i\wedge dx^j$. The vertical differential of the torsion of the nonlinear connection, up to a sign, is the torsion $T$ of the associated linear connection, 
\[
\pai{T(X,Y)}{\zeta}=-(D_{\zeta^\V}\sigma)(X,Y),\qquad\forall X,Y\in\sec{\pi_M^*TM}.
\]
Notice that for a symmetric nonlinear connection, the tension is a symmetric 2-covariant tensor.

\begin{proposition}
\label{prop.Hamiltonian.vector.field}
For a symmetric connection the Hamiltonian vector field associated to a function $f\in\cinfty{T^*M}$ can be decomposed in the form 
\begin{equation}
\label{Hamiltonian.vector.field}
X_f=(d^\V f)^\H-(d^\H f)^\V.
\end{equation}
Conversely, if this equation holds for every function $f\in\cinfty{T^*M}$, then the connection is symmetric. 
\end{proposition}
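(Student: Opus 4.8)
The plan is to recover the horizontal and vertical components of the Hamiltonian vector field $X_f$ by contracting the canonical symplectic form against vertical vector fields first and against horizontal vector fields afterwards, exploiting that the vertical subbundle is always Lagrangian while the horizontal subbundle is Lagrangian exactly when the connection is symmetric. Here $X_f$ is characterized by $\omega_0(X_f,U)=\pai{df}{U}$ for every $U\in\vectorfields{T^*M}$.

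First I would write $X_f=A^\H+B^\V$ using the unique decomposition of a vector field on $T^*M$, with $A\in\sec{\pi_M^*TM}$ and $B\in\sec{\pi_M^*T^*M}$ to be identified. Pairing $\omega_0(X_f,\cdot)$ against an arbitrary vertical lift $\alpha^\V$ and using that the vertical subbundle is Lagrangian together with~\eqref{omega(H,V)}, one gets $\omega_0(X_f,\alpha^\V)=\omega_0(A^\H,\alpha^\V)=\pai{\alpha}{A}$, whereas by the defining relation of $X_f$ and~\eqref{decomposition.df} one also has $\omega_0(X_f,\alpha^\V)=\pai{df}{\alpha^\V}=\pai{\alpha}{d^\V f}$; since $\alpha$ is arbitrary this forces $A=d^\V f$. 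Next I would pair $\omega_0(X_f,\cdot)$ against an arbitrary horizontal lift $Y^\H$ and now invoke the symmetry hypothesis, so that the horizontal subbundle is Lagrangian: with~\eqref{omega(H,V)} this gives $\omega_0(X_f,Y^\H)=\omega_0(B^\V,Y^\H)=-\omega_0(Y^\H,B^\V)=-\pai{B}{Y}$, while also $\omega_0(X_f,Y^\H)=\pai{df}{Y^\H}=\pai{d^\H f}{Y}$, whence $B=-d^\H f$. Combining the two identifications gives exactly~\eqref{Hamiltonian.vector.field}.

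For the converse I would assume~\eqref{Hamiltonian.vector.field} for every $f\in\cinfty{T^*M}$ and again pair $\omega_0(X_f,\cdot)$ with a horizontal lift $Y^\H$. By~\eqref{omega(H,V)} one has $\omega_0((d^\H f)^\V,Y^\H)=-\omega_0(Y^\H,(d^\H f)^\V)=-\pai{d^\H f}{Y}=-\pai{df}{Y^\H}$, so the assumed decomposition yields $\pai{df}{Y^\H}=\omega_0(X_f,Y^\H)=\omega_0((d^\V f)^\H,Y^\H)+\pai{df}{Y^\H}$, hence $\sigma(d^\V f,Y)=\omega_0((d^\V f)^\H,Y^\H)=0$ for all $f$ and $Y$. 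Since at each point of $T^*M$ the vectors $d^\V f$ span the whole fibre of $\pi_M^*TM$ (for $f=p_i$ one has $d^\V p_i=\partial/\partial x^i$), and $\sigma$ is tensorial in its first argument, it follows that $\sigma\equiv 0$, i.e.\ the horizontal subbundle is Lagrangian and the connection is symmetric.

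None of the steps is long, so there is no serious obstacle; the only points demanding care are the signs, which are fixed by the convention $\omega_0(X_f,\cdot)=df$ together with the skew-symmetry encoded in~\eqref{omega(H,V)}, and the remark in the converse that the family $\{d^\V f\}$ exhausts $\pi_M^*TM$. As a cross-check, a short coordinate computation with $H_i=\partial/\partial x^i-\Gamma_{ij}(x,p)\partial/\partial p_j$ together with the coordinate forms of $(d^\V f)^\H$ and $(d^\H f)^\V$ reveals that the only obstruction to~\eqref{Hamiltonian.vector.field} is the antisymmetric part $\Gamma_{ij}-\Gamma_{ji}$ of the connection coefficients.
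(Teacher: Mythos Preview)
Your proof is correct and follows essentially the same approach as the paper: decompose $X_f$ into horizontal and vertical parts and identify each by pairing $\omega_0(X_f,\cdot)$ against vertical and horizontal lifts, using that the vertical subbundle is always Lagrangian and the horizontal one is Lagrangian precisely in the symmetric case. The only cosmetic difference is in the converse, where the paper realizes an arbitrary basic vector field $X\in\vectorfields{M}$ as $d^\V f$ by taking the fibrewise-linear function $f=\pai{\theta_0}{X}$, while you achieve the same spanning conclusion pointwise via the coordinate momenta $p_i$.
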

\begin{proof}
The vector field $X_f$ is the solution of the equation $i_{X_f}\omega_0=df$. We can write $X_f=Z^\H+\eta^\V$ for uniquely defined $Z\in\sec{\pi_M^*TM}$ and $\eta\in\sec{\pi_M^*T^*M}$. For any $Y\in\sec{\pi_M^*TM}$, using equation~\eqref{omega(H,V)} and taking into account that the horizontal subbundle is Lagrangian we get
\[
\omega_0(X_f,Y^\H)=\omega_0(\eta^\V,Y^\H)=-\pai{\eta}{Y}.
\]
On the other hand, from the definition of $d^\H f$ we have
\[
\pai{df}{Y^\H}=Y^\H f=\pai{d^\H f}{Y},
\] 
from where we get $\eta=-d^\H f$. 

Similarly, as the vertical subbundle is also Lagrangian, for every $\sigma\in\sec{\pi_M^*T^*M}$ we have
\[
\omega_0(X_f,\sigma^\V)=\omega_0(Z^\H,\sigma^\V)=\pai{\sigma}{Z},
\]
and on the other hand
\[
\pai{df}{\sigma^\V}=\sigma^\V f=\pai{\sigma}{d^\V f},
\] 
from where we get $Z=d^\V f$. We conclude that $X_f=(d^\V f)^\H-(d^\H f)^\V$.

Conversely, for every $X\in\vectorfields{M}$ we consider the function $f=\pai{\theta_0}{X}\in\cinfty{TM}$, which satisfies $d^\V f=X$. Then for every $Y\in\vectorfields{M}$ we have
\begin{align*}
\omega_0(X^\H,Y^\H)
&=\omega_0((d^\V f)^\H,Y^\H)\\
&=\omega_0(X_f-(d^\H f)^\V,Y^\H)\\
&=\omega_0(X_f,Y^\H)-\omega_0((d^\H f)^\V,Y^\H)\\
&=\pai{df}{Y^\H}-\pai{d^\H f}{Y}\\
&=0,
\end{align*}
showing that the horizontal distribution is Lagrangian.
\end{proof}

\begin{proposition}
\label{prop.poisson.bracket}
For a symmetric connection on the cotangent bundle, the Poisson bracket of two functions $f,g\in\cinfty{T^*M}$ can be written in the form  
\begin{equation}
\label{poisson.bracket}
\{f,g\}=\pai{d^\H f}{d^\V g}-\pai{d^\H g}{d^\V f}.
\end{equation}
Conversely, if this equation holds for all $f,g\in\cinfty{T^*M}$, then
the connection is symmetric.
\end{proposition}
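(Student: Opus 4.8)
The plan is to get the forward direction straight out of Proposition~\ref{prop.Hamiltonian.vector.field}. With the convention $i_{X_f}\omega_0=df$ one has $\{f,g\}=\omega_0(X_f,X_g)=\pai{df}{X_g}=X_g(f)$; for a symmetric connection Proposition~\ref{prop.Hamiltonian.vector.field} gives $X_g=(d^\V g)^\H-(d^\H g)^\V$, so substituting this decomposition of $X_g$ into the splitting~\eqref{decomposition.df} of $df$ (with $X=d^\V g$ and $\alpha=-d^\H g$) immediately produces
\[
\{f,g\}=\pai{d^\H f}{d^\V g}-\pai{d^\H g}{d^\V f}.
\]
This step is essentially a one-line computation once the sign conventions are pinned down.

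For the converse I would first note that the horizontal part of any Hamiltonian vector field is $d^\V g$ regardless of symmetry: writing $X_g=Z^\H+\eta^\V$ and pairing $i_{X_g}\omega_0=dg$ with an arbitrary vertical lift $\beta^\V$, the facts that the vertical subbundle is Lagrangian and that $\omega_0(X^\H,\beta^\V)=\pai{\beta}{X}$ (equation~\eqref{omega(H,V)}) force $Z=d^\V g$. Hence $X_g=(d^\V g)^\H+\eta^\V$ for some $\eta\in\sec{\pi_M^*T^*M}$, and~\eqref{decomposition.df} gives $\{f,g\}=\pai{d^\H f}{d^\V g}+\pai{\eta}{d^\V f}$. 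Comparing this with the assumed identity~\eqref{poisson.bracket}, valid for all $f,g$, forces $\pai{\eta}{d^\V f}=-\pai{d^\H g}{d^\V f}$ for every $f\in\cinfty{T^*M}$; taking $f=\pai{\theta_0}{X}$ with $X\in\vectorfields{M}$, so that $d^\V f=X$ as in the proof of Proposition~\ref{prop.Hamiltonian.vector.field}, and using that the vectors $X(\pi_M(a))$ exhaust the fibre of $\pi_M^*TM$ over each $a\in T^*M$, I would conclude $\eta=-d^\H g$. Thus $X_g=(d^\V g)^\H-(d^\H g)^\V$ for every $g\in\cinfty{T^*M}$, and the converse half of Proposition~\ref{prop.Hamiltonian.vector.field} yields symmetry of the connection.

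The only real obstacle here is bookkeeping: one has to keep the sign conventions in $\omega_0=-d\theta_0$, $i_{X_f}\omega_0=df$, the definition of the Poisson bracket and the pairing~\eqref{decomposition.df} mutually consistent, since a single slip changes the sign of the claimed formula. A safer, coordinate-based alternative for the converse is to test~\eqref{poisson.bracket} on the momentum functions $f=\pai{\theta_0}{X}$, $g=\pai{\theta_0}{Y}$: the left-hand side equals $-\pai{\theta_0}{[X,Y]}$, the right-hand side exceeds it by $(\Gamma_{ij}-\Gamma_{ji})X^iY^j$, and the vanishing of this expression for all $X,Y$ is precisely the condition $\Gamma_{ij}=\Gamma_{ji}$ characterizing symmetric connections.
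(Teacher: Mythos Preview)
Your proof is correct and follows essentially the same route as the paper: both directions go through Proposition~\ref{prop.Hamiltonian.vector.field} together with the splitting~\eqref{decomposition.df}. The paper's converse is marginally slicker---it observes directly that the right-hand side of~\eqref{poisson.bracket} equals $\pai{df}{(d^\V g)^\H-(d^\H g)^\V}$ by~\eqref{decomposition.df}, whence $X_g=(d^\V g)^\H-(d^\H g)^\V$ without first isolating the horizontal part---but your decomposition argument and the coordinate alternative are equally valid.
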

\begin{proof}
For a symmetric connection we have 
\begin{align*}
\{f,g\}
&=\omega_0(X_f,X_g)\\
&=\omega_0((d^\V f)^\H-(d^\H f)^\V,(d^\V g)^\H-(d^\H g)^\V)\\
&=\omega_0((d^\V g)^\H,(d^\H f)^\V)-\omega_0((d^\V f)^\H,(d^\H g)^\V)\\
&=\pai{d^\H f}{d^\V g}-\pai{d^\H g}{d^\V f}.
\end{align*}

Conversely, if~\eqref{poisson.bracket} holds for every $f,g\in\cinfty{T^*M}$ then
\begin{align*}
\pai{df}{X_g}
&=\{f,g\}\\
&=\pai{d^\H f}{d^\V g}-\pai{d^\H g}{d^\V f}\\
&=\pai{df}{(d^\V g)^\H-(d^\H g)^\V)}.
\end{align*}
As this expression holds for every function $f$ we have that $X_g=(d^\V g)^\H-(d^\H g)^\V$ for every function $g$, and Proposition~\ref{prop.Hamiltonian.vector.field} implies that the connection is symmetric.
\end{proof}

We finally mention the following two simple properties valid for a symmetric nonlinear connection:
\begin{itemize}
\item The curvature satisfies the following identity
\[
\sum\pai{R(X,Y)}{Z}=0,
\]
where the sum is over cyclic permutations of $X,Y,Z\in\sec{\pi_M^*TM}$. Indeed, computing the expression of $d\omega_0(X^\H,Y^\H,Z^\H)=0$, for $X,Y,Z\in\vectorfields{M}$, by Cartan formula for the exterior differential we have
\begin{align*}
0=d\omega_0(X^\H,Y^\H,Z^\H)
&=\sum X^\H\omega_0(Y^\H,Z^\H)-\omega_0([X^\H,Y^\H],Z^\H)\\
&=-\sum \omega_0([X,Y]^\H,Z^\H)-\omega_0(R(X,Y)^\V,Z^\H)\\
&=\sum\pai{R(X,Y)}{Z}.
\end{align*}
\item The lifted linear connection $\nabla$ on the manifold $T^*M$ is a symplectic connection, $\nabla\omega_0=0$. This fact is easy to see in coordinates by taking into account that the symplectic form can be written $\omega_0=H^i\wedge V_i$, where $H^i=dx^i$ and $V_i=dp_i+\Gamma_{ji}dx^j$ are the elements of the dual basis of $\{H_i,V^i\}$.
\end{itemize}

\subsection*{Completely integrable systems and Hamilton-Jacobi theory}

The linearization of a connection on the cotangent bundle can be used to obtain information on some problems in Hamiltonian Mechanics. We will show here a very simple application in studying properties of completely integrable systems.

Consider a completely integrable Hamiltonian system on $T^*M$ with Hamiltonian function $H\in\cinfty{T^*M}$. Let $\{f_1,\ldots,f_n\}$ functionally independent constants of the motion for $H$ which are in involution. The Hamiltonian $H$ is functionally dependent of such functions, usually taken the first of them. We will moreover assume the following natural transversality condition $d^\V f_1\wedge \cdots \wedge d^\V f_n\neq0$. In coordinates this corresponds to the regularity of the matrix $[\partial f_i/\partial p_j]$ at every point in $T^*M$. The Hamiltonian vector fields $\{X_{f_1},\ldots,X_{f_n}\}$ span an $n$-dimensional distribution, which is horizontal in view of the imposed transversality condition, and hence a nonlinear connection on $T^*M$. In equivalent terms, $\Hor$ is the annihilator of the linear span of $\{df_1,\ldots,df_n\}$, so that $d^\H f_i=0$.  On the other hand the functions $\{f_1,\ldots,f_n\}$ are in involution, i.e. satisfy $\{f_i,f_j\}=0$, and hence Proposition~\ref{prop.poisson.bracket} implies that $\Hor$ is an integrable Lagrangian distribution. Thus $\Hor$ is a flat symmetric nonlinear connection on~$T^*M$.

\medskip

From a dual point of view, a solution of the Hamilton-Jacobi equation for $H$ is a closed 1-form $\alpha\in\sec{T^*M}$ such that $H$ is constant on the image of $\alpha$, i.e. $H(x,\alpha(x))=c$ (constant). Locally $\alpha=dS$ for some function $S$ and we get the standard form of the Hamilton-Jacobi equation $H(x,\pd{S}{x})=c$. 

\begin{proposition}
\label{HJ.integral.section}
Let $\Hor$ be a symmetric nonlinear connection on $T^*M$ such that $d^\H H=0$. Every integral section of $\Hor$ is a  solution of the Hamilton-Jacobi equation.
\end{proposition}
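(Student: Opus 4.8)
The plan is to check directly the two conditions that define a solution of the Hamilton--Jacobi equation for an integral section $\alpha\in\sec{T^*M}$ of $\Hor$: first that $\alpha$, as a $1$-form on $M$, is closed, and second that $H$ is constant on the image of $\alpha$. The closedness will come from the symmetry (Lagrangian) hypothesis combined with the tautological property of the Liouville form, while the level-set condition will be an immediate consequence of the hypothesis $d^\H H=0$ together with the decomposition~\eqref{decomposition.df}.

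For closedness, the key observation is that, when a $1$-form $\alpha$ on $M$ is regarded as a section of $\pi_M$, one has $\alpha^*\theta_0=\alpha$ (the defining property of the Liouville form; in coordinates, if $\alpha=\alpha_i(x)\,dx^i$ then the section reads $p_i=\alpha_i(x)$, and $\theta_0=p_i\,dx^i$ pulls back to $\alpha_i(x)\,dx^i$). On the other hand, since $\alpha$ is an integral section of $\Hor$, for $v,w\in T_mM$ the vectors $T\alpha(v),T\alpha(w)$ lie in $\Hor_{\alpha(m)}$, and the connection being symmetric means precisely that the horizontal subbundle is Lagrangian for $\omega_0$; hence $(\alpha^*\omega_0)(v,w)=\omega_0(T\alpha(v),T\alpha(w))=0$, i.e.\ $\alpha^*\omega_0=0$. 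Since $\omega_0=-d\theta_0$, this gives $d\alpha=d(\alpha^*\theta_0)=\alpha^*(d\theta_0)=-\alpha^*\omega_0=0$, so $\alpha$ is closed.

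For the constancy of $H$ along $\Im\alpha$, I take an arbitrary $X\in\vectorfields{M}$ and use again that $\alpha$ is an integral section, so $T\alpha\circ X=X^\H\circ\alpha$. Then $X(H\circ\alpha)=\pai{d(H\circ\alpha)}{X}=\pai{dH}{T\alpha\circ X}=(X^\H H)\circ\alpha=\pai{d^\H H}{X}\circ\alpha$, which vanishes because $d^\H H=0$ (here $X$ on the right is read as the basic section $X\circ\pi_M$, and $\pai{d^\H H}{X}$ is the zero function on $T^*M$). As $X$ is arbitrary, $H\circ\alpha$ is locally constant, so $H$ is constant on the image of $\alpha$; together with the previous step this says that $\alpha$ solves the Hamilton--Jacobi equation.

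The computations are short, and the genuine content is conceptual rather than technical: the only point needing care is the bookkeeping with the identifications in play---a $1$-form on $M$ viewed as a section of $\pi_M$, the form $d^\H H$ along $\pi_M$ evaluated along $\alpha$, and the fact that ``integral section'' means $T\alpha$ takes values exactly in $\Hor$. Once this is in place the symmetry hypothesis feeds into closedness via $\alpha^*\theta_0=\alpha$, and $d^\H H=0$ feeds into the level-set condition; note that no curvature or integrability hypothesis on $\Hor$ enters the proof itself, integrability being what guarantees the existence of integral sections, which is implicit in the phrase ``every integral section''.
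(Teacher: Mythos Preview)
Your proof is correct and follows essentially the same approach as the paper: you verify closedness of $\alpha$ from the Lagrangian condition on $\Hor$ via $\alpha^*\omega_0=0$ and $\alpha^*\theta_0=\alpha$, and you obtain constancy of $H\circ\alpha$ from $d^\H H=0$ using that $T\alpha$ lands in the horizontal distribution. The only cosmetic differences are the order of the two steps and that you spell out the tautological identity $\alpha^*\theta_0=\alpha$ explicitly, whereas the paper packages the same fact as $\alpha^*\omega_0=-d\alpha$.
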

\begin{proof}
Let $\alpha$ be an integral section of $\Hor$. For every vector $v\in TM$, if $m=\tau_M(v)$ we have 
\[
\pai{d(H\circ\alpha)}{v}=v(H\circ\alpha)=T\alpha(v)H=\xi^\H(\alpha(m),v)H=\pai{d^\H H(\alpha(m))}{v}=0.
\]
Therefore $H\circ\alpha$ is locally constant. 

Moreover $\alpha$ is closed because the connection is symmetric. Indeed, for every $m\in M$ and every $v,w\in T_mM$ we have
\[
(\alpha^*\omega_0)_m(v,w)=\omega_0(T\alpha(v),T\alpha(w))
=\omega_0\bigl(\xi^\H(\alpha(m),v),\xi^\H(\alpha(m),w)\bigr)=0
\]
because $\Hor$ is Lagrangian. Therefore $\alpha^*\omega_0=-\pi_M^*(d\alpha)=0$ and hence $\alpha$ is closed.
\end{proof}

Notice that for a completely integrable system, in the above proposition we can interchange $H$ with any other of the functions $f_i$, which also satisfies $d^\H f_i=0$. 

\smallskip

A complete solution of the Hamilton-Jacobi equation is a foliation of $T^*M$ where every leaf is the image of a solution of the Hamilton-Jacobi equation. As it is well known if the Hamilton-Jacobi equation admits a complete integral then $H$ is completely integrable. The relation between the complete integral and the family of first integrals in involution $\{f_1,\ldots,f_n\}$ is as follows. Parameterizing (perhaps locally) the complete integral so that $\{\alpha_\lambda\}_{\lambda\in\Lambda}$, $\Lambda\subset\Real^n$, is a family of closed 1-forms we get a (local) diffeomorphism $\map{\psi}{M\times\Lambda}{T^*M}$. The second component of the inverse $\map{\psi^{-1}}{T^*M}{M\times\Real^n}$ defines a function $\map{F=(f_1,\ldots,f_n)}{T^*M}{\Real^n}$ whose components are in involution. By construction we have that $\Im\alpha_\lambda=F^{-1}(\lambda)$. See~\cite{HJ1,HJ2} for the details.

\medskip

We will next analyze the simplest possible situation, in which the horizontal distribution associated to a completely integrable geodesic Hamiltonian system defines (by linearization) a flat linear connection.

Let $g$ be a pseudo-Riemannian metric on $M$. We consider the inverse metric $G=g^{-1}$ on $T^*M$ and the associated Hamiltonian 
\[
H(p)=\frac{1}{2}G_{\pi_M(p)}(p,p),\qquad p\in T^*M.
\]
$H$ is said to be a geodesic Hamiltonian because the solutions of Hamilton equations, when projected to $M$, are the geodesics of the metric $g$. In local coordinates
\[
H(x^i,p_i)=\frac{1}{2}g^{ij}(x^k)p_ip_j.
\]

\begin{theorem}
Let $H$ be a completely integrable geodesic Hamiltonian with metric~$g$. If the associated linearized connection is flat then:
\begin{itemize}
\item The nonlinear connection is in fact linear and equal to the dual of the Levi-Civita connection $\nabla$ for the metric $g$.
\item The Hamilton-Jacobi equation is separable in any $\nabla$-affine coordinate system. 
\item In any $\nabla$-affine coordinate system all coordinates are ignorable.
\end{itemize}
\end{theorem}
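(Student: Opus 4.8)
The plan is to play the structural consequences of flatness of the linearized connection $D$ against the very rigid shape of a geodesic Hamiltonian. Recall that, as established above, the nonlinear connection $\Hor$ spanned by the Hamiltonian vector fields of the commuting first integrals is symmetric, has $R=0$, and satisfies $d^\H H=0$ (since $H$ is functionally dependent on the $f_i$, whose horizontal differentials vanish). Assuming $\curv=0$, I would first extract three facts. Because the $\scriptstyle\mathsf{VH}$-tensor $\theta$ then vanishes, the first identity of Proposition~\ref{derivatives.of.the.tension} gives $D^\V_\zeta\bs{t}=\theta(\cansec,\zeta)(\,\cdot\,)=0$, so the tension $\bs{t}$ is basic. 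Because the $\scriptstyle\mathsf{VH}$-component of $\curv$ vanishes, Proposition~\ref{interpretation.VH} produces a linear connection $\nabla$ on $\map{\pi_M}{T^*M}{M}$ with $D=\pi_M^*\nabla$, whose curvature is $\curv^\nabla(X,Y)\sigma=\curv(X^\H,Y^\H)\sigma=0$, so $\nabla$ is flat (as noted in the remark after Proposition~\ref{interpretation.VH}, this is exactly the statement that $\Hor$ is an affine connection). Finally, since $\Hor$ is symmetric the coefficients satisfy $\Gamma_{ij}=\Gamma_{ji}$, so $\Gamma^k_{ij}=\partial\Gamma_{ij}/\partial p_k$ is symmetric in $i,j$ and the associated linear connection on the manifold is torsion-free. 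A flat torsion-free linear connection admits affine local coordinates, i.e.\ coordinates $(x^i)$ in which all the $\Gamma^k_{ij}$ vanish identically.

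Next I would work in such a $\nabla$-affine chart. There $\partial\Gamma_{ij}/\partial p_k=0$, so $\Gamma_{ij}=\Gamma_{ij}(x)$ depends only on the base and coincides with the basic tension, $\Gamma_{ij}(x)=\bs{t}_{ij}(x)$. Plugging the geodesic form $H=\frac{1}{2}g^{il}(x)p_ip_l$ into $d^\H H=0$, the equation $H_j(H)=\partial H/\partial x^j-\Gamma_{jk}(x)\,\partial H/\partial p_k=0$ becomes the polynomial identity $\frac{1}{2}(\partial_j g^{il})\,p_ip_l-\Gamma_{jk}(x)g^{kl}p_l=0$ in the momenta: its quadratic part forces $\partial_j g^{il}=0$, so $g^{ij}$ is constant and the metric is flat in this chart, and its linear part forces $\Gamma_{jk}g^{kl}=0$, whence $\Gamma_{jk}=0$ by nondegeneracy of $g$. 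Thus every nonlinear connection coefficient vanishes, so $\Hor$ is the trivial flat linear connection, which — $g$ being constant in these coordinates — is exactly the dual of the Levi-Civita connection; this proves the first item. For the other two, I would note that any two $\nabla$-affine charts are related by an affine coordinate change, under which the constant matrix $[g^{ij}]$ stays constant, so in every $\nabla$-affine chart $H=\frac{1}{2}g^{ij}p_ip_j$ with constant coefficients. Then $H$ contains no $x^i$, so every coordinate is ignorable, and the Hamilton--Jacobi equation $\frac{1}{2}g^{ij}\,\partial_iS\,\partial_jS=c$ has the additively separated complete integral $S=\sum_i a_ix^i$ with constants $a_i$ subject to $\frac{1}{2}g^{ij}a_ia_j=c$, so it is separable in any $\nabla$-affine coordinate system.

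The main obstacle is the first step: correctly chaining Propositions~\ref{interpretation.VH} and~\ref{derivatives.of.the.tension} so as to conclude that $\Hor$ is affine with basic tension and that the defining linear connection $\nabla$ is flat and torsion-free, which is what makes $\nabla$-affine coordinates available. Once that normal form is secured, everything reduces to the short homogeneity-degree argument in the momenta; the one point to watch there is that $d^\H H=0$ genuinely becomes a polynomial identity in $p$, which is precisely where the geodesic hypothesis — and the $p$-independence of $\Gamma_{ij}$ in the affine chart — enter. The remaining computations are routine.
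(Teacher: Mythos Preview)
Your proof is correct and follows essentially the same route as the paper's: use flatness of $D$ together with Proposition~\ref{interpretation.VH} to descend to a flat, torsion-free linear connection $\nabla$ on $T^*M$, pass to $\nabla$-affine coordinates where the nonlinear coefficients $\Gamma_{ij}$ become basic, and then exploit $d^\H H=0$ together with the quadratic-in-$p$ form of the geodesic Hamiltonian to force $\Gamma_{ij}=0$ and $g^{ij}$ constant. The only cosmetic difference is that the paper differentiates $d^\H H=0$ once in $p_k$ and then compares degrees (linear versus basic), whereas you compare degrees (quadratic versus linear) directly in the undifferentiated identity; these are equivalent. Your invocation of Proposition~\ref{derivatives.of.the.tension} to get basicness of the tension is a correct side observation but not actually needed for the argument.
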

\begin{proof}
Let $\Hor$ be the nonlinear connection defined by the complete integrable system and let $D$ be the linearized connection. If $D$ is flat there exists a flat linear connection $\nabla$ on $T^*M$ such that $D$ is the pullback of $\nabla$. Moreover $\nabla$ is symmetric because $D$ is symmetric. It follows that $M$ is an affine manifold.

In a system of $\nabla$-affine coordinates $(x^i)$ in $M$ the coefficients of the nonlinear connection are basic local functions, $\Gamma_{ij}=\Gamma_{ij}(x)$. On the other hand, as $dH$ is linearly dependent on $\{df_1,\ldots,df_n\}$ we have that $d^\H H=0$. In the above coordinates this expression reads $ \pd{H}{x^i}-\Gamma_{ij}(x)\pd{H}{p_j}=0$. Taking the derivative with respect to $p_k$ of this expression we get
\[
\tag{$*$}
\pd{^2H}{x^i\partial p_k}=\Gamma_{ij}g^{jk}.
\] 
The left hand side of this expression is linear in momenta while the right hand side is a basic function, so that both must vanish. The right hand side is $\Gamma_{ij}g^{jk}=0$ and hence $\Gamma_{ij}=0$. Therefore the connection $\Hor$ and the connection $\nabla$ (as a nonlinear connection) both have null connection coefficients in the same coordinates, so that they are equal. From the vanishing of the left hand side of \thetag{$*$} we have $\partial g^{jk}/\partial x^i=0$ and hence $\nabla G=0$. Therefore $\nabla$ is a symmetric and isometric connection and hence the dual of the Levi-Civita connection for $g$. This proves the first statement.

Let $\alpha$ be any integral section of $\Hor$. In $\nabla$-affine coordinates $\partial\alpha_i/\partial x^j=0$ because the connection coefficients vanishes. Therefore the general solution of the equation of integral sections is $\alpha_\lambda=\sum_i\lambda_i dx^i$ for $\lambda=(\lambda_1,\ldots,\lambda_n)\in\Real^n$. As a consequence of Proposition~\ref{HJ.integral.section} we have found a complete solution of the Hamilton-Jacobi equation in separated form ($S_\lambda=\sum_i\lambda_i x^i$ with $\alpha_\lambda=dS_\lambda$). 

Finally, in any $\nabla$-affine coordinate system the metric coefficients are constant, $g^{ij}(x)=a^{ij}$, so that $H=\frac{1}{2}a^{ij}p_ip_j$ does not depend on the coordinates $x^i$. Thus all them are ignorable coordinates. 
\end{proof}

In the above proof, to simplify the arguments, we have used some coordinate calculations. Let us indicate how this calculations can be done intrinsically. The first step in the proof shows that $D$ is the pullback of a flat symmetric connection~$\nabla$. From the equation $d^\H H=0$ taking covariant derivatives with respect to a vertical vector field and using that the linear connection is flat we can easily get $D_{X^\H} (d^\V H)=0$. Taking again covariant derivatives with respect to a vertical vector field we arrive at $D_{X^\H} G=0$, where one has to use that the linear connection is flat and the equality $D_{\alpha^\V}D_{\beta^\V}H=G(\alpha,\beta)$ for every $\alpha,\beta\in\sec{T^*M}$. Since $G$ is a basic tensor and $D$ is the pullback of $\nabla$ one has $\nabla G=0$ and hence $\nabla$ is the dual of the Levi-Civita connection for the metric $g$.

\section{Conclusions and outlook}
In this paper we have extended to general vector bundles the procedure of linearization of a nonlinear connection following the ideas in~\cite{linearizable.SODE}, for a tangent bundle, and~\cite{Bianchi}, for the case of a first jet bundle. We have shown that the construction of the linearized connection in the affine case can be considered as a particular case of the linearization on a vector bundle of the homogenization of the connection.  

We have given an interpretation of the parallel rule defined by such connection, and we have studied the curvature of the linear connection. In particular a convenient expression of the Bianchi identities is provided.

As a relevant example, we have studied nonlinear connections on the cotangent bundle and we have shown that our results can be very useful in studying problems in Hamiltonian Mechanics. The simple problem that we have considered shows that further research is needed to understand problems related to completely integrable systems, as it is for instance the separability of the Hamilton-Jacobi equation.

\smallskip

Finally, in~\cite{con.affine.MeSaMa,con.affine.MeSa} a more general notion of connection on Lie algebroids and more generally on anchored vector and affine bundles was considered. Let us briefly indicate how our theory can be extended also to this case. If $\map{\pi}{E}{M}$ is a vector bundle and $(\map{\tau}{A}{M},\rho)$ is an anchored vector bundle, we just have to take the correct `tangent space' $\calt^A_bE=\set{(a,V)\in A_{\pi(b)}\times T_bE}{\rho(a)=T\pi(V)}$ at every point $b\in E$ and generalize our setting by considering the exact sequence $
\seq 0-> \pi^*E -\xi^\V-> \calt^AE -\mathsf{p}-> \pi^*A ->0$ of vector bundles over $E$. Here the map $\map{\mathsf{p}}{\calt^AE}{\tau^*A}$ is $\mathsf{p}(a,V_b)=(b,a)$ and the vertical lifting is considered as a map $\map{\xi^\V}{\pi^*E}{\calt^AE}$ given by $\xi^\V(b,c)=(0_{\pi(b)},\frac{d}{dt}(b+tc)|_{t=0})$. See~\cite{jetoids.variational, jetoids.multisymplectic} for more information on these spaces and maps. A generalized $A$-connection on $E$ is just a splitting of this sequence. The details of this constructions will be given elsewhere.


\let\sep\newline
\def\sep{,\ \ignorespaces}


\end{document}